\theoremstyle{plain}
\newtheorem{teor}{Theorem}
\numberwithin{teor}{section}
\numberwithin{equation}{section}
\theoremstyle{definition}
\newaliascnt{defi}{teor}
\newtheorem{defi}[defi]{Definition}
\theoremstyle{plain}
\newaliascnt{lemma}{teor}%
\newtheorem{lemma}[lemma]{Lemma}
\theoremstyle{plain}
\newaliascnt{prop}{teor}%
\newtheorem{prop}[prop]{Proposition}
\theoremstyle{plain}
\newaliascnt{cor}{teor}%
\newtheorem{cor}[cor]{Corollary}
\theoremstyle{definition}
\newaliascnt{ex}{teor}%
\theoremstyle{definition}
\newaliascnt{oss}{teor}%
\newtheorem{oss}[oss]{Remark}
\theoremstyle{plain}
\DeclarePairedDelimiter{\abs}{\lvert}{\rvert}
\DeclarePairedDelimiter{\norma}{\lVert}{\rVert}
\DeclareMathOperator{\seg}{seg}
\DeclareMathOperator{\sn}{sn}
\DeclareMathOperator{\cn}{cn}
\DeclareMathOperator{\arcsinh}{arcsinh}
\DeclareMathOperator{\Lip}{Lip}
\DeclareMathOperator{\Mink}{Mink}
\newcommand{\R}{\mathbb{R}}
\newcommand{\Sp}{\mathbb{S}}
\newcommand{\Hn}{\mathcal{H}^{n-1}}
\newcommand{\ka}{\kappa}
\newcommand{\eps}{\varepsilon}
\DeclareMathOperator{\cut}{Cut}
\newcommand{\leqnomode}{\tagsleft@true\let\veqno\@@leqno}
\newcommand{\reqnomode}{\tagsleft@false\let\veqno\@@eqno}
\newcommand{\scalar}[2]{\langle #1,#2 \rangle }
\newcommand\restr[2]{{%
  \left.\kern-\nulldelimiterspace %
  #1 %
  \vphantom{\big|} %
  \right|_{#2} %
  }}
\newcommand{\refcheckize}[1]{%
  \expandafter\let\csname @@\string#1\endcsname#1%
  \expandafter\DeclareRobustCommand\csname relax\string#1\endcsname[1]{%
    \csname @@\string#1\endcsname{##1}\wrtusdrf{##1}}%
  \expandafter\let\expandafter#1\csname relax\string#1\endcsname
}
\definecolor{paolo}{rgb}{0.09, 0.45, 0.27}
\DeclareMathOperator{\id}{Id}
\title{A spectral isoperimetric inequality on the $n$-sphere for the Robin-Laplacian with negative boundary parameter}
\author{P. Acampora, A. Celentano, E. Cristoforoni, C. Nitsch, C. Trombetti}
\date{}
\newcommand{\Addresses}{{%
 \bigskip 
 \footnotesize 
 
 \textsc{Dipartimento di Matematica e Applicazioni ``R. Caccioppoli'', Universit\`a degli studi di Napoli Federico II, Via Cintia, Complesso Universitario Monte S. Angelo, 80126 Napoli, Italy.}\par\nopagebreak 
 
 \medskip 
 
 \textit{E-mail address}, P.~Acampora: \texttt{paolo.acampora@unina.it} 

 \medskip

 \textit{E-mail address}, A.~Celentano: 
 \texttt{antonio.celentano2@unina.it}
  
 \medskip 
 
 \textit{E-mail address}, C.~Nitsch: \texttt{c.nitsch@unina.it}

  \medskip 
 
 \textit{E-mail address}, C.~Trombetti: \texttt{cristina@unina.it} 
 
 \medskip 
 
\textsc{Mathematical and Physical Sciences for Advanced Materials and Technologies, Scuola Superiore Meridionale, Largo San Marcellino 10, 80126, Napoli, Italy.}\par\nopagebreak 
 
 \medskip 
 
 \textit{E-mail address}, E.~Cristoforoni: \texttt{emanuele.cristoforoni@unina.it} 
}}
\begin{document}
\renewcommand*{\sectionautorefname}{Section}
\renewcommand*{\subsectionautorefname}{Subsection}

\maketitle
\begin{abstract}
For every given $\beta<0$, we study the problem of maximizing the first Robin eigenvalue of the Laplacian $\lambda_\beta(\Omega)$ among convex (not necessarily smooth) sets $\Omega\subset\mathbb{S}^{n}$ with fixed perimeter. In particular, denoting by $\sigma_n$ the perimeter of the $n$-dimensional hemisphere, we show that for fixed perimeters $P<\sigma_n$, geodesic balls maximize the eigenvalue. Moreover, we prove a quantitative stability result for this isoperimetric inequality in terms of volume difference between $\Omega$ and the ball $D$ of the same perimeter. 

\textsc{Keywords:} Robin Laplacian, negative boundary parameter, isoperimetric inequalities for eigenvalues, curvature measures, convex sets

\textsc{MSC 2020:} 35P15, 58J50, 52A55
\end{abstract}
\section{Introduction}

Let $(M,g)$ be a complete Riemannian $n$-manifold, and let $\Omega\subset M$ be a bounded open set with smooth boundary. For every $\beta\in\R$ consider the Robin-Laplacian eigenvalue problem on $\Omega$, that is
\begin{equation}\label{robinproblem}\begin{cases}
    -\Delta u=\lambda u &\text{in }\Omega,\\[5 pt]
    \dfrac{\partial u}{\partial \nu}+\beta u=0 &\text{on }\partial\Omega,
\end{cases}\end{equation}
where $\Delta$ is the Laplace-Beltrami operator on $M$ and $\nu$ is the unit outer normal to the boundary of $\Omega$. \eqref{robinproblem} admits an increasing sequence of eigenvalues diverging to infinity. Moreover, if $\Omega$ is connected, any first eigenfunction has a sign, so that, by linearity, the first eigenvalue $\lambda_\beta(\Omega)$ is simple (see for example \cite{LW20}).

Let $\lambda_\beta(\Omega)$ be the smallest eigenvalue for \eqref{robinproblem}, then the following variational characterization holds 
\begin{equation}\label{robin}\lambda_\beta(\Omega)=\inf_{v\in H^1(\Omega)} \dfrac{\displaystyle \int_\Omega \abs{\nabla v}^2\,d\mu+\beta\int_{\partial\Omega}v^2\,d\Hn}{\displaystyle\int_\Omega v^2\,d\mu}.\end{equation}
Any minimizer $u$ of \eqref{robin} is a weak solution to \eqref{robinproblem} for $\lambda=\lambda_\beta(\Omega)$, that is
\begin{equation*}
\int_\Omega g(\nabla u,\nabla \varphi)\,d\mu+\beta\int_{\partial\Omega}u\varphi\,d\Hn=\lambda_\beta(\Omega)\int_\Omega u\varphi\,d\mu,
\end{equation*}
for every $\varphi\in H^1(\Omega)$.
An immediate consequence of the variational characterization \eqref{robin} is the fact that the function 
\[\beta\in\R\mapsto\lambda_\beta(\Omega)\in\R\]
is increasing. In particular, for $\beta=0$, the Robin boundary condition coincides with the Neumann one and $\lambda_0(\Omega)=0$ with constant eigenfunctions. Therefore, the first Robin eigenvalue is positive for $\beta>0$ and negative for $\beta<0$. 

\medskip
Comparison theorems for the first Robin eigenvalue are widely studied in the literature. The first example of such theorems is probably the one due to Bossel in \cite{B86}: this result generalizes the Faber-Krahn inequality for the first Robin eigenvalue with $\beta>0$ in the class of bounded open sets of the Euclidean plane $\R^2$. Namely, let $\Omega\subset\R^2$ be a bounded open set and let $B\subset\R^2$ be a ball having the same area, then
\begin{equation}\label{bossel-daners}\lambda_\beta(\Omega)\ge\lambda_\beta(B).\end{equation}
Daners generalized the previous result in \cite{D06} for bounded open subsets of the Euclidean space $\R^n$.

In the context of Riemannian manifolds, one usually compares the Robin eigenvalue of a bounded domain $\Omega$ in a complete manifold $M$ with the one of a geodesic ball in an appropriate simply connected space form. In particular, Chen, Cheng, and Li in \cite{CCL22} proved a Bossel-Daners inequality \eqref{bossel-daners} for bounded domains of a manifold $M$, where either $M$ is the hyperbolic space or it is a compact manifold whose Ricci curvature tensor satisfies a positive lower bound. As proved by Chen, li, and Wei in \cite{CLW23}, the inequality still holds in the case in which $M$ is a complete, non-compact, manifold whose Ricci tensor is non-negative.\medskip

In the case $\beta<0$, Bareket in \cite{B77} famously conjectured that among all Lipschitz sets of a given area in the Euclidian plane, the ball maximizes the first Robin eigenvalue. Freitas and Krej{\v{c}}i{\v{r}}{\'{\i}}k in \cite{FK15} disproved the conjecture: they proved, via an asymptotic expansion, that, for $\abs{\beta}$ sufficiently large, the first Robin eigenvalue of an annulus is larger than the one of the ball having the same measure. At the same time, they proved that for smooth bounded subsets of the Euclidean plane, the conjecture holds true provided that $\beta$ is sufficiently close to $0$. However, fixing the perimeter leads to other interesting comparisons. Indeed, Antunes, Freitas, and Krej{\v{c}}i{\v{r}}{\'{\i}}k in \cite{AFK16} proved a comparison theorem for the first Robin eigenvalue, with $\beta<0$, under a perimeter constraint. Namely, let $\Omega$ be a bounded open set with $C^2$ boundary in the Euclidean plane and let $B\subset\R^2$ be a ball having the same perimeter, then
\begin{equation}\label{robneg}\lambda_\beta(\Omega)\le\lambda_\beta(B).\end{equation}
Bucur et al. in \cite{BFNT19} proved that the inequality \eqref{robneg} holds true in any dimension provided that we restrict the class of admissible sets to the one of the convex sets, or, more in general, the inequality holds for any Lipschitz set which can be written as $\Omega\setminus K$, where $\Omega$ is open and convex and $K$ is a closed set in $\Omega$. Vikulova in \cite{V22} proved the result in the Euclidean space $\R^3$ for bounded convex sets or connected axiconvex sets whose boundary is diffeomorphic to the sphere. 

In the context of Riemannian manifolds, Khalile and Lotoreichik in \cite{KL22} proved the following. Let $\Omega$ be a compact, two-dimensional, simply connected Riemannian manifold with $C^2$ boundary and with Gauss curvature bounded from above by a non-negative constant $\ka_0$, and let $B$ be a geodesic disc in the simply connected space form of Gauss curvature $\ka_0$ with the same perimeter as $\Omega$. Then, for every $\beta<0$, inequality \eqref{robneg} holds.\medskip

Finally, in Riemannian manifolds, other comparison theorems for the first Robin eigenvalue and domain monotonicity properties have been proved by Savo in \cite{S20} and by Li and Wang in \cite{LW20}.

\medskip
 The main objective of this paper is to adapt the techniques of \cite{BFNT19} to prove the following theorem. Note that we denote by $\Hn$ the Hausdorff measure, and we refer to \autoref{defi: convex} for the definition of strong convexity.

\begin{teor}
    \label{teorema1}
    Let $\Omega\subset\Sp^n$ be an open set such that $\bar{\Omega}$ is strongly convex, and let $D$ be a strongly convex geodesic ball with 
    \[
    \Hn(\partial\Omega)=\Hn(\partial D).
    \]
    Then
    \begin{equation}
    \label{eq: negrobin}
        \lambda_\beta(\Omega)\le\lambda_\beta(D),
    \end{equation}
    and the equality holds if and only if, up to a translation, $\Omega=D$.
\end{teor}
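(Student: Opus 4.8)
The plan is to bound $\lambda_\beta(\Omega)$ from above by a radial test function transplanted from $D$ and then to reduce everything to a one–dimensional comparison governed by the perimeters of the inner parallel sets. Write $d_\Omega(x)=\dist(x,\partial\Omega)$, let $\Omega_t=\{x\in\Omega : d_\Omega(x)>t\}$, and set $P_\Omega(t)=\Hn(\partial\Omega_t)$, with $T$ the inradius of $\Omega$; define $P_D$ analogously for $D$, of radius $R$. Since the first eigenfunction of $D$ is radial, $u_D=\phi(d_D)$ where $\phi>0$ solves the Sturm--Liouville problem $-(P_D\phi')'=\lambda_\beta(D)\,P_D\,\phi$ on $[0,R]$ with $\phi'(0)=\beta\phi(0)$ and $\phi'(R)=0$; because $\beta<0$ the profile is decreasing, $\phi'\le 0$. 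First I would use $v=\phi(d_\Omega)$ as a competitor in \eqref{robin}; using $\abs{\nabla d_\Omega}=1$ a.e.\ and the coarea formula, its Rayleigh quotient becomes the weighted one–dimensional quotient
\[
R(v)=\frac{\int_0^{T}\phi'(t)^2 P_\Omega(t)\,dt+\beta\,\phi(0)^2 P_\Omega(0)}{\int_0^{T}\phi(t)^2 P_\Omega(t)\,dt}.
\]
Since $\Omega$ contains an inscribed geodesic ball of radius $T$, monotonicity of the perimeter under inclusion of convex sets gives $T\le R$, so $\phi$ is defined on $[0,T]$, and the perimeter constraint reads $P_\Omega(0)=P_D(0)$.

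Next I would integrate by parts and insert the ODE for $\phi$. Using $\phi'(0)=\beta\phi(0)$ and $P_\Omega(T)=0$ to kill the boundary terms, and eliminating $\phi''$ through $(P_D\phi')'=-\lambda_\beta(D)P_D\phi$, a direct computation yields
\[
R(v)-\lambda_\beta(D)=-\frac{1}{\int_0^T\phi^2 P_\Omega\,dt}\int_0^{T}\phi\,\phi'\,P_D\,h'\,dt,\qquad h:=\frac{P_\Omega}{P_D}.
\]
As $\phi>0$, $\phi'\le0$ and $P_D>0$, the chain $\lambda_\beta(\Omega)\le R(v)\le\lambda_\beta(D)$ follows at once provided $h=P_\Omega/P_D$ is non-increasing on $[0,T]$, i.e.\ $(\log P_\Omega)'\le(\log P_D)'=-(n-1)\cot(R-t)$.

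The heart of the argument, and the main obstacle, is this monotonicity, which I would recast as $r'(t)\le-1$ for the equivalent radius $r(t)$ defined by $P_\Omega(t)=\Hn(\Sp^{n-1})\sin^{n-1}r(t)$ with $r(t)\in(0,\tfrac{\pi}{2})$ (so that $r(0)=R$ by the perimeter constraint). For a.e.\ $t$ the set $\Omega_t$ is convex, and the first variation of perimeter gives $P_\Omega'(t)=-M(\Omega_t)$, where $M$ is the total mean curvature. The required ingredient is a spherical Minkowski-type inequality for convex bodies $K$ in the open hemisphere,
\[
M(K)\ \ge\ (n-1)\cot r_K\,\Hn(\partial K),\qquad \Hn(\partial K)=\Hn(\Sp^{n-1})\sin^{n-1}r_K,
\]
with equality only for geodesic balls. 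Applied to $K=\Omega_t$ this gives $P_\Omega'(t)\le-(n-1)\cot r(t)\,P_\Omega(t)$, which upon differentiating $P_\Omega=\Hn(\Sp^{n-1})\sin^{n-1}r$ is precisely $r'\le-1$, whence $r(t)\le R-t$ and $h'\le0$. The delicate point is that $\Omega$, and each $\Omega_t$, need not be smooth, so both $M$ and the first-variation identity must be read through Federer's curvature measures for convex sets, and the Minkowski inequality must be established in that generality, e.g.\ through quermassintegral (Alexandrov--Fenchel) inequalities on $\Sp^n$.

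Finally, for rigidity: equality $\lambda_\beta(\Omega)=\lambda_\beta(D)$ forces $R(v)=\lambda_\beta(D)$, hence $\int_0^T\phi\phi'P_D h'\,dt=0$; since $\phi\phi'<0$ on $(0,R)$ and $P_D>0$, this forces $h'\equiv0$, i.e.\ $P_\Omega(t)=P_D(t)$ for all $t$ and in particular $T=R$. Tracing the argument backwards, equality must then hold in the spherical Minkowski inequality for every $\Omega_t$, and its rigidity case forces $\Omega_t$, hence $\Omega$, to be a geodesic ball, that is $\Omega=D$ up to an isometry of $\Sp^n$.
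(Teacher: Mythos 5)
Your strategy is essentially the paper's: transplant the radial profile of the eigenfunction on $D$ via the distance from $\partial\Omega$, reduce the Rayleigh quotient to one--dimensional integrals against $P_\Omega(t)$, and control $P_\Omega$ through the differential inequality coming from the Steiner formula and the Alexandrov--Fenchel (Minkowski--type) inequality for convex bodies in $\Sp^n$, extended to non-smooth sets via curvature measures and smooth approximation. The paper concludes differently at the last step: rather than integrating by parts, it compares the three integrals separately, using $P(\Omega_t)\le P(D_t)$ together with the negativity of the numerator $\int_\Omega\abs{\nabla v}^2+\beta\int_{\partial\Omega}v^2$. Your integration-by-parts identity instead needs the stronger fact that $h=P_\Omega/P_D$ is non-increasing; this does follow from the same differential inequality, and your substitution $P=\sigma_n\sin^{n-1}r$, turning it into $r'\le-1$, is a clean linearization that replaces the paper's Gronwall-type comparison lemma.

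Two of your supporting claims are false as stated, though both are repairable. First, $P_\Omega(T)=0$ at the inradius is not true in general: the high ridge $\Omega_T$ can be a lower-dimensional convex set with $\Hn(\partial\Omega_T)>0$ (a spherical analogue of the stadium), so the boundary term $P_D(T)\,\phi(T)\,\phi'(T)\,h(T)$ survives the integration by parts; since $\phi>0$ and $\phi'\le 0$ it is nonpositive and the inequality stands, but your equality analysis must account for it. Second, the first-variation identity $P_\Omega'(t)=-M(\Omega_t)$ fails for non-smooth convex sets (at corners the perimeter decreases strictly faster than the total mean curvature measure predicts); only the inequality $P_\Omega'(t)\le-\Phi_{n-2}(\Omega_t)$ holds at points of differentiability, which is the direction you need and is what the paper proves by squeezing $(\Omega_t)^s$ between $\Omega_t$ and $\Omega_{t-s}$ and invoking monotonicity of perimeter. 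The one substantive gap is the unexamined assertion that each $\Omega_t$ is convex. On the sphere this is true but genuinely non-trivial --- it rests on Bangert's convexity results for the signed distance function and is one of the main technical points of the paper --- and it is precisely the step that fails in hyperbolic space, where inner parallel sets of convex bodies need not be convex. Without it you cannot apply the Minkowski/Alexandrov--Fenchel inequality to $\Omega_t$, so this ingredient must be supplied, not assumed.
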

Notice that, thanks to \cite[Theorem 5]{S20}, we have that the eigenvalue is increasing with respect to the inclusion among balls, so that \eqref{eq: negrobin} also holds true replacing $D$ with $\Sp^n\setminus D$.

The proof relies on the use of the method of parallel coordinates (see \cite{PW61} and \cite{CG01}) to construct a suitable test function on convex subsets of the sphere $\Sp^n$. Nevertheless, the main difficulty here was to recover classical results about convex sets on the sphere. In particular, the main ingredients of the proof are:
\begin{enumerate}[label=(\roman*)]
    \item convexity properties of inner and outer parallel sets;
    \item monotonicity of perimeters with respect to the inclusion for convex sets;
    \item Steiner's formulae;
    \item Alexandrov-Fenchel inequality for the mean curvature.
\end{enumerate}
To infer convexity properties of inner parallel and outer parallel sets (see \autoref{defi: parallelsets}) we need some convexity property of the distance function provided by Bangert in his paper \cite{B78} (see \autoref{teor: convexdist}). The monotonicity of the perimeter has been proved by Bangert in \cite{B81} (see \autoref{teor: monotonicity}). The Steiner's formulae have been extended to $C^2$ convex sets of the sphere by Allendoerfer in \cite{A48} and the Alexandrov-Fenchel inequality has been recently extended to $C^2$ convex sets of the sphere by Makowski and Scheuer in \cite{MS16}. However, our result in \autoref{teorema1} only requires the set to be convex: to avoid the constraint on the regularity of the boundary, we recover a general theory for Steiner's formulae and curvature measures introduced by Federer in \cite{F59} in $\R^n$ and successively generalized to simply connected space forms by Kohlmann in \cite{K91}. We are then able to generalize Alexandrov-Fenchel inequalities for general convex sets (see \autoref{cor: alexfench}) by approximating convex sets with smooth convex sets, using a result that has been proved by Bangert in \cite{B78} (see also \autoref{teor: approx}).

In addition, we are also able to adapt the techniques in \cite{AGM22} to prove a stability result resumed in the following 
\begin{teor}
    \label{teorema2}
    Let $\Omega\subset\Sp^n$ be an open set such that $\overline{\Omega}$ is strongly convex, and let $D$ be a strongly convex geodesic ball such that 
\[
\Hn(\partial\Omega)=\Hn(\partial D).
\]
For every $\beta<0$, let $u$ be an eigenfunction relative to $\lambda_\beta(D)$, and let 
\[u_m=\min_{p\in\overline{D}} u(p).\]
Then,
\begin{equation}\label{quant}\dfrac{\lambda_\beta(D)-\lambda_\beta(\Omega)}{\abs{\lambda_\beta(\Omega)}}\ge\dfrac{u_m^2}{\norma{u}_{L^2(D)}^2}(\abs{D}-\abs{\Omega}).\end{equation}
\end{teor}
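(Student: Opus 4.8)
The plan is to reuse the parallel-coordinate test function constructed in the proof of \autoref{teorema1} and to track the resulting Rayleigh quotients quantitatively. Writing the eigenfunction $u$ of $D$ as a function $u=u(t)$ of the distance $t=\dist(\cdot,\partial D)$ to the boundary, I set $v(x)=u\bigl(\dist(x,\partial\Omega)\bigr)$ on $\Omega$; this is an admissible competitor in \eqref{robin}, so $\lambda_\beta(\Omega)\le R(\Omega)$, where $R(\Omega)$ is the Rayleigh quotient of $v$. Denoting by $P_\Omega(t)$ the perimeter of the inner parallel set of $\Omega$ at distance $t$, by $\rho_\Omega$ the inradius, and by $P_D,\rho_D$ the analogues for $D$, the coarea formula (with $\abs{\nabla\dist}=1$) gives
\begin{equation*}
\int_\Omega\abs{\nabla v}^2\,d\mu=\int_0^{\rho_\Omega}u'(t)^2P_\Omega(t)\,dt=:A(\Omega),\qquad \int_\Omega v^2\,d\mu=\int_0^{\rho_\Omega}u(t)^2P_\Omega(t)\,dt=:B(\Omega),
\end{equation*}
while $\int_{\partial\Omega}v^2\,d\Hn=u(0)^2P$, with $P=\Hn(\partial\Omega)=\Hn(\partial D)$ the common perimeter. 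The corresponding quantities $A(D),B(D)$ for $D$ satisfy $B(D)=\norma{u}_{L^2(D)}^2$ and the exact identity $\lambda_\beta(D)B(D)=A(D)+\beta u(0)^2P$, since $u$ realizes the Rayleigh quotient on $D$.

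The geometric input, already established while proving \autoref{teorema1}, is that among strongly convex sets of equal perimeter the ball has the largest inner-parallel perimeter profile and the largest inradius, that is $P_\Omega(t)\le P_D(t)$ for every $t$ and $\rho_\Omega\le\rho_D$; these immediately yield $A(\Omega)\le A(D)$ and $B(\Omega)\le B(D)$. The crucial refinement is a lower bound on the volume gap. Writing
\begin{equation*}
B(D)-B(\Omega)=\int_0^{\rho_\Omega}u(t)^2\bigl(P_D(t)-P_\Omega(t)\bigr)\,dt+\int_{\rho_\Omega}^{\rho_D}u(t)^2P_D(t)\,dt,
\end{equation*}
both integrands are nonnegative, so bounding $u(t)^2\ge u_m^2$ (recall $u>0$, hence $u\ge u_m>0$) and using $\int_0^{\rho_D}P_D\,dt=\abs{D}$, $\int_0^{\rho_\Omega}P_\Omega\,dt=\abs{\Omega}$ gives $B(D)-B(\Omega)\ge u_m^2(\abs{D}-\abs{\Omega})$.

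It remains to assemble these estimates in the correct normalization. Substituting the identity for $\beta u(0)^2P$ into the inequality $\lambda_\beta(\Omega)B(\Omega)\le A(\Omega)+\beta u(0)^2P$ (which is merely $\lambda_\beta(\Omega)\le R(\Omega)$) produces $\lambda_\beta(D)B(D)-\lambda_\beta(\Omega)B(\Omega)\ge A(D)-A(\Omega)\ge0$. The key algebraic step is to expand the left-hand side as $(\lambda_\beta(D)-\lambda_\beta(\Omega))B(D)+\lambda_\beta(\Omega)(B(D)-B(\Omega))$, grouping around $B(D)$ rather than $B(\Omega)$: this is exactly what makes $\abs{\lambda_\beta(\Omega)}$ appear in the right place. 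Discarding the nonnegative term $A(D)-A(\Omega)$ and using $\lambda_\beta(\Omega)<0$, I obtain
\begin{equation*}
(\lambda_\beta(D)-\lambda_\beta(\Omega))B(D)\ge\abs{\lambda_\beta(\Omega)}\,(B(D)-B(\Omega))\ge\abs{\lambda_\beta(\Omega)}\,u_m^2(\abs{D}-\abs{\Omega}),
\end{equation*}
and dividing by $\abs{\lambda_\beta(\Omega)}B(D)=\abs{\lambda_\beta(\Omega)}\norma{u}_{L^2(D)}^2$ yields \eqref{quant}. The only genuine difficulty is the bookkeeping in this last rearrangement: the naive grouping (keeping $B(\Omega)$ as common factor) gives a prefactor $\abs{\lambda_\beta(D)}/B(\Omega)$ rather than $\abs{\lambda_\beta(\Omega)}/B(D)$, and since $\abs{\lambda_\beta(\Omega)}\ge\abs{\lambda_\beta(D)}$ while $B(\Omega)\le B(D)$ these do not compare in the needed direction, so the specific splitting above is essential. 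All convexity, coarea, and monotonicity facts for the inner parallel sets are borrowed verbatim from the proof of \autoref{teorema1}.
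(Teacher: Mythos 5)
Your proof is correct and follows the paper's overall strategy: the same parallel-coordinate test function $v=\phi(d(\cdot,\partial\Omega))$, the same coarea reduction with the level-set comparison $P(\Omega_t)\le P(D_t)$ and $R_\Omega\le R$, and a final rearrangement that is algebraically equivalent to the paper's (the paper divides the Rayleigh quotients to get $\lambda_\beta(\Omega)\le\lambda_\beta(D)\bigl(1-\tfrac{u_m^2}{\norma{u}_{L^2(D)}^2}(\abs{D}-\abs{\Omega})\bigr)^{-1}$ and then reorders, which is the same computation as your grouping around $B(D)$). The one place where you genuinely diverge is the key estimate $B(D)-B(\Omega)\ge u_m^2(\abs{D}-\abs{\Omega})$, and there your version is the better one. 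You keep both nonnegative pieces,
\[
B(D)-B(\Omega)=\int_0^{R_\Omega}\phi^2(t)\,\bigl(P(D_t)-P(\Omega_t)\bigr)\,dt+\int_{R_\Omega}^{R}\phi^2(t)\,P(D_t)\,dt,
\]
and bound $\phi^2\ge u_m^2$ in each, so that the two geometric integrals recombine exactly into $\abs{D}-\abs{\Omega}$. The paper instead gives away the first piece at the outset (replacing $P(\Omega_t)$ by $P(D_t)$ on $[0,R_\Omega]$) and then needs the inequality $\int_{R_\Omega}^{R}P(D_t)\,dt\ge\abs{D}-\abs{\Omega}$; but by coarea $\abs{D}-\abs{\Omega}=\int_{R_\Omega}^{R}P(D_t)\,dt+\int_0^{R_\Omega}\bigl(P(D_t)-P(\Omega_t)\bigr)\,dt$, so that intermediate inequality actually points the other way, and the justification offered there (via $\abs{B_{R_\Omega}}\le\abs{\Omega}$) would require $\abs{B_{R-R_\Omega}}\ge\abs{B_R}-\abs{B_{R_\Omega}}$, which fails because ball volumes are superadditive in the radius on a hemisphere. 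Your two-term decomposition reaches the same conclusion without that step, so it is the clean route to \eqref{quant}; the rest of your argument (the identity $\lambda_\beta(D)B(D)=A(D)+\beta u(0)^2P$, the monotonicities $A(\Omega)\le A(D)$, $B(\Omega)\le B(D)$, and the observation that one must factor out $B(D)$ rather than $B(\Omega)$) is accurate.
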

\medskip
The paper is organized as follows. In \autoref{notations} we give introductory notions and classical tools of Riemannian manifolds and integration theory. In \autoref{convexity} we give classical results and definitions about convexity in Riemannian manifolds, with special attention to the convexity of the inner parallel sets and convex approximation. In \autoref{curvatures} we give the definition of curvature measures, and we state the Steiner formula in $\Sp^n$ and the Alexandrov-Fenchel inequality. In \autoref{proof1} we prove \autoref{teorema1} and \autoref{teorema2}. Finally, in \autoref{remarks} we discuss the limits of the proof in the hyperbolic space.

\section{Notation and tools}
\label{notations}
In the following, given a smooth, orientable Riemannian $n$-manifold $(M,g)$, we will denote by $d$ the Riemannian distance 
\[
    d(p,q)=\min_{\substack{\gamma\in C^\infty((0,1);M) \\ \gamma(0)=p \\ \gamma(1)=q}}\:\int_0^1 g(\gamma'(t),\gamma'(t))\,dt
\]
induced by $g$; by $d\mu$ its volume form which is expressed locally in coordinates as
\[
    d\mu=\sqrt{\abs{\det(g_{ij})}}dx_1\dots dx_n;
\]
and we will denote by $\abs{\cdot}$ the classical Riemannian volume
\[
\abs{E}=\int_E\,d\mu.
\]
We let $TM$ denote the tangent bundle on $M$, by $\Gamma(TM)$ the sections of the bundle, namely the space of vector fields, and by $T_pM$ the tangent space at $p$. We also recall that for every $(v,p)\in TM$ a \emph{geodesic} starting from $p$ with velocity $v$ is the unique curve $\gamma=\gamma_{p,v}$ such that $\gamma(0)=p$, and $\gamma'(0)=v$, and such that it solves the system of equations written in local coordinates as (using the Einstein notation on repeated indices) 
\[
\gamma_i''(t)+\Gamma^i_{jk}(\gamma(t))\,\gamma'_j(t)\,\gamma'_k(t)=0, \qquad \qquad i=1,\dots,n
\]
with $\Gamma^i_{jk}$ representing the Christoffel symbols of the metric $g$. When $M$ is complete, we can extend the geodesics $\gamma_{p,v}\in C^\infty(\R;M)$, and we denote by
\[
\exp:TM\to M \qquad \exp_p: T_pM\to M
\]
the exponential map defined as
\begin{equation}
\label{eq: expdef}
\exp(p,v)=\exp_p(v)=\gamma_{p,v}(1).
\end{equation}
For every $p\in M$ we will denote the cutlocus of $p$ in $M$ as 
\[
\cut(p)=\exp_p(\partial\seg(p)),
\]
where
\[
\seg(p)=\Set{v\in T_p M | \gamma_{p,v} \text{ minimizes the distance $d(p,\gamma_{p,v}(1))$}}.
\]

We will denote by $\mathcal{H}^k$ the Hausdorff measure relative to the Riemannian distance on $M$. When necessary, we will denote the Hausdorff measure by $\mathcal{H}^k_g$ to highlight the dependence on the metric $g$. We refer to \cite[Section IV]{C01} for basic properties on this topic in the Riemannian setting. We denote by $\sigma_n$ the $(n-1)$-dimensional measure of the boundary of a hemisphere in the sphere $\Sp^n$ of sectional curvature $1$. Moreover, we will denote by $\langle\cdot,\cdot\rangle$ the canonical scalar product in $\R^n$. 

\subsection{General notions}
In the following, we will need some approximation argument. Hence, we define the Hausdorff distance of sets. Let us recall that given a closed set $\Omega\subset M$ the distance from $\Omega$ is defined as 
\[
    d(p,\Omega)=\inf_{q\in \Omega}d(p,q).
\]
\begin{defi}
\label{defi: parallelsets}
    Let $M$ be a Riemannian manifold, and let $K\subset M$ be a compact set. For every $t\ge 0$, we define the \emph{inner parallel set}
    \[
        (K)_t=\Set{p\in K | d(p,\partial K)\ge t},
    \]
    and the \emph{outer parallel} set
    \[
        (K)^t =\Set{p\in M | d(p,K)\le t}.
    \]
\end{defi} 
\begin{defi}[Hausdorff distance]
Let $M$ be a Riemannian manifold, and let $K_1,K_2\subset M$ be two compact sets. We define the Hausdorff distance as
\[
d^H(K_1,K_2)=\inf\Set{t\ge 0 | \begin{gathered}
    K_1\subset (K_2)^t \\
    K_2\subset (K_1)^t
\end{gathered}}
\]
\end{defi}
We refer to \cite[\S 2]{W76} for the following definitions.
\begin{defi}
    Let $M$ be a Riemannian $n$-manifold, and let $\Sigma\subset M$. We say that $\Sigma$ is a \emph{strongly Lipschitz submanifold} of $M$ of dimension $k$ if for every $p\in \Sigma$ there exist a $C^1$ chart $(U,\varphi)$ in $M$ around $p$, an open set $U'\subset\R^{k}$, and a Lipschitz function $f:U'\to \R^{n-k}$ such that
    \[
        \varphi(\Sigma\cap U)=\Set{(x,f(x))\in \varphi(U) | x\in U'}.
    \]
\end{defi}
\begin{defi}
    Let $M$ be a Riemannian $n$-manifold, and let $\Omega\subset M$. We say that $\Omega$ \emph{has strongly Lipschitz boundary} if $\Omega=\overline{\mathring{\Omega}}$, and $\partial \Omega$ is a strongly Lipschitz submanifold of $M$ of dimension $n-1$.
\end{defi}
\begin{defi}
    Let $X, Y$ be two metric spaces. We say that a homeomorphism
    \[
        f:X\to Y
    \]
    is \emph{locally bi-Lipschitz} if both $f$ and $f^{-1}$ are locally Lipschitz.
\end{defi} 
\begin{defi}
\label{defi: secfundform}
Let $(M,g)$ be a Riemannian $n$-manifold, and let $\Sigma$ be a $C^2$ oriented, embedded $(n-1)$-submanifold of $M$. We define the \emph{second fundamental form} $h$ of $\Sigma$ in $M$ as the 2-form such that for every $X,Y\in\Gamma(T\Sigma)$
\[
h(X,Y)=g(X,\nabla_Y\nu),
\]
where $\nabla$ is the Levi-Civita connection of $M$, and $\nu$ is the normal to $\Sigma$. 
\end{defi}
\begin{prop}
    Let $M$ and $\Sigma$ as in \autoref{defi: secfundform}. Then:
    \begin{enumerate}[label=(\roman*)]
        \item $h$ is symmetric, namely
        \[
            h(X,Y)=h(Y,X) \qquad \forall X,Y\in\Gamma(T\Sigma);
        \]
        \item for every $\sigma\in\Sigma$ there exist $n-1$ eigenvalues $k_1(\sigma)\le\dots\le k_{n-1}(\sigma)$ of $h$ and we say that $k_i$ are the \emph{principal curvatures of $\Sigma$}.
    \end{enumerate}
\end{prop}
\begin{defi}
Let $M$ be a Riemannian $n$-manifold, let $\Sigma$ be a $C^2$ oriented, compact, embedded $(n-1)$-submanifold of $M$. For every $p\in\Sigma$ and for every $1\le j \le n-1$, we denote by
\[
H_j(p)=\sum_{1\le i_1<\dots<i_j\le n-1} k_{i_1}(p)\dots k_{i_j}(p)
\]
the $j$-th homogeneous symmetric form of the principal curvatures, and
\[
H_0(p)=1.
\]
In particular, we say that $H_1(p)$ is the \emph{mean curvature} of $\Sigma$ in $p$.

\medskip
We now state the coarea and area formula.
\begin{defi}
   Let $V$ be a normed vector space of dimension $n$. For every $r=1,\dots,n$ we denote by $\bigwedge\nolimits_r V$ the space of alternating $r$-forms on the dual $V^*$. 
\end{defi}
If $V=T_p M$ is a tangent space for a Riemannian $n$-manifold $M$ at a point $p$, for every $r\le n$ we use the notation 
\[
\bigwedge\nolimits_r M_p:=\bigwedge\nolimits_r T_pM
\]
to denote the inner product of $r$ copies of $T_pM$.
\begin{defi}
        Let $(M,g)$ be a Riemannian $n$-manifold of class $C^1$, let $(N,h)$ be a Riemannian $k$-manifold of class $C^1$, let 
        \[
            r=\min\{n,k\},
        \]
        and let $f:M\to N$ be a map such that $f$ is differentiable in $p\in M$. We define the natural extension of $df_p$ to $\bigwedge_r M_p$ as the linear map 
        \[
        \wedge_r df_p:\bigwedge\nolimits_r M_p\to\bigwedge\nolimits_r N_{f(p)}
        \]
        such that for every $v_1,\dots,v_r\in T_pM$
        \[
           \wedge_{r}df_p(v_1\wedge\dots\wedge v_{r})=df_p(v_1)\wedge\dots df_p(v_{r}).
        \]
        We define the \emph{jacobian} of $f$ as
        \[
            Jf(p)=\norma{\wedge_{r}df_p}, 
        \]
        where the norm $\norma{\cdot}$ denotes the operatorial norm in the space of linear applications $\mathcal{L}(\bigwedge_rM_p,\bigwedge_r N_{f(p)})$ with the respective norms $\norma{\cdot}_{g,p}$ and $\norma{\cdot}_{h,f(p)}$.
\end{defi}  
For the proof of the following theorem, we refer to \cite[Theorem 3.1]{F59}
\begin{teor}[Coarea Formula]
    \label{teor: coarea}
    Let $(M,g)$ be a Riemannian $n$-manifold, let $(N,h)$ be a Riemannian $k$-manifold with $n\ge k$, and let $f:M\to N$ be a Lipschitz map. Then $f$ is $\mathcal{H}^n$-a.e. differentiable and for every $\mathcal{H}^n$-integrable function $\varphi:M\to \R$ we have
    \[
        \int_M\varphi(x)\,J\!f(x)\,d\mathcal{H}^n(x)=\int_N\int_{f^{-1}(y)}\varphi(z)\,d\mathcal{H}^{n-k}(z)\,d\mathcal{H}^k(y).
    \]
\end{teor}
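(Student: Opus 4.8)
The plan is to derive the Riemannian identity from the Euclidean coarea formula by localizing with charts, and to obtain the Euclidean case by the classical Federer scheme: verify the formula for linear maps by a direct computation, upgrade to Lipschitz maps by approximating $f$ with affine maps on a measurable decomposition of the domain, and finally lift everything to $(M,g)$ and $(N,h)$ through a partition of unity. The $\mathcal{H}^n$-a.e. differentiability of $f$ is not special to the statement: it follows from Rademacher's theorem applied in local coordinates, since in any $C^1$ chart $f$ becomes a Lipschitz map between open subsets of Euclidean spaces.

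First I would treat a linear map $L : \R^n \to \R^k$ with $n \ge k$. If $\operatorname{rank} L < k$, then $JL = 0$, so the left-hand side vanishes, while the image $L(\R^n)$ is an $\mathcal{H}^k$-null subspace of $\R^k$ and every nonempty fiber sits over this null set, so the right-hand side vanishes as well. If $L$ has full rank, I decompose $\R^n = \ker L \oplus (\ker L)^\perp$; on $(\ker L)^\perp$ the map $L$ is a linear isomorphism onto $\R^k$ whose Jacobian is exactly the coarea factor $JL = \sqrt{\det(L L^\ast)}$, and Fubini applied to this splitting (the fibers $L^{-1}(y)$ being affine translates of $\ker L$) gives the formula for $L$.

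Next I would pass to a general Lipschitz $f$. Writing $C = \{\, p : Jf(p) = 0 \,\}$ for the \emph{critical set}, on the regular part I use a Lusin-type approximation: up to a set of arbitrarily small measure, the domain is covered by countably many Borel pieces $E_j$ on each of which $f$ agrees, to a controlled error, with the restriction of an affine map $L_j$, with $Jf \approx JL_j$. Applying the linear result on each $E_j$, summing, and letting the error tend to zero proves the identity on $M \setminus C$; here the companion area formula controls how the slices $f^{-1}(y) \cap E_j$ deform under the approximation. On $C$ the left-hand side contributes nothing since $Jf = 0$, and the coarea (Eilenberg) inequality $\int_N \mathcal{H}^{n-k}(f^{-1}(y) \cap C)\, d\mathcal{H}^k(y) \le c \int_C Jf \, d\mathcal{H}^n = 0$ shows that the fibers through $C$ contribute nothing to the right-hand side either.

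Finally, to reach the Riemannian statement I cover $M$ by $C^1$ charts and use a subordinate partition of unity to reduce to $\varphi$ supported in a single chart. In coordinates $\mathcal{H}^n_g = \sqrt{\det(g_{ij})}\,\Ln$, and likewise for $\mathcal{H}^k_h$ on $N$ and for the induced $(n-k)$-dimensional measure on the fibers; the Jacobian $Jf$, being defined intrinsically from $df_p$ through the metrics $g$ and $h$, transforms precisely so that these density factors recombine into the Euclidean coarea factor of the coordinate representation. I expect the main obstacle to be exactly this bookkeeping together with the equality (as opposed to the easy inequality) in the Lipschitz step: one must show that the $\mathcal{H}^{n-k}$-measures of the fibers converge under the affine approximation with summable errors, and that the intrinsic coarea factor is compatible with the three different Hausdorff measures involved, so that nothing is lost on the critical set.
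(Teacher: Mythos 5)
The paper does not prove this statement at all: it is quoted directly from Federer \cite[Theorem 3.1]{F59}, and your outline --- the linear case via Fubini on the splitting $\ker L\oplus(\ker L)^{\perp}$ with $JL=\sqrt{\det(LL^{*})}$, the Lusin-type affine decomposition for general Lipschitz $f$ together with the Eilenberg inequality to kill the critical set, and finally charts plus a partition of unity with the metric densities $\sqrt{\det(g_{ij})}$ absorbed into the intrinsic Jacobian --- is precisely the standard argument behind that reference, so your approach matches the cited proof. The one step not to underestimate is the equality (rather than the easy inequality) in the Lipschitz case, which requires knowing that $f^{-1}(y)$ is $(n-k)$-rectifiable for $\mathcal{H}^{k}$-a.e.\ $y$ so that the area formula can be applied along the slices and on the fibers when changing charts; you correctly identify this as the main obstacle, so the plan is sound.
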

For the following theorem we refer to \cite[Theorem 3.2.5, Remark 3.2.46]{F96}.
\begin{teor}[Area Formula]
    \label{teor: area}
    Let $(M,g)$ be a Riemannian $n$-manifold, let $(N,h)$ be a Riemannian $k$-manifold with $n\le k$, and let $f:M\to N$ be a Lipschitz map. Then $f$ is $\mathcal{H}^n$-a.e. differentiable and for every $\mathcal{H}^n$-measurable function $\varphi:M\to \R$ and we have
    \[
        \int_M\varphi(x)\,J\!f(x)\,d\mathcal{H}^n(x)=\int_N\int_{f^{-1}(y)}\varphi(z)\,d\mathcal{H}^{0}(z)\,d\mathcal{H}^k(y).
    \]
\end{teor}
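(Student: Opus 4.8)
The plan is to reduce the statement to the classical Euclidean area formula (Federer, Theorems 3.2.3--3.2.5, or Evans--Gariepy) by localizing in charts; the only genuinely Riemannian content is checking that the intrinsically defined Jacobian $J\!f=\norma{\wedge_n df_p}$ and the Riemannian Hausdorff measures on $M$ and $N$ recombine to reproduce exactly the Euclidean ingredients, so that every chart-dependent factor cancels.

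First I would reduce to a local identity. Since $M$ and $N$ are second countable, I cover $M$ by countably many relatively compact $C^1$ charts and $N$ by charts, and---using the continuity of $f$ to refine both covers---I write $M$ as a countable disjoint union of Borel sets $A_i$ with $A_i\subset U_i$ and $f(A_i)\subset V_i$ for charts $\psi_i\colon U_i\to\R^n$ and $\phi_i\colon V_i\to\R^k$. For $\varphi\ge 0$ the decomposition $\varphi=\sum_i\varphi\,\chi_{A_i}$ gives $\sum_{z\in f^{-1}(y)}\varphi(z)=\sum_i\sum_{z\in f^{-1}(y)\cap A_i}\varphi(z)$, so by monotone convergence it suffices to prove the identity for each piece separately; a general $\mathcal{H}^n$-measurable $\varphi$ then follows by splitting into positive and negative parts, and the single-piece case reduces by linearity and monotone convergence to $\varphi=\chi_E$ with $E\subset A_i$ Borel.

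Next I would pass to coordinates. The representative $\tilde f=\phi_i\circ f\circ\psi_i^{-1}$ is Lipschitz between open subsets of $\R^n$ and $\R^k$, because on a compact chart the Riemannian distance and the pulled-back Euclidean distance are bi-Lipschitz equivalent. Rademacher's theorem then yields a.e. differentiability of $\tilde f$, hence $\mathcal{H}^n$-a.e. differentiability of $f$ as a map of manifolds, since the charts are $C^1$; this proves the differentiability assertion. For the measures, the restriction of $\mathcal{H}^n_g$ to $U_i$ has density $\sqrt{\det g}$ with respect to the pulled-back flat measure, and the Hausdorff measure on the image carries a corresponding density built from $h$. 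The key pointwise fact is the linear-algebra identity $J\!f(p)=\norma{\wedge_n df_p}=\sqrt{\det\!\big(df_p^{*}\,df_p\big)}$, the volume-scaling factor of the linear map $df_p\colon(T_pM,g_p)\to(T_{f(p)}N,h_{f(p)})$, obtained by evaluating $\wedge_n df_p$ on a decomposable unit $n$-vector coming from a $g_p$-orthonormal basis.

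Applying the Euclidean area formula to $\tilde f$ and changing variables back through the charts, the Euclidean Jacobian of $\tilde f$ and the density factors $\sqrt{\det g}$, $\sqrt{\det h}$ recombine so that all chart-dependent quantities cancel, leaving $\int_{A_i}\chi_E\,J\!f\,d\mathcal{H}^n$ on the left and the fiber-counting integral against the Hausdorff measure on $N$ on the right; summing over $i$ by monotone convergence recovers the global formula, together with the Borel measurability of $y\mapsto\sum_{z\in f^{-1}(y)}\varphi(z)$ inherited from the Euclidean theorem. The main obstacle is exactly this recombination: verifying that the intrinsic definitions of $J\!f$ and of the Riemannian Hausdorff measures force every metric density to cancel against the Euclidean Jacobian at $\mathcal{H}^n$-a.e. point, uniformly enough to legitimately localize Federer's result. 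The remaining ingredients---Rademacher, the countable Borel decomposition, and the handling of multiplicity---are routine localizations of the Euclidean area formula.
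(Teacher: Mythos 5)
Your proposal should first be set against what the paper actually does: the paper contains no proof of this theorem at all — it is imported from \cite[Theorem 3.2.5, Remark 3.2.46]{Federer1996}, where Remark 3.2.46 is precisely Federer's observation that the Euclidean area formula transfers to Riemannian manifolds by chart localization. So you are reconstructing the argument behind that remark, and your architecture — a countable Borel partition $\{A_i\}$ subordinate to charts on both sides, Rademacher in coordinates for the $\mathcal{H}^n$-a.e.\ differentiability, the identification $J\!f(p)=\norma{\wedge_n df_p}=\sqrt{\det\bigl(df_p^*\,df_p\bigr)}$ (correct, since $\bigwedge\nolimits_n M_p$ is one-dimensional, so the operator norm is attained on a unit decomposable $n$-vector), and reassembly by monotone convergence — is the standard and correct route. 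But there is one genuine gap, sitting exactly where you place the ``main obstacle''. The domain-side conversion is as you say: $\mathcal{H}^n_g$ and the flat chart measure are both $n$-dimensional measures on an $n$-dimensional set, so a density $\sqrt{\det g}$ exists. The target-side claim — ``the Hausdorff measure on the image carries a corresponding density built from $h$'' — fails whenever $n<k$. The Euclidean area formula applied to $\tilde f=\phi_i\circ f\circ\psi_i^{-1}$ produces the Euclidean measure $\mathcal{H}^n_e$ on $\R^k$, concentrated on the $n$-rectifiable image $\tilde f(\psi_i(A_i))$, and transporting an $n$-dimensional Hausdorff measure carried by an $n$-rectifiable set $S\subset\R^k$ through the bi-Lipschitz chart $\phi_i^{-1}$ costs the \emph{tangential} Jacobian of $\phi_i^{-1}$ along the approximate tangent planes $T_yS$ — a quantity depending on the direction of $T_yS$ at each point, not any function of $\det h$ alone. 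To close the argument you need two additional ingredients: the generalized area formula for rectifiable sets (\cite[Theorem 2.91]{AFP00}, or \cite[3.2.20]{Federer1996}), and the fact that for $\mathcal{H}^n$-a.e.\ $p$ with $J\!f(p)>0$ the approximate tangent plane to the image at $f(p)$ is $\mathrm{im}\,df_p$ (the set $\{J\!f=0\}$ is harmless, since the Euclidean theorem already forces its image to be $\mathcal{H}^n$-null). With these, the tangential factor of $\phi_i$ hidden inside $J\tilde f$ cancels against the tangential Jacobian of $\phi_i^{-1}$, and the clean cancellation you predicted does occur; alternatively, Kirchheim's metric area formula avoids charts on $N$ altogether.

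Separately, note that the statement as printed cannot be proved, by your method or any other, because it is false for $n<k$: the outer integral must be against $d\mathcal{H}^n(y)$, not $d\mathcal{H}^k(y)$. With $\mathcal{H}^k$ the right-hand side vanishes identically when $n<k$, since $f(M)$ is $\mathcal{H}^k$-null, while the left-hand side need not (take an isometric embedding of an interval into $\R^2$). The $\mathcal{H}^n$ version is what Federer proves, what your chart computation actually yields, and what the paper uses later: in the remark following \autoref{defi: curvnormbundl}, the change of variables via $\nu_\Omega$ maps the $(n-1)$-dimensional $\partial\Omega$ into the $2n$-dimensional $TM$ and integrates against $\mathcal{H}^{n-1}$ on the image. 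Your write-up never names the measure appearing on the right-hand side explicitly (``the Hausdorff measure on $N$''); make it $\mathcal{H}^n$, and flag the discrepancy with the printed statement.
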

\end{defi}

\subsection{Convexity in Riemannian manifolds}
\label{convexity}
In this section, we aim to give a general overview of convexity in Riemannian manifolds, and then we will study properties of convex sets in the specific case of the sphere $\Sp^n$. In order to give some convexity definitions in the Riemannian setting, we introduce the notions of supporting cone and normal cone. (We recall the definition of the exponential map in \eqref{eq: expdef}.)
\begin{defi}
Let $M$ be a Riemannian manifold and $C\subset M$ with non-empty interior. For every $p\in\partial C$ we define the \emph{(local) supporting cone} of $C$ in $p$ as
\[
\mathcal{C}_C(p)=\Set{\xi\in T_pM | \exists\eps>0 : \exp_p(t\xi)\in\mathring{C} \quad \forall t\in(0,\eps)},
\]
and the \emph{(internal) normal cone} as its dual cone 
\[
\mathcal{C}_C(p)^*=\Set{\nu \in T_pM | \scalar{\nu}{\xi}\ge 0 \quad \forall\xi\in\mathcal{C}_C(p)}.
\]
\end{defi}
Then, recalling that we use the notation $\overline{pq}$ to denote the minimal geodesic connecting $p$ and $q$ is unique in $M$,  we give the following definitions

\begin{defi}
\label{defi: convex}
    Let $M$ be a Riemannian manifold, and let $C_1,C_2\subset M$. We say that:
    \begin{enumerate}[label=(\alph*)]
    \item $C_1$ is \emph{weakly convex} if for every $p,q\in C_1$ there exists a minimal geodesic $\gamma:[a,b]\to M$ connecting $p$ and $q$ contained in $C_1$;
    \item $C_1$ is \emph{strongly convex} if for every $p,q\in C_1$ there exists a unique minimal geodesic $\overline{pq}$ connecting $p$ and $q$ in $M$, and $\overline{pq}\subseteq C_1$;
    \item $C_1$ is \emph{locally convex} if for every $p\in \bar{C_1}$ there exists $\eps>0$ and a metric ball $B_\eps(p)$ such that $C_1\cap B_\eps(p)$ is strongly convex;
    \item $C_1$ is \emph{locally strictly convex} if there exists a $\delta>0$ such that for every point $p\in \partial C_1$ and for every $\nu\in \mathcal{C_1}_C(p)^*$ the following holds: there exists an hypersurface $H$ orthogonal to $\nu$ in $p$ such that $H\cap C_1=\{p\}$ and its second fundamental form in $p$ with respect to $\nu$ has eigenvalues greater than $\delta$;
    \item $C_1$ is \emph{totally convex} in $C_2$ if $C_1\subseteq\mathring{C_2}$ and for every $p,q\in C_1$ and every geodesic
    \[
    \gamma:[a,b]\to C_2
    \]
    connecting $p$ and $q$ inside $C_2$ we have $\gamma([a,b])\subseteq C_1$.
    \end{enumerate}
\end{defi}
\noindent We refer to \cite{CG72} for definitions (a)-(c), to \cite{B78} for definition (d), and to \cite{B81} for definition (e).

We now give some useful properties about convex sets in the sphere. 
\begin{oss}
\label{oss: convexinhemi}
Recall that the definition of strong convexity is actually imposing some geometric constraint on the set $C$. For instance, on the sphere $\Sp^n$ we have that if $C\subseteq\Sp^n$ is a closed strongly convex set, then $C$ is contained in an open hemisphere. Indeed, let $C\subseteq\Sp^n$ be a closed strongly convex set. By definition of strong convexity, we have that if $p\in C$ then necessarily the antipodal point $-p\notin C$. Therefore, we can find a plane separating $C$ and its antipodal set $-C$: indeed, 
\[
\Omega^+:=\Set{tx\in\R^{n+1} | \begin{gathered}
    t>0, \\
    x\in C
\end{gathered}}
\]
and 
\[
\Omega^-:=\Set{tx\in\R^{n+1} | \begin{gathered}
    t>0, \\
    x\in -C
\end{gathered}}
\]
are two disjoint convex cones in $\R^{n+1}$, and they can be separated by a plane passing through the origin. This in particular implies that $C$ is contained in a hemisphere.
\end{oss}
\begin{oss}
\label{oss: weakinhemi}
    If $C\subset\Sp^n$ is weakly convex and it is contained in a hemisphere, then it is strongly convex, since for every couple of points $p,q\in C$ there exists a unique minimal geodesic connecting them.
\end{oss}
\begin{oss}
\label{oss: strongtototal}
    Notice that if $C_1,C_2\subset\Sp^n$ are two strongly convex sets such that $C_1\subseteq C_2$, then $C_1$ is totally convex in $C_2$. Indeed, since $C_2$ is contained in a hemisphere, then for every couple of points $p,q\in C_1$, the unique minimal geodesic $\overline{pq}$ connecting $p$ and $q$ is also the unique geodesic connecting $p$ and $q$ contained $C_2$.
\end{oss}

Notice that the definition of totally convex set becomes trivial when $M$ is a compact manifold and we take $C_2=M$. See for instance \cite[Corollary 1]{B81} for the following
\begin{prop}
    Let $M$ be a compact connected Riemannian manifold, and let $C\subseteq M$ be a totally convex set in $M$. Then $C=M$.
\end{prop}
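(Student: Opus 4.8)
The plan is to show that a nonempty totally convex set $C$ is simultaneously open and closed in $M$; since $M$ is connected, this forces $C=M$. Throughout I take $C$ to be closed, as is standard for totally convex sets and as in \cite{bangertper}, so the real content of the statement is the \emph{openness} of $C$. Since $M$ is compact it is complete, hence every geodesic extends to all of $\R$ and, by Hopf--Rinow, any two points are joined by a minimizing geodesic. I will use total convexity in the strong form dictated by \autoref{defi: convex}(e): for $p,q\in C$, \emph{every} geodesic between them, minimizing or not, lies in $C$.

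Suppose for contradiction that $C$ is not open, so that $\emptyset\neq C\subsetneq M$. The function $\rho(x)=d(x,C)$ is continuous and nonnegative, and vanishes exactly on $C$; by compactness it attains a maximum $r^*>0$ at some point $x^*\notin C$. The goal is to produce a single geodesic whose two endpoints lie in $C$ and which passes through $x^*$: total convexity then gives $x^*\in C$, contradicting $\rho(x^*)=r^*>0$.

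To build such a geodesic I would analyze the farthest point $x^*$. Let $q_1,\dots,q_m\in C$ be the points of $C$ realizing the distance $r^*$ to $x^*$, and let $v_i\in T_{x^*}M$ be the unit velocity at $x^*$ of the minimizing geodesic arriving from $q_i$. A first-variation computation gives, for $w\in T_{x^*}M$, the one-sided derivative $D_w\rho(x^*)=\min_i\langle v_i,w\rangle$, so maximality of $\rho$ at $x^*$ is equivalent to the closest directions positively spanning, i.e. $0\in\operatorname{conv}\{v_1,\dots,v_m\}$. Since the $v_i$ are unit vectors whose convex hull contains the origin, some pair satisfies $\langle v_i,v_j\rangle<0$: indeed, if all pairwise products were nonnegative, writing $0=\sum_k\lambda_k v_k$ would give $0=\bigl|\sum_k\lambda_k v_k\bigr|^2\ge\sum_k\lambda_k^2>0$. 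For such a pair, moving away from $x^*$ along the continuation $s\mapsto\exp_{x^*}(s\,v_i)$ of the geodesic arriving from $q_i$ strictly decreases the distance to $q_j$, hence $\rho$ decreases. Continuing this geodesic and tracking it by compactness until it meets $C$ again at a point $q'$, the segment from $q_i$ to $q'$ passes through $x^*$ and has both endpoints in $C$, which is the desired contradiction.

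The main obstacle is exactly this last step: upgrading the infinitesimal decrease of $\rho$ to the global conclusion that the extended geodesic actually returns to $C$. This is where the genuine convexity theory enters and is the reason the result is quoted from \cite[Corollary 1]{bangertper}: Bangert's monotonicity of the distance-to-$C$ function along geodesics leaving the normal cone rules out an interior maximum of $\rho$ off $C$, which is precisely the obstruction above. Once openness is established, the closedness of $C$ together with the connectedness of $M$ immediately yields $C=M$.
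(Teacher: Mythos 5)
The paper itself gives no argument for this proposition---it is quoted verbatim from \cite[Corollary 1]{bangertper}---so the only question is whether your proposal stands on its own. It does not: there is a genuine gap, and it sits exactly at the step you flag yourself. Your strategy (take $x^*$ maximizing $\rho=d(\cdot,C)$, use the first-variation condition to find two foot-point directions $u_i,u_j$ at an obtuse angle, extend the geodesic from $q_i$ through $x^*$, and wait for it to re-enter $C$) is sound only up to the re-entry claim. The inequality $\langle v_i,v_j\rangle<0$ gives a strictly negative \emph{one-sided derivative of $\rho$ at $s=0$} along the extension; it says nothing about $s>0$. The extended geodesic may see $\rho$ decrease briefly and then increase again, and compactness of $M$ only gives accumulation points of the ray, not that it ever meets $C$. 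Note that everything you prove before this point is true for \emph{any} closed set $C\ne M$ (e.g.\ a small disk in a flat torus, where the farthest point and the obtuse pair of directions exist but no geodesic through the farthest point need return to the disk): total convexity enters your argument only \emph{after} the geodesic is assumed to have returned, so the hypothesis is never actually used where the difficulty lies. Finally, the sentence invoking ``Bangert's monotonicity of the distance-to-$C$ function along geodesics leaving the normal cone'' does not repair this: no such statement is formulated or proved anywhere in your argument or in the portion of \cite{bangertper} quoted in this paper (whose Theorem~1 is the monotonicity of \emph{perimeter}), and appealing to the source of the very corollary being proved makes the argument circular rather than complete.

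Two smaller points, both fixable. First, the set of foot points of $x^*$ on $C$ need not be finite (take $C$ a geodesic ball in $\Sp^n$ and $x^*$ the antipode of its center); you should work with the compact set of unit directions of minimizing geodesics from $x^*$ to $C$, for which the first-variation formula and the positive-spanning condition still hold, and extract a finite subfamily containing $0$ in its convex hull by Carath\'eodory before running the inner-product argument. Second, maximality of $\rho$ at $x^*$ \emph{implies} $0\in\operatorname{conv}\{v_1,\dots,v_m\}$ but is not equivalent to it; only the implication is needed, so replace ``is equivalent to'' accordingly.
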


\begin{oss}
    Notice that if $C$ is strongly convex, then it is connected and locally convex. 

    Notice also that if $C_1$ is strongly convex and $C_1\subset C_2$ is totally convex in $C_2$, then $C_1$ is strongly convex.
\end{oss}
In $\Sp^n$, open, connected, locally convex sets contained in a hemisphere have to be strongly convex. Indeed, we can characterize weak convexity with some geometric properties of the boundary. Let us introduce the notion of supporting element (see \cite{CG72, A78}). 
\begin{defi}
    Let $M$ be a Riemannian manifold, and let $C\subseteq M$ be an open set. Let $p\in\partial C$, and for some $\nu\in T_pM$ define
    \[
        H_p=\Set{\xi \in T_p M | \scalar{\nu}{\xi}< 0}.
    \]
    We say that:
    \begin{enumerate}[label=(\roman*)]
     \item  the half-space $H_p$ is a \emph{supporting element} for $C$ in $p$ if for every $q\in \mathring{C}$ and for every minimal geodesic 
     \[
        \gamma:[0,1]\to M
    \]
    such that $\gamma(0)=p$  and $\gamma(1)=q$, we have $\gamma'(0)\in H_p$;
    \item the half-space $H_p$ is a \emph{locally supporting element} for $C$ in $p$ if there exists a neighbourhood $U$ of $p$ such that $H_p$ is a supporting element for $U\cap C$ in $p$.
    \end{enumerate}
\end{defi}
Let $M$ be a Riemannian manifold, and for every $p\in M$, let $\cut(p)$ be the cut-locus of $p$. We refer to \cite[Proposition 2]{A78} for the following result.
\begin{prop}
\label{prop: weakchar}
    Let $M$ be a Riemannian manifold, and let $C\subset M$ be connected and open. The set $C$ is weakly convex if and only if for every point $p\in \partial C$ there exists a locally supporting element and $C\setminus\cut(p)$ is connected.
\end{prop}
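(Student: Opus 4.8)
I would establish the two implications separately: the forward direction is essentially a repackaging of local information, whereas the reverse direction — passing from purely local boundary data to the global statement that any two points of $C$ are joined by a minimal geodesic inside $C$ — is where the real work lies.

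\textbf{(Necessity.)} Assume $C$ is weakly convex and fix $p\in\partial C$. Since $p\notin\mathring C$, the supporting cone $\mathcal{C}_C(p)$ is a convex cone properly contained in $T_pM$, convexity being inherited from the weak convexity of $C$ in a small normal ball around $p$, where the exponential chart turns geodesics through $p$ into straight rays. I would then select a suitable $\nu\in\mathcal{C}_C(p)^*$ and set $H_p=\Set{\xi\in T_pM | \scalar{\nu}{\xi}<0}$; by construction every initial direction of a minimal geodesic running from $p$ into $\mathring C$ lies in $H_p$, so $H_p$ is a locally supporting element. For the connectedness of $C\setminus\cut(p)$, recall that $\exp_p$ restricts to a diffeomorphism of the open segment domain onto $M\setminus\cut(p)$, and that off the cut locus the minimal geodesic from $p$ to any point is unique. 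Weak convexity forces, for each $q\in C\setminus\cut(p)$, the whole open minimal geodesic $\exp_p(tv_q)$, $t\in(0,1]$, to lie in $C$ (the unique minimizer must be the one provided by convexity). Hence $\exp_p^{-1}(C\setminus\cut(p))$ is star-shaped about the origin, therefore connected, and so is its homeomorphic image $C\setminus\cut(p)$.

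\textbf{(Sufficiency.)} Assume now that every $p\in\partial C$ carries a locally supporting element and that $C\setminus\cut(p)$ is connected. Fix such a $p$ and set
\[
A_p=\Set{q\in C\setminus\cut(p) | \text{the unique minimal geodesic from } p \text{ to } q \text{ lies in } C}.
\]
I claim $A_p$ is nonempty, open, and closed in $C\setminus\cut(p)$. The behaviour near $p$ is controlled by the supporting element: geodesics leaving $p$ toward nearby interior points enter $\mathring C$ and remain there for a while, so a full punctured neighbourhood of $p$ belongs to $A_p$, giving nonemptiness. Openness follows from the continuous dependence of minimal geodesics on their endpoints away from the cut locus: if $\gamma_q\subset C$, then the compact arc $\gamma_q([\eps,1])$ stays in the open set $C$ under small perturbations of $q$, while the initial arc stays in $C$ because of the supporting element at $p$.

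The crux is closedness. Let $q_n\in A_p$ with $q_n\to q\in C\setminus\cut(p)$; the geodesics $\gamma_{q_n}$ converge uniformly to the unique minimal geodesic $\gamma_q$, whose open part lies a priori only in $\overline C$. I must exclude that $\gamma_q$ meets $\partial C$ at an interior time $s\in(0,1)$. At such a touching point $r=\gamma_q(s)\in\partial C$ there is a locally supporting element $H_r$; since the points of $\gamma_q$ just before and just after $r$ are limits of interior points and can be shown to lie in $\mathring C$, the two opposite directions $\pm\gamma_q'(s)$ would both be initial directions of minimal geodesics into $\mathring C$, hence both in the open half-space $H_r$, which is impossible. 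Thus $\gamma_q((0,1])\subset C$ and $q\in A_p$. Being a nonempty clopen subset of the connected space $C\setminus\cut(p)$, we conclude $A_p=C\setminus\cut(p)$, so every point of $C$ off $\cut(p)$ is joined to $p$ by a minimal geodesic inside $C$. Points of $\cut(p)\cap C$ are then recovered by approximation through $C\setminus\cut(p)$ and passage to the limit, and a final limiting argument moving $p$ into the interior upgrades the statement to arbitrary pairs in $C$, yielding weak convexity.

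\textbf{Main obstacle.} The delicate point is exactly the closedness step — preventing the limiting minimal geodesic from running into, or along, $\partial C$. The locally supporting element supplies the mechanism (two opposite directions cannot both point into $\mathring C$), but one must first verify that the neighbouring points of the limit geodesic genuinely lie in the interior rather than on the boundary; this is where local convexity near each boundary point has to be combined with the topological input that $C\setminus\cut(p)$ is connected, so that the good set $A_p$ cannot be a proper clopen piece.
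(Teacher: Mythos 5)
The paper does not actually prove this proposition: it is quoted from \cite[Proposition 2]{alexander}, so there is no internal proof to compare against. Judged on its own, your outline has the right general flavour — a clopen argument on $C\setminus\cut(p)$, with the locally supporting element used to forbid a limit geodesic from touching $\partial C$ — but it contains two genuine gaps, one structural and one at the step you yourself flag as delicate.

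The structural gap: weak convexity is a statement about pairs of points of the \emph{open} set $C$, while your clopen argument is based at a boundary point $p\in\partial C$ and, even if completed, only joins $p$ to points of $C\setminus\cut(p)$. The concluding ``final limiting argument moving $p$ into the interior'' cannot work as written, because for an interior base point $q_1$ you have neither a supporting element at $q_1$ nor the hypothesis that $C\setminus\cut(q_1)$ is connected — the connectedness assumption is made only for boundary points — so there is no connected ambient space on which to run the clopen argument. Passing to interior-to-interior pairs needs a different mechanism (for instance: use the connectedness of $C\setminus\cut(p)$ to upgrade the \emph{local} support at each $p\in\partial C$ to \emph{global} support of $C\setminus\cut(p)$, and then show that a minimal geodesic between two interior points cannot cross a globally supported boundary point), and your outline does not supply it.

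The second gap is the closedness step. The limit geodesic $\gamma_q$ lies a priori only in $\overline C$ and nothing excludes that it runs \emph{along} $\partial C$ near the touching point $r$; in that case $\gamma_q(s\pm\eps)$ need not lie in $\mathring C$, the locally supporting element at $r$ says nothing about the directions $\pm\gamma_q'(s)$, and the contradiction does not fire. Note also that one endpoint of $\gamma_q$ is $p\in\partial C\not\subset\mathring C$, so even the backward direction is not covered by the definition of supporting element without a further approximation. You name this obstacle but do not resolve it, and the appeal to connectedness of $C\setminus\cut(p)$ does not by itself repair it. Smaller but real issues of the same kind occur elsewhere: in the necessity direction you apply weak convexity to the pair $(p,q)$ with $p\notin C$ (since $C$ is open), and the convexity and properness of the supporting cone — needed to produce a nonzero $\nu$ — involve geodesics \emph{not} passing through $p$, which are not straight in the normal chart; in the nonemptiness of $A_p$, a supporting element controls only initial directions, not that the whole short geodesic stays inside $C$. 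These missing steps are precisely the substance of Alexander's proof, which the paper deliberately outsources.
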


We also have that a locally supporting element always exists for open, locally convex sets. Indeed, Cheeger and Gromoll in \cite[Theorem 1.6, Lemma 1.7]{CG72} proved a result summarized in \autoref{teor: supporting} (see also the comments between Lemma 1.7 and Proposition 1.8); notice that Cheeger and Gromoll work with closed sets, but if $C$ is locally convex, then also $\bar{C}$ is a locally convex set, and $\partial C=\partial\bar{C}$. On the other hand, by definition, a supporting element for $\bar{C}$ is also a supporting element for $C$.
\begin{teor}
\label{teor: supporting}
    Let $M$ be a Riemannian manifold of dimension $n$, and let $C\subseteq M$ be a non-empty, open, locally convex set. Then $\partial C$ is an embedded $(n-1)$-dimensional topological submanifold of $M$, and it has a supporting element in every point $p\in\partial C$.
\end{teor}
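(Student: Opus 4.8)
The plan is to work inside a single normal chart and transcribe the classical theory of Euclidean convex bodies. Fix $p\in\partial C$. Using local convexity, choose $\eps>0$ so small that $C_\eps:=C\cap B_\eps(p)$ is strongly convex and $\exp_p$ restricts to a diffeomorphism from $\Set{v\in T_pM | \abs{v}<\eps}$ onto $B_\eps(p)$. Since $B_\eps(p)$ is open, $\partial C\cap B_\eps(p)=\partial C_\eps\cap B_\eps(p)$, so both assertions are local and it suffices to establish them for $C_\eps$ near $p$.

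I would first produce the supporting element by analysing the supporting cone $\mathcal{C}_C(p)$. The crucial point is that $\mathcal{C}_C(p)$ is a \emph{convex} cone, properly contained in $T_pM$. Properness is clear: were it all of $T_pM$, a compactness argument on the unit sphere of $T_pM$ would place an entire metric ball around $p$ inside $\mathring C$, contradicting $p\in\partial C$. For convexity, take $\xi,\eta\in\mathcal{C}_C(p)$ and $\zeta=(1-s)\xi+s\eta$; for small $t>0$ the radial geodesics give $\exp_p(t\xi),\exp_p(t\eta)\in\mathring C_\eps$, and strong convexity forces the connecting minimal geodesic into $C_\eps$. Read in normal coordinates, the two radial geodesics are the straight rays $t\mapsto t\xi$ and $t\mapsto t\eta$, while the connecting geodesic deviates from the straight segment by $O(t^2)$, so $t\zeta$ lies within $O(t^2)$ of $C_\eps$; together with the openness of $\mathring C_\eps$ this yields $\zeta\in\mathcal{C}_C(p)$. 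Once $\mathcal{C}_C(p)$ is known to be a proper convex cone, the hyperplane separation theorem in the vector space $T_pM$ provides $\nu\neq0$ with $\scalar{\nu}{\xi}\le0$ for all $\xi\in\mathcal{C}_C(p)$, i.e. $-\nu\in\mathcal{C}_C(p)^*$. The half-space $H_p=\Set{\xi\in T_pM | \scalar{\nu}{\xi}<0}$ is then the desired supporting element: any minimal geodesic $\gamma$ from $p$ to an interior point $q$ has initial velocity in the topological interior of $\mathcal{C}_C(p)$ (the open segment from $p$ to $q$ lies in $\mathring C$), whence $\scalar{\nu}{\gamma'(0)}<0$.

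For the submanifold structure I would use a radial parametrization. Choose an interior point $o\in\mathring C_\eps$ and pass to normal coordinates centred at $o$. Strong convexity makes the minimal geodesic from $o$ to any point of $C_\eps$ lie in $C_\eps$, so $\exp_o^{-1}(C_\eps)$ is star-shaped about the origin and each straight ray meets it in a connected segment. The resulting radial function $\rho:\Sp^{n-1}\to(0,\infty)$, assigning to a direction its exit value, is lower semicontinuous by the openness of $\mathring C_\eps$ and upper semicontinuous because a geodesic cannot re-enter a convex set, hence continuous. Therefore $\theta\mapsto\exp_o(\rho(\theta)\theta)$ is a homeomorphism of $\Sp^{n-1}$ onto $\partial C_\eps$; restricting it to the open subset corresponding to $\partial C\cap B_\eps(p)$ exhibits $\partial C$ near $p$ as an embedded $(n-1)$-dimensional topological submanifold of $M$.

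The main obstacle is the convexity of the supporting cone. In $\R^n$ this is immediate, but here $\exp_p$ does not carry geodesics to straight lines, so strong convexity of $C_\eps$ does not translate verbatim into convexity of its preimage; the $O(t^2)$ distortion of the connecting geodesics in the chart must be controlled carefully to guarantee that the interpolated direction $\zeta$ does not escape the cone. The remaining ingredients—properness, the separation step, and the radial homeomorphism—are then faithful transcriptions of the Euclidean convex-body arguments, the only additional care being the semicontinuity estimates for $\rho$ up to $\partial C_\eps$.
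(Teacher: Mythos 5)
The paper does not prove this statement at all: it is quoted from Cheeger--Gromoll \cite[Theorem 1.6, Lemma 1.7]{cheegergromoll} (together with the remark that one may pass from $C$ to $\bar C$), so your proposal is a from-scratch reconstruction rather than a variant of an argument in the text. The overall strategy --- localize, show the supporting cone is a proper convex cone, separate, and parametrize $\partial C$ radially from an interior point --- is the right shape, and the properness, separation, and radial-homeomorphism steps are essentially sound modulo standard lemmas (e.g.\ that the geodesic from an interior point to a point of $C$ is interior except possibly at its endpoint).

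However, the step you yourself identify as the main obstacle --- convexity of $\mathcal{C}_C(p)$ --- is not actually proved, and the inference you use to close it is invalid. Knowing that $\exp_p(t\zeta)$ lies within $O(t^2)$ (in fact $O(t^3)$) of the connecting geodesic $\sigma_t\subset\mathring{C}_\eps$ does not place it \emph{in} $\mathring{C}_\eps$: openness gives no quantitative lower bound on how far $\mathring{C}_\eps$ extends around a point of $\sigma_t$, and near the boundary point $p$ the set can be thinner, in the directions transverse to the two-dimensional geodesic triangle swept out by the $\sigma_{t'}$, than any prescribed power of $t$. In the Euclidean model no estimate is needed because $p+t\zeta$ lies \emph{exactly} on the chord; here the curvature-induced error is precisely what must be absorbed, and your argument supplies no mechanism for absorbing it (one standard fix is to avoid the cone altogether and extract the normal $\nu$ as a limit of directions $\exp_{y_k}^{-1}(x_k)$ from exterior points $x_k\to p$ to their metric projections $y_k$, which is available since locally convex sets have positive reach by Walter's theorem, already cited in the paper). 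A secondary mismatch: the paper's definition of supporting element quantifies over \emph{all} $q\in\mathring{C}$ and all minimal geodesics of $M$ from $p$ to $q$; for a set that is only locally convex such a geodesic need not initially enter $C$ when $q$ is far from $p$, so your parenthetical ``the open segment from $p$ to $q$ lies in $\mathring C$'' only justifies a \emph{locally} supporting element, not the supporting element claimed in the statement.
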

Joining \autoref{prop: weakchar} and \autoref{teor: supporting}, we get on the sphere $\Sp^n$ the following.
\begin{prop}
\label{prop: localtostrong}
    Let $C\subset\Sp^n$ be a closed, connected, locally convex set contained in an open hemisphere. Then $C$ is strongly convex.
\end{prop}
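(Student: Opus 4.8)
The plan is to prove that $C$ is \emph{weakly} convex and then invoke \autoref{oss: weakinhemi}: since $C$ is contained in an open hemisphere, weak convexity immediately upgrades to strong convexity. To be able to apply \autoref{prop: weakchar}, which characterizes weak convexity for \emph{open} connected sets, I would first pass to the interior $U=\mathring{C}$. By \autoref{teor: supporting} the boundary $\partial C$ is an embedded $(n-1)$-dimensional submanifold of $\Sp^n$, so that $C$ is a topological manifold with boundary; since $C$ is connected it follows that $U$ is non-empty, open, connected and dense in $C$, and it is still locally convex with $\partial U=\partial C$.

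Next I would verify the two hypotheses of \autoref{prop: weakchar} for $U$. The first, the existence of a locally supporting element at every $p\in\partial U=\partial C$, is exactly the content of \autoref{teor: supporting} (a supporting element is in particular a locally supporting one). The second is that $U\setminus\cut(p)$ is connected for each such $p$. Here I would exploit the special geometry of the round sphere: for every $p\in\Sp^n$ the cut locus is the single antipodal point, $\cut(p)=\{-p\}$. Since $U$ lies in an open hemisphere $H$ and $p\in\partial U\subset\overline{U}\subset\overline{H}$, the antipode $-p$ lies in the closed complementary hemisphere $\overline{-H}$, which is disjoint from $H\supseteq U$; hence $-p\notin U$, so that $U\setminus\cut(p)=U$ is connected. \autoref{prop: weakchar} then gives that $U$ is weakly convex.

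Finally I would upgrade weak convexity from $U$ to $C=\overline{U}$. Given $p,q\in C$, I choose $p_k,q_k\in U$ with $p_k\to p$ and $q_k\to q$; each pair is joined by a minimal geodesic $\gamma_k\subseteq U$. As $U$ sits inside an open hemisphere, minimal geodesics between points are unique and depend continuously on their endpoints, so $\gamma_k$ converges to the minimal geodesic $\overline{pq}$, which therefore lies in $\overline{U}=C$. Thus $C$ is weakly convex, and \autoref{oss: weakinhemi} yields that $C$ is strongly convex. I expect the two delicate points to be the passage to the interior (ensuring that $U$ is non-empty, connected and dense, which relies on the manifold-with-boundary structure coming from \autoref{teor: supporting}) and the verification that removing the cut locus does not disconnect $U$; both of these hinge crucially on the hypothesis that $C$ lies inside an open hemisphere, consistently with \autoref{oss: convexinhemi}.
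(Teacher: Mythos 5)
Your proposal is correct and follows essentially the same route as the paper: pass to the interior, apply \autoref{teor: supporting} to obtain supporting elements, observe that the hemisphere hypothesis makes $\mathring{C}\setminus\cut(p)=\mathring{C}$ so that \autoref{prop: weakchar} yields weak convexity, upgrade to strong convexity via \autoref{oss: weakinhemi}, and recover $C=\overline{\mathring{C}}$ at the end. The only cosmetic differences are that the paper cites Cheeger--Gromoll directly for the connectedness of $\mathring{C}$ and for $C=\overline{\mathring{C}}$ where you argue these by hand, and it performs the weak-to-strong upgrade before, rather than after, passing to the closure.
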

\begin{proof}
    The local convexity of $C$ and the fact that it is connected ensure that $\mathring{C}$ is connected (see for instance \cite[Lemma 1.5]{CG72}). Therefore, we may apply \autoref{teor: supporting} to $\mathring{C}$, so that every point $p\in\partial\mathring{C}$ admits a supporting element. Moreover, since $C$ is contained in a hemisphere, we also have that
    \[
        \mathring{C}\setminus \cut(p)=\mathring{C},
    \]
    which is connected. Therefore, we can apply \autoref{prop: weakchar}, and get that $\mathring{C}$ is weakly convex, and, in particular, as in \autoref{oss: weakinhemi}, strongly convex. Finally, observing that closedness and local convexity ensure $C=\bar{\mathring{C}}$ (see \cite[Theorem 1.6]{CG72}), then $C$ inherits the strong convexity of $\mathring{C}$.

\end{proof}
The following theorem is due to Bangert in \cite[Theorem 1]{B81}.
\begin{teor}[Monotonicity of perimeter]
\label{teor: monotonicity}
    Let $M$ be a Riemannian manifold, and let $C_1,C_2\subseteq M$ such that $C_1$ is totally convex in $C_2$, and $\mathring{C_1}\ne\emptyset$. Assume moreover that $C_2$ has strongly Lipschitz boundary, and $\abs{C_2\setminus C_1}<+\infty$. Then
    \[
        \Hn(\partial C_1)\le \Hn(\partial C_2).
    \]
\end{teor}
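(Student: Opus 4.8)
The plan is to realize the inequality through the nearest-point projection of $\overline{C_2}$ onto $C_1$, which I will show is a $1$-Lipschitz map whose restriction to $\partial C_2$ surjects onto $\partial C_1$; the conclusion then follows from the area formula. First I would define, for $p\in\overline{C_2}$, the projection $\pi(p)$ to be a point of $C_1$ realizing the distance $d(p,C_1)=\inf_{q\in C_1}d(p,q)$. Since $C_1$ is totally convex in $C_2$ and (as in \autoref{oss: convexinhemi} and \autoref{oss: strongtototal}) everything sits inside a region where minimal geodesics between points of $C_1$ are unique, the convexity of $p\mapsto d(p,C_1)$ along geodesics of $C_2$ forces this nearest point to be unique, so that $\pi\colon\overline{C_2}\to C_1$ is well defined.

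The heart of the argument, and the step I expect to be the main obstacle, is to prove that $\pi$ is non-expanding, that is
\[
d(\pi(p),\pi(p'))\le d(p,p') \qquad \text{for all } p,p'\in\overline{C_2}.
\]
This is exactly where total convexity of $C_1$ in $C_2$ is essential: it guarantees that $d(\cdot,C_1)$ is convex along every geodesic of $C_2$ (the property recorded in \autoref{teor: convexdist}), and convexity of the distance function is what yields non-expansiveness of the metric projection, as in the model case of Hadamard spaces. Confining the geometry to an open hemisphere is what lets one sidestep the curvature and cut-locus obstructions that would otherwise break this estimate on $\Sp^n$. In particular $\pi$ is Lipschitz, hence $\Hn$-almost everywhere differentiable with Jacobian $J\pi\le 1$.

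Next I would establish the surjectivity $\pi(\partial C_2)\supseteq\partial C_1$. Fix $y\in\partial C_1$; since $\mathring{C_1}$ is locally convex, \autoref{teor: supporting} provides a supporting element at $y$, hence an outward unit direction $\nu\in T_yM$. Consider the geodesic $t\mapsto\exp_y(t\nu)$: for small $t>0$ it leaves $C_1$ but remains in $\mathring{C_2}$, because $C_1\subseteq\mathring{C_2}$, and since $\abs{C_2\setminus C_1}<+\infty$ it must cross $\partial C_2$ at a first time $t_0$, at a point $z=\exp_y(t_0\nu)\in\partial C_2$. As this geodesic meets $C_1$ orthogonally at $y$ and stays minimizing up to $t_0$, it realizes $d(z,C_1)$, whence $\pi(z)=y$. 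This proves that $\pi|_{\partial C_2}$ surjects onto $\partial C_1$.

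Finally, applying the area formula (\autoref{teor: area}) to the Lipschitz map $\pi|_{\partial C_2}\colon\partial C_2\to\partial C_1$ between $(n-1)$-dimensional sets, with $\varphi\equiv 1$, gives
\[
\int_{\partial C_2}J\pi\,d\Hn=\int_{\partial C_1}\#\big(\pi|_{\partial C_2}^{-1}(y)\big)\,d\Hn(y)\ge\Hn(\partial C_1),
\]
the inequality being a consequence of surjectivity. Since $J\pi\le 1$, the left-hand side is at most $\Hn(\partial C_2)$, and combining the two bounds yields $\Hn(\partial C_1)\le\Hn(\partial C_2)$. The two technical points to verify are that $\partial C_1$ is $(n-1)$-rectifiable, so that the area formula applies, which is granted by \autoref{teor: supporting} (it makes $\partial C_1$ an embedded topological $(n-1)$-submanifold), and that the outward geodesics used for surjectivity remain minimizing up to their exit time, which is controlled by keeping all sets inside an open hemisphere.
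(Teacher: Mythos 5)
Your argument is the Euclidean one transplanted to the manifold setting, and this is precisely what the paper warns against: \autoref{teor: monotonicity} is not proved in the paper but quoted from Bangert, and immediately after its statement the authors remark that the Euclidean proof ``only relies on proving that the projection onto the convex set $C_1$ is a $1$-Lipschitz function \dots, while the Riemannian case requires a different proof.'' The fatal step is exactly your central claim that the metric projection $\pi$ onto $C_1$ is non-expanding. That is a theorem in Hadamard (nonpositively curved) spaces, but it is false in positive curvature, which is the case the paper actually needs ($M=\Sp^n$). Concretely, on $\Sp^2$ let $C_1$ be a thin strongly convex neighbourhood of an equatorial geodesic arc, and let $p,p'$ be the two points at latitude $\phi>0$ and longitudes $\pm\delta$, with $\delta$ small enough that their meridians meet the arc in its interior. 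Then
\[
d(p,p')=\arccos\bigl(\cos^2\phi\,\cos(2\delta)+\sin^2\phi\bigr)<2\delta,
\]
while $\pi(p)$ and $\pi(p')$ lie near the equator at longitude separation $2\delta$, hence at distance arbitrarily close to $2\delta$ as the neighbourhood degenerates. So $\pi$ expands, $J\pi\le1$ fails, and the area-formula estimate collapses. The appeal to \autoref{teor: convexdist} does not rescue this: in positive curvature that theorem gives convexity of $\rho-\log(-\rho)$ \emph{inside} $C$, not convexity of $d(\cdot,C_1)$ outside $C_1$, and the latter is in general false on the sphere. Restricting to an open hemisphere does not help either, since the example above lives entirely in one.

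Two further problems, secondary to the one above. The theorem is stated for an arbitrary Riemannian manifold $M$, so hemisphere reductions and uniqueness of minimal geodesics are not available in general (uniqueness of the nearest point is a positive-reach statement, Walter's theorem, not a consequence of convexity of the distance). And the surjectivity step is incomplete: finiteness of $\abs{C_2\setminus C_1}$ does not force the outward normal geodesic to meet $\partial C_2$, nor need that geodesic remain the minimizer of the distance to $C_1$ up to its exit time, since focal points of $\partial C_1$ may occur before $\partial C_2$ is reached. The actual proof, due to Bangert, proceeds along genuinely different lines.
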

The proof of this theorem in the Euclidean case only relies on proving that the projection onto the convex set $C_1$ is a $1$-Lipschitz function (see for instance \cite[Proposition 5.3]{B11}), while the Riemannian case requires a different proof. Even if the monotonicity theorem requires some regularity on the external set $C_2$, we can still prove that this is not restrictive in the case in which $C_2$ is locally convex. Indeed, we have the following result due to Walter in \cite[Theorem 6.1]{W76}.
\begin{teor}
\label{teor: regularity}
    Let $M$ be a Riemannian manifold, and let $C\subset M$ be a closed, locally convex set. Then $C$ has strongly Lipschitz boundary.
\end{teor}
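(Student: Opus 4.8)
The plan is to fix an arbitrary $p\in\partial C$ and produce, in a suitable $C^1$ chart centered at $p$, a Lipschitz graph description of $\partial C$; since the claim is local and $C=\overline{\mathring C}$ by local convexity and closedness (Cheeger--Gromoll, as already used in the proof of \autoref{prop: localtostrong}), this is exactly what is required. First I would take the chart $\varphi=R\circ\exp_p^{-1}$ on a small geodesic ball $B=B_\eps(p)$, where $\exp_p^{-1}$ identifies $B$ with an open subset of $T_pM\cong\R^n$ (so $p\mapsto 0$ and $g_{ij}(0)=\delta_{ij}$) and $R$ is a linear isometry of $T_pM$ chosen so that the outward normal $\nu$ to a supporting element of $C$ at $p$ (which exists by \autoref{teor: supporting}) becomes the last coordinate axis $e_n$. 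Shrinking $\eps$, I may assume that $C\cap B$ is strongly convex, that $\eps$ is below the injectivity radius, and that on $\overline B$ the metric and its geodesics are uniformly comparable to the Euclidean ones: there is a constant $\Lambda>1$ with $\Lambda^{-1}\lvert v\rvert\le g_x(v,v)^{1/2}\le\Lambda\lvert v\rvert$, and every minimal geodesic deviates from the corresponding Euclidean segment by an amount controlled by a curvature bound on $\overline B$. These normal-coordinate estimates are the only place where Riemannian geometry genuinely enters.

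Then I would establish a uniform two-sided cone condition in the chart. For the \emph{interior} cone: since $\mathring C\neq\emptyset$, fix a geodesic ball $B_\rho(y_0)\subset\mathring C\cap B$ with $y_0$ close to $p$; for every $x\in\partial C\cap B$ the strong convexity of $C\cap B$ forces all minimal geodesics from $x$ to points of $B_\rho(y_0)$ to lie in $C$, so their union is a geodesic cone $K_x\subset C$ with apex $x$. By the metric comparison above, $K_x$ contains a genuine Euclidean solid cone with apex $x$, axis along the coordinate direction from $x$ towards $y_0$, and half-aperture bounded below by some $\theta_0>0$ depending only on $\rho$, on an upper bound for $d(x,y_0)$, and on $\Lambda$; hence $\theta_0$ is uniform for $x$ in a slightly smaller ball $B'$. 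For the \emph{exterior} cone: at each $x\in\partial C\cap B'$ pick a supporting element with outward normal $\nu_x$; if $\xi\in T_xM$ satisfies $\langle\nu_x,\xi\rangle>0$, then for small $t>0$ the point $\exp_x(t\xi)$ cannot lie in $\mathring C$, for otherwise the minimal geodesic realizing it would have initial velocity $\xi\notin H_x$, contradicting the supporting-element property; translating this through the metric comparison yields a Euclidean exterior cone at $x$ opening towards $\nu_x$.

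To align these cones I would use that, for locally convex sets, the supporting normals vary upper-semicontinuously: trapped between the interior cones at nearby points, the directions $\nu_x$ cluster around $\nu$ as $x\to p$, so after a final shrinking to a ball $B''$ every $\nu_x$ lies within angle $\theta_0/2$ of $e_n$, and the interior-cone axes within $\theta_0/2$ of $-e_n$ (the elementary angle bookkeeping, using that interior directions lie in $\{\langle\nu,\cdot\rangle\le 0\}$, is routine and not restrictive). With a uniform two-sided cone condition of aperture $\theta_0$ and axes clustered about $\mp e_n$, the standard Euclidean lemma applies: each line parallel to $e_n$ meets $\partial C\cap B''$ in exactly one point, and $\partial C\cap B''=\{(x',f(x')):x'\in U'\}$ is the graph of a function $f:U'\subset\R^{n-1}\to\R$ with Lipschitz constant $\le\cot\theta'$, where $\theta'$ is determined by $\theta_0$. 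Since this graph is expressed directly in the chart $\varphi$, it exhibits $\partial C$ as a strongly Lipschitz submanifold near $p$; as $p$ was arbitrary and $C=\overline{\mathring C}$, the set $C$ has strongly Lipschitz boundary.

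The main obstacle is precisely the passage from the \emph{topological} manifold structure of \autoref{teor: supporting} to a \emph{Lipschitz} one, that is, transporting the Euclidean cone-condition argument into the geodesically convex setting with \emph{uniform} constants. Pointwise one easily obtains an interior geodesic cone (from convexity) and an exterior half-cone (from the supporting element), but one must control, simultaneously over all boundary points in a fixed neighborhood, both the deviation of geodesic cones from straight Euclidean cones and the angular spread of the supporting normals $\nu_x$; both are quantified by the normal-coordinate comparison on $\overline B$ together with upper semicontinuity of supporting elements, and it is the \emph{uniformity} of the resulting aperture $\theta_0$ and angular clustering that delivers a single Lipschitz constant.
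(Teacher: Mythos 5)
The paper offers no proof of this statement; it is quoted as Walter's Theorem 6.1 from \cite{walterregularity}, so there is nothing internal to compare against. Your architecture --- normal coordinates at $p$, a uniform interior geodesic cone obtained by joining each nearby boundary point $x$ to a fixed ball $B_\rho(y_0)\subset\mathring C$, an exterior half-space of directions coming from the supporting element at $x$ (\autoref{teor: supporting}), and the standard two-sided cone criterion for a local Lipschitz graph --- is a legitimate and essentially standard route, and the Riemannian ingredients you invoke (metric comparison in normal coordinates, $C=\overline{\mathring C}$) are all available. One hypothesis should be made explicit: the conclusion requires $\mathring C\neq\emptyset$ (strongly Lipschitz boundary presupposes $C=\overline{\mathring C}$), which you use tacitly and which is how the theorem is applied in \autoref{cor: monotonicity}.

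The genuine gap is the alignment step. You rotate the chart so that a supporting normal $\nu$ at $p$ becomes $e_n$, and then assert that the supporting normals $\nu_x$ at nearby boundary points ``cluster around $\nu$'' and eventually lie within angle $\theta_0/2$ of $e_n$, with the interior-cone axes within $\theta_0/2$ of $-e_n$. Neither claim holds. At a vertex-type boundary point (already for a geodesic triangle in $\Sp^2$) the normals of the adjacent faces do not converge to any single direction as $x\to p$; upper semicontinuity only forces their limit points into the normal cone at $p$, whose aperture can be close to $\pi/2$, so they certainly need not fall within $\theta_0/2$ of your chosen $\nu$. Likewise the interior-cone axes converge to the direction $a_p$ from $p$ towards $y_0$, which makes an obtuse angle with $\nu$ but need not be close to $-\nu$. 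The repair is to drop $\nu$ as the graph direction and set $e_n:=-a_p$. Then the interior-cone axes cluster about $-e_n$ by continuity, and no clustering of the $\nu_x$ is needed: since every direction $\xi$ in the interior cone at $x$ (half-aperture $\theta_0$ about $a_x$) satisfies $\langle\nu_x,\xi\rangle<0$, the normal $\nu_x$ lies in the polar cone, i.e.\ within angle $\pi/2-\theta_0$ of $-a_x\approx e_n$, so the supporting half-space of exterior directions at $x$ contains a Euclidean cone of half-aperture $\theta_0/2$ about $e_n$, uniformly in $x$. With that substitution --- and noting that the open interior cone minus its apex lies in $\mathring C$, not merely in $C$, which the vertical-line uniqueness argument actually requires --- your proof goes through.
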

Joining \autoref{teor: monotonicity} and \autoref{teor: regularity}, we get:
\begin{cor}
\label{cor: monotonicity}
     Let $C_1,C_2\subseteq \Sp^n$ be two closed strongly convex sets such that $\mathring{C_1}\ne\emptyset$. If $C_1\subseteq C_2$, then
    \[
        \Hn(\partial C_1)\le \Hn(\partial C_2).
    \]
\end{cor}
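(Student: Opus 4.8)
The plan is to package the two previously cited theorems: \autoref{teor: monotonicity} requires four hypotheses on the pair $C_1\subseteq C_2$, and I would verify each in turn before applying it directly to the inclusion at hand.

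First I would establish that $C_1$ is totally convex in $C_2$. Since both sets are strongly convex and $C_1\subseteq C_2$, this is precisely the content of \autoref{oss: strongtototal}: by \autoref{oss: convexinhemi} the set $C_2$ is contained in an open hemisphere, so for any $p,q\in C_1$ the unique minimal geodesic $\overline{pq}$ is in fact the only geodesic joining $p$ and $q$ inside $C_2$, and strong convexity of $C_1$ forces it to remain in $C_1$. Next I would check the remaining conditions. Strong convexity of $C_2$ makes it locally convex, so the closed set $C_2$ has strongly Lipschitz boundary by \autoref{teor: regularity}. The finiteness requirement $\abs{C_2\setminus C_1}<+\infty$ is automatic, since $C_2$ lies in a hemisphere and therefore has finite volume (equivalently, $\Sp^n$ is compact). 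Together with the standing hypothesis $\mathring{C_1}\ne\emptyset$, all assumptions of \autoref{teor: monotonicity} are met, and invoking it yields $\Hn(\partial C_1)\le\Hn(\partial C_2)$.

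I do not anticipate any serious obstacle, as the corollary is essentially a repackaging of \autoref{teor: monotonicity} and \autoref{teor: regularity}. The only step demanding a little care is the passage from the inclusion $C_1\subseteq C_2$ to total convexity: in a general Riemannian manifold this could fail, but on $\Sp^n$ it holds because strongly convex sets sit inside a hemisphere where minimal geodesics between their points are unique, and this is exactly what \autoref{oss: strongtototal} records.
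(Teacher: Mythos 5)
Your proposal is correct and follows exactly the paper's own argument: total convexity of $C_1$ in $C_2$ via \autoref{oss: strongtototal}, strongly Lipschitz regularity of $\partial C_2$ via \autoref{teor: regularity}, and then \autoref{teor: monotonicity}. You merely spell out the finiteness of $\abs{C_2\setminus C_1}$, which the paper leaves implicit since $\Sp^n$ is compact.
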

\begin{proof}
    It is sufficient to notice that, as in \autoref{oss: strongtototal}, $C_1$ is totally convex in $C_2$. Indeed, \autoref{teor: regularity} ensures the strongly Lipschitz regularity of the boundary, and \autoref{teor: monotonicity} applies.
\end{proof}
We now give some definitions of convexity of continuous functions on Riemannian manifolds, see for instance \cite[\S1]{GW76} for a reference on the topic.

\begin{defi}
    Let $M$ be a Riemannian manifold, and let $f:M\to \R$ be a continuous function. We say that:
    \begin{enumerate}[label=(\alph*)]
    \item $f$ is \emph{convex} if for every geodesic $\gamma:[a,b]\to M$ we have $f\circ \gamma$ is convex on $[a,b]$;
    \item $f$ is \emph{strictly convex} if for every $p\in M$ and for every convex function $\varphi\in C^\infty(M)$ there exists an $\eps>0$ such that $f-\eps \varphi$ is convex in a small neighbourhood of $p$.
    \end{enumerate}
\end{defi}
These definitions are related to the geometry of the sublevel sets.
\begin{prop}
\label{prop: sublevelconvex}
    Let $M$ be a Riemannian manifold, and let $f:M\to \R$ be a continuous function. Then:
    \begin{enumerate}[label=(\roman*)]
        \item if $f$ is convex, then for every $t\in\R$ the set $\Set{x\in M | f(x)<t}$ is totally convex in $M$;
        \item assume that $f$ is strictly convex, and $M$ is weakly convex; for every $t\in\R$, if the set $\Set{x\in M | f(x)<t}$ is compact, then it is locally strictly convex.
    \end{enumerate}
    \end{prop}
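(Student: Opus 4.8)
The plan for (i) is direct. Fix $t\in\R$ and put $A=\Set{x\in M | f(x)<t}$. Since $A\subseteq M=\mathring M$, to check that $A$ is totally convex in $M$ (in the sense of \autoref{defi: convex}) I only need to verify that for all $p,q\in A$ and every geodesic $\gamma\colon[a,b]\to M$ with $\gamma(a)=p$, $\gamma(b)=q$, the image stays in $A$. By the definition of a convex function, $f\circ\gamma$ is convex on $[a,b]$; as $f(\gamma(a))<t$ and $f(\gamma(b))<t$, convexity gives
\[
f(\gamma(s))\le \frac{b-s}{b-a}\,f(\gamma(a))+\frac{s-a}{b-a}\,f(\gamma(b))<t\qquad\forall s\in[a,b],
\]
so $\gamma([a,b])\subseteq A$, which is exactly total convexity.

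For (ii) I would work with the closed sublevel set $K=\Set{x\in M | f(x)\le t}$; for $t$ above the minimum of $f$ one has $K=\overline A$ and $f\equiv t$ on $\partial K$. By (i) the set $A$ is totally, hence weakly, convex, and in the relevant setting (where $K$ lies in an open hemisphere) \autoref{prop: localtostrong} makes $K$ strongly convex, so by \autoref{teor: supporting} every $p\in\partial K$ carries a supporting element; equivalently, $K$ lies on the inner side of the totally geodesic hypersphere $T=\partial B(c_{\pi/2},\pi/2)$ through $p$ orthogonal to a chosen inner normal. The crucial step is to convert strict convexity into a quadratic lower bound. Fixing one smooth function $\varphi$ that is strictly convex on a neighbourhood of $K$ with $\nabla^2\varphi\ge c\,\id$ for some $c>0$ (for instance a squared distance from a fixed centre), the definition of strict convexity yields $\eps>0$ such that $g:=f-\eps\varphi$ is convex near a given $p\in\partial K$. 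Writing the subgradient inequality for $g$, Taylor expanding $\varphi$, and using the sum rule $\partial f(p)=\partial g(p)+\eps\,\nabla\varphi(p)$, I obtain, for \emph{every} subgradient $s\in\partial f(p)$,
\[
f(\exp_p v)\ge t+\scalar{s}{v}+\tfrac{\kappa}{2}\,\abs{v}^2+o(\abs{v}^2),\qquad \kappa:=\eps c>0,
\]
for $v$ near $0$ in $T_pM$. Since the inner normals $\nu\in\mathcal{C}_K(p)^*$ are precisely the nonnegative multiples of $-s$ as $s$ ranges over $\partial f(p)$, this one estimate controls every admissible $\nu$ at $p$.

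Given $p$ and $\nu\in\mathcal{C}_K(p)^*$, I set $\hat\nu=\nu/\abs{\nu}$, pick $s\in\partial f(p)$ with $s=-\abs{s}\,\hat\nu$, and take as candidate the geodesic sphere $S_r=\partial B(c_r,r)$ with $c_r=\exp_p(r\hat\nu)$: it passes through $p$, is orthogonal to $\nu$ there, and has second fundamental form $\cot(r)\,\id$ with respect to the inner normal, hence eigenvalues $\cot r>0$ for $r<\pi/2$. Two consequences of the quadratic bound then combine. First, it forbids flat faces: if $K$ met $T$ at a second point $q$, then $\overline{pq}\subseteq K\cap T$ (by strong convexity and total geodesy of $T$), and evaluating the bound at an interior point of $\overline{pq}$ along the segment direction—on which $\scalar{\hat\nu}{\cdot}$ vanishes while the right-hand side is strictly positive—gives a contradiction; thus $K\cap T=\{p\}$. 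Second, the bound yields the local containment $K\cap\exp_p(B_\rho)\subseteq\overline{B(c_r,r)}$ as soon as $\cot r\le \kappa/(2\abs{s})$. I would then run a compactness argument in $r$: were there $r_n\uparrow\pi/2$ and $q_n\in K\setminus\overline{B(c_{r_n},r_n)}$, compactness together with $K\subseteq\overline{B(c_{\pi/2},\pi/2)}$ and $K\cap T=\{p\}$ force $q_n\to p$, contradicting local containment for large $n$. Hence for every $r<\pi/2$ sufficiently close to $\pi/2$ one has $K\subseteq\overline{B(c_r,r)}$ and $S_r\cap K=\{p\}$, so $H:=S_r$ witnesses local strict convexity at $p$ with second fundamental form $\cot(r)\,\id$.

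It remains to produce a single $\delta$. All constants above—$\kappa$, the radius $\rho$, an upper bound for $\abs{s}$, and the distance from the compact set $K\setminus\exp_p(B_\rho)$ to $T$—can be taken independent of $p\in\partial K$ by compactness of $\partial K$, giving a threshold $\bar r<\pi/2$ that works simultaneously for all $p$ and all $\nu\in\mathcal{C}_K(p)^*$; then $\delta:=\cot(\bar r)>0$. The step I expect to be the main obstacle is exactly this uniformity, and most delicately a uniform positive lower bound for $\eps$ (hence $\kappa$), since strict convexity only supplies $\eps$ pointwise; I would settle it by contradiction, showing that boundary points with $\eps\to0$ would, in the limit, produce a geodesic segment along which $f$ is affine, contradicting strict convexity.
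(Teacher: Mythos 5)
Your proof of part \textit{(i)} is exactly the paper's argument: restrict $f$ to a geodesic joining two points of the sublevel set, use convexity of $f\circ\gamma$, and conclude that the strict sublevel set is totally convex. Nothing to add there.

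For part \textit{(ii)} the paper does not give a proof at all: it simply cites Bangert's Lemma 2.4 in \cite{bangertappr} (and only remarks that weak convexity of $M$ guarantees connectedness of the sublevel set). Your attempt to supply a direct argument therefore goes beyond the paper, but as written it has genuine gaps. First, the proposition is stated for an arbitrary Riemannian manifold $M$, while your construction is specific to $\Sp^n$: you invoke hemispheres, the totally geodesic hypersphere $\partial B(c_{\pi/2},\pi/2)$, and geodesic spheres with second fundamental form $\cot(r)\,\id$. None of this is available in general; at best one can use small geodesic spheres, whose second fundamental form is only asymptotically $\tfrac{1}{r}\id$, and the ``no flat faces'' step via a totally geodesic supporting hypersurface has no analogue. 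Second, the quadratic lower bound $f(\exp_p v)\ge t+\scalar{s}{v}+\tfrac{\kappa}{2}\abs{v}^2+o(\abs{v}^2)$ is derived by ``Taylor expanding'' and using a subgradient sum rule for a function $f$ that is only assumed continuous and convex on a manifold; the existence and the asserted characterization of $\partial f(p)$ (and the claim that the inner normal cone consists exactly of the nonnegative multiples of $-s$) are nontrivial and are not justified. Third, and most importantly, you yourself flag that the uniform positive lower bound on $\eps$ (hence on $\kappa$), which is indispensable to extract the single constant $\delta$ required by the definition of local strict convexity, is not established; the proposed contradiction argument (``$\eps\to 0$ yields a segment on which $f$ is affine'') does not obviously produce a nondegenerate segment, since the neighbourhoods on which $f-\eps\varphi$ is convex may shrink as $\eps\to 0$. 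Until that uniformity is proved, the argument does not yield local strict convexity in the sense of \autoref{defi: convex}. The honest fix is the one the paper takes: quote Bangert's Lemma 2.4 for \textit{(ii)}.
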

\begin{proof}
We only show \textit{(i)}, and we refer to \cite[Lemma 2.4]{B78} for the proof of \textit{(ii)} (note that the assumption on the weak convexity of $M$ ensures the connectedness of the sublevel set of $f$). Let $\gamma:[a,b]\to M$ be a geodesic, and assume that
\[
f(\gamma(a))<t \qquad \qquad f(\gamma(b))<t.
\]
Then, by the definition of convexity, for every $\alpha\in[0,1]$,
\[
f\bigl(\gamma\bigl(a+\alpha(b-a)\bigr)\bigr)\le(1-\alpha)f\bigl(\gamma(a)\bigr)+\alpha f\bigl(\gamma(b)\bigr)<t,
\]
and the assertion is proved.
\end{proof}
We aim to inspect the geometric properties of inner parallel and outer parallel of convex sets. Cheeger and Gromoll, in \cite[Theorem 1.10]{CG72} proved that, for a given convex set $C$ in a Riemannian manifold $M$ with positive sectional curvatures, the distance function
\[
\rho(x)= -d(x,\partial C)
\]
is convex in $\mathring{C}$. This implies that the inner parallel sets $C_t$ are totally convex in $\mathring{C}$. However, we will need some more refined results that can be found in \cite[Theorem 2.1, Theorem 2.3]{B78}, and we summarize in the following. Let $C\subset M$, and denote by $\rho=\rho_C$ the signed distance function 
\[
\rho(x)=\begin{cases}
-d(x,\partial C) &\text{if }x\in\mathring{C},\\
d(x,C) &\text{if }x\notin\mathring{C}.
\end{cases}
\]
Then we have 
\begin{teor}
\label{teor: convexdist}
    Let $M$ be a Riemannian manifold, and let $C$ be a connected, compact, locally convex set. Then the following hold:
    \begin{enumerate}[label=(\roman*)]
        \item if $C$ is locally strictly convex, then there exists $\delta>0$ such that the function
        \[
        \rho+\frac{1}{2}\rho^2
        \]
        is strictly convex on $\mathring{C}^\delta\setminus C$;
        \item if the sectional curvatures on $C$ are negative, then there exists $\delta>0$ such that the function
        \[
        \rho+\frac{1}{2}\rho^2
        \]
        is strictly convex on $\mathring{C}^\delta\setminus C$;
        \item if the sectional curvatures on $C$ are positive, then the function
        \[
            \rho-\log(-\rho)
        \]
        is strictly convex on $\mathring{C}$.
    \end{enumerate}
\end{teor}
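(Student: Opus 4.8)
The plan is to establish the asserted convexities by computing the Hessian of the composite functions along the normal geodesics of $\rho$ and then to absorb the singularities of $\rho$ through smooth barriers. Wherever $\rho$ is smooth one has $\abs{\nabla\rho}\equiv 1$, hence $\operatorname{Hess}\rho(\nabla\rho,X)=\tfrac12\,X(\abs{\nabla\rho}^2)=0$ for every $X$; thus $\nabla\rho$ lies in the kernel of $\operatorname{Hess}\rho$, and for any $C^2$ function $g$ the chain rule gives
\[
\operatorname{Hess}(g\circ\rho)=g''(\rho)\,d\rho\otimes d\rho+g'(\rho)\,\operatorname{Hess}\rho .
\]
This block-diagonalizes the Hessian into a radial part, equal to $g''(\rho)$ in the direction $\nabla\rho$, and a tangential part $g'(\rho)\,S(\rho)$ on the level set, where $S(\rho)$ denotes $\operatorname{Hess}\rho$ restricted to the level hypersurface, i.e.\ its shape operator with respect to $\nabla\rho$. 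Strict convexity in the sense of the paper's definition therefore reduces to the two pointwise inequalities $g''(\rho)>0$ and $g'(\rho)\,S(\rho)>0$. The first is precisely the role of the corrections: the bare $\rho$ has $g''\equiv0$ and fails to be strictly convex in the radial direction, whereas $+\tfrac12\rho^2$ yields $g''\equiv1$ and $-\log(-\rho)$ yields $g''=\rho^{-2}$, both strictly positive. Since $g'=1+\rho>0$ on a thin outer shell and $g'=1-\rho^{-1}>0$ on $\mathring C$, the whole problem collapses to proving positivity of the shape operator $S(\rho)$ of the level sets.

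For this I would use the matrix Riccati equation $S'+S^2+R_\gamma=0$ satisfied by $S$ along each unit-speed normal geodesic $\gamma$, where $R_\gamma=R(\cdot,\gamma')\gamma'$ is the Jacobi operator whose eigenvalues are the sectional curvatures of the planes containing $\gamma'$. Convexity of $C$ supplies the initial condition $S\ge 0$ at $\partial C$, read off from a locally supporting hypersurface (necessary because $\partial C$ is only a locally convex topological hypersurface). The sign of the curvature term then selects the three regimes. For \emph{(iii)} one flows \emph{inward}: reparametrizing by $\tau=-\rho$ reverses the equation to $dS/d\tau=S^2+R_\gamma$, so positive sectional curvature forces $S$ to increase away from its nonnegative boundary value and to remain strictly positive throughout $\mathring C$ (a blow-up to $+\infty$ at the focal/medial set only reinforces convexity). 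For \emph{(ii)} one flows \emph{outward}, where $dS/d\rho=-S^2-R_\gamma$ with $-R_\gamma>0$; comparison with the scalar equation $s'=-s^2+\kappa$, where $\kappa>0$ bounds $-R_\gamma$ from below on the compact set $C$, gives $S(\rho)\ge\sqrt\kappa\,\tanh(\sqrt\kappa\,\rho)>0$ for $\rho>0$. For \emph{(i)} no curvature sign is available, but local strict convexity furnishes $S\ge\delta_0>0$ at the boundary, and continuity of the Riccati flow keeps $S>0$ on a sufficiently thin shell $\{0<\rho<\delta\}$. In every case $S(\rho)>0$ on the prescribed domain, which settles the smooth points.

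It then remains to handle the points where $\rho$ is not smooth and to upgrade pointwise Hessian positivity to genuine strict convexity. For the outer cases \emph{(i)}–\emph{(ii)} this is immediate: for $\delta$ below the focal distance of $\partial C$ the outer distance function is smooth on the shell, so the computation applies verbatim. The delicate case is \emph{(iii)}, where the medial axis (the interior cut locus of $\partial C$) meets $\mathring C$. Here I would argue by barriers: at $p\in\mathring C$ pick a nearest boundary point $q\in\partial C$ and set $\rho_q:=-d(\cdot,q)$. Since $d(\cdot,\partial C)\le d(\cdot,q)$ with equality at $p$, one has $\rho_q\le\rho$ with equality at $p$, and, $g$ being increasing, $g\circ\rho_q$ is a smooth lower support for $g\circ\rho$ at $p$. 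As $\rho_q$ is the signed distance to a \emph{point}, the same Riccati computation fed with the curvature lower bound $\kappa>0$ shows that $\operatorname{Hess}(g\circ\rho_q)(p)$ is positive definite, bounded below by a fixed multiple of the metric on compact subsets of $\mathring C$. A continuous function admitting at every point such a strictly convex smooth lower support is strictly convex, which is the assertion.

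The main obstacle, accordingly, is the interface in case \emph{(iii)} between the Riccati/comparison machinery and the low regularity of both $\rho$ and $\partial C$: one must guarantee that $\partial C$ carries a supporting hypersurface at every point to initialize the comparison, that the nearest-point barriers $\rho_q$ are genuinely smooth where they are used (no conjugate point up to $p$), and that the singularities of $\rho$ on the medial axis are of the convex type, so that the sup-structure $\rho=\sup_{q\in\partial C}(-d(\cdot,q))$ reinforces rather than destroys convexity. Making these bounds uniform — so that the Hessian estimate survives subtraction of an arbitrary smooth convex $\varphi$, as the definition of strict convexity demands — is where the real work lies; this is exactly the content of \cite[Theorems 2.1, 2.3]{bangertappr}, which we invoke.
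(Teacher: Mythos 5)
The first thing to say is that the paper contains no proof of this statement to compare against: \autoref{teor: convexdist} is imported directly from Bangert \cite[Theorems 2.1 and 2.3]{bangertappr}, and the only original content supplied by the authors is the remark that part \textit{(ii)} follows from the proof of \textit{(i)} by replacing local strict convexity with a negative upper bound on the sectional curvatures. Since you also end by invoking Bangert for ``the real work,'' both your write-up and the paper's ultimately rest on the same citation. What can be assessed is whether your sketch of the mechanism is sound, and in the one genuinely delicate case it is not.

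The error is in your barrier for case \textit{(iii)}. At $p\in\mathring{C}$ with nearest boundary point $q$ you take $\rho_q=-d(\cdot,q)$ and claim that $\operatorname{Hess}(g\circ\rho_q)(p)$ is positive definite ``by the same Riccati computation fed with the curvature lower bound $\kappa>0$.'' It is not. In positive curvature the distance \emph{from a point} has $\operatorname{Hess}\, d(\cdot,q)=\cot_\kappa(d)\,(g-dd\otimes dd)\ge 0$ below the conjugate radius (exactly this in $\Sp^n$), so $\operatorname{Hess}\rho_q$ is \emph{negative} on the directions tangent to the geodesic sphere around $q$; since $d\rho_q$ annihilates those directions, $\operatorname{Hess}(g\circ\rho_q)=g''\,d\rho_q\otimes d\rho_q+g'\operatorname{Hess}\rho_q$ is strictly negative there because $g'>0$. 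Concretely, for $C$ a geodesic disc in $\Sp^2$ and $p$ an interior point, $g(-d(\cdot,q))$ is strictly concave at $p$ along the geodesic tangent to the circle $\{d(\cdot,q)=d(p,q)\}$. A lower support that is not convex proves nothing, and $\rho=\sup_{q\in\partial C}(-d(\cdot,q))$ is a supremum of \emph{concave} functions, so this sup-structure does not ``reinforce convexity.'' The barrier actually used by Cheeger--Gromoll and by Bangert is the negative distance to a supporting \emph{hypersurface} $H_q$ at $q$ (whose existence is \autoref{teor: supporting}): every geodesic from $x\in\mathring{C}$ to $H_q$ must cross $\partial C$, so $-d(\cdot,H_q)\le\rho$ with equality at $p$, and it is for the distance to a hypersurface, flowed toward $p$ with initial condition $S\ge 0$, that the Riccati equation with $R_\gamma>0$ yields a strictly positive shape operator (in $\Sp^n$, the level sets of the distance to a great hypersphere are the strictly convex parallel spheres). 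Your radial term $g''=\rho^{-2}$ and the check $g'>0$ are fine; the tangential positivity is precisely what your choice of barrier fails to deliver. A secondary gap: in cases \textit{(i)}--\textit{(ii)} you declare the outer shell ``immediate'' because the distance function is smooth there, but for a boundary that is only a locally convex topological hypersurface $d(\cdot,C)$ is merely $C^{1,1}$ on the shell (positive reach gives unique projections, not $C^2$ regularity), so the initial condition $S\ge\delta_0$ and the Riccati flow must again be run through supporting hypersurfaces; this, together with the uniformity required by the definition of strict convexity, is the content of Bangert's proof and is not recovered by your sketch.
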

\begin{oss}
Despite \cite[Theorem 2.1]{B78} only proves \textit{(i)}, result \textit{(ii)} directly follows from the same proof using a negative upper bound on the sectional curvatures to conclude (see also the proof of \cite[Corollary 2.6]{B78}).
\end{oss}
\begin{cor}
\label{cor: convexpar}
    Let $C\subset \Sp^n$ be a closed strongly convex set. Then:
    \begin{enumerate}[label=(\roman*)]
        \item if $C$ is strongly convex and locally strictly convex, then for small $\delta>0$ we have that the outer parallel sets $(C)^t$ are strongly convex and locally strictly convex for every $t<\delta$;
        \item the inner parallel sets $(C)_t$ are locally strictly convex and strongly convex for every $t>0$.
    \end{enumerate}
\end{cor}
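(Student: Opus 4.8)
The plan is to realise each parallel set, near its boundary, as a sublevel set of one of the auxiliary functions whose strict convexity is guaranteed by \autoref{teor: convexdist}, to pass from strict convexity of the function to local strict convexity of the sublevel set via \autoref{prop: sublevelconvex}\textit{(ii)}, and finally to upgrade local strict convexity to strong convexity through \autoref{prop: localtostrong}. Throughout I write $\rho=\rho_C$ for the signed distance function, so that $(C)^t=\{\rho\le t\}$ and $(C)_t=\{\rho\le -t\}$. The structural fact I would exploit is that $\Sp^n$ has constant sectional curvature $1>0$, so part \textit{(iii)} of \autoref{teor: convexdist} is available on \emph{all} of $\mathring C$, whereas for the outer sets I am forced to use part \textit{(i)}, which holds only on a thin shell and only under the local strict convexity assumption on $C$. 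This is precisely the source of the asymmetry between the two statements (all $t>0$ versus small $t$).

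For \textit{(ii)} I would argue as follows. Since $\rho<0$ on $\mathring C$, the function $h=\rho-\log(-\rho)$ is well defined there, and by \autoref{teor: convexdist}\textit{(iii)} it is strictly convex on $\mathring C$. A direct computation gives $h'(\rho)=1-1/\rho>0$ for $\rho<0$, so $h$ is a strictly increasing function of $\rho$; consequently, for every $t>0$ the inner parallel set $(C)_t=\{\rho\le -t\}$ agrees with the sublevel set $\{h\le -t-\log t\}$. As $C$ is compact and $h\to+\infty$ as $\rho\to 0^-$, these sublevel sets are compactly contained in $\mathring C$, and $\mathring C$ is weakly convex, being the interior of a strongly convex set. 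Hence \autoref{prop: sublevelconvex}\textit{(ii)} applies with $M=\mathring C$ and $f=h$, yielding that $(C)_t$ is locally strictly convex for every $t>0$. Local strict convexity implies local convexity, and $(C)_t$ is closed, connected (being a sublevel set of a convex function, hence totally convex by \autoref{prop: sublevelconvex}\textit{(i)}), and contained in the open hemisphere containing $C$ by \autoref{oss: convexinhemi}; \autoref{prop: localtostrong} then gives strong convexity.

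For \textit{(i)} the scheme is the same but localised. Since $C$ is locally strictly convex, \autoref{teor: convexdist}\textit{(i)} provides $\delta>0$ such that $g=\rho+\tfrac12\rho^2$ is strictly convex on the open shell $U=\mathring{(C)^\delta}\setminus C=\{0<\rho<\delta\}$. Here $g'(\rho)=1+\rho>0$ on the shell, so again $g$ is strictly increasing in $\rho$ and $\partial(C)^t=\{\rho=t\}=\{g=t+\tfrac12 t^2\}$ lies inside $U$ for every $0<t<\delta$. Shrinking $\delta$ if necessary so that $(C)^\delta$ still lies in an open hemisphere (possible by compactness of $C$), I would verify local strict convexity of $(C)^t$ pointwise on its boundary: around each $p\in\partial(C)^t$ choose a small geodesically convex ball $B\subset U$, on which $g$ is strictly convex and which is weakly convex, and apply the local content of \autoref{prop: sublevelconvex}\textit{(ii)} to the sublevel set $\{g<t+\tfrac12 t^2\}\cap B$ to produce the required supporting hypersurface at $p$. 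Once local strict convexity is established, strong convexity of $(C)^t$ follows exactly as in \textit{(ii)} from \autoref{prop: localtostrong}, using that $(C)^t$ is closed, connected, and contained in an open hemisphere.

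The main obstacle I anticipate is the localisation needed in \textit{(i)}: unlike in \textit{(ii)}, the auxiliary function $g$ is strictly convex only on the annular region $U$ and not on a weakly convex neighbourhood of the whole of $(C)^t$ (indeed $g$ need not even be convex deep inside $C$, where $\rho<-1$), so \autoref{prop: sublevelconvex}\textit{(ii)} cannot be invoked globally. The legitimate way around this is to use that local strict convexity is a property of the boundary alone, which permits restricting to convex balls inside $U$; care is then required to ensure that the supporting hypersurfaces furnished on these balls genuinely meet $(C)^t$ only at the contact point, which is where the strong convexity already obtained via \autoref{prop: localtostrong} must be fed back in. Secondary technical points are the weak convexity of $\mathring C$ and the precise compactness of the sublevel sets in \textit{(ii)}, together with the choice of $\delta$ keeping everything inside a hemisphere in \textit{(i)}.
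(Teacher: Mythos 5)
Your proposal is correct and follows essentially the same route as the paper: \autoref{teor: convexdist} to produce a strictly convex auxiliary function of the signed distance, \autoref{prop: sublevelconvex} to pass to the sublevel sets, and \autoref{prop: localtostrong} to upgrade to strong convexity. Two minor deviations are worth recording. First, for the inner parallels the paper does not invoke \autoref{prop: localtostrong} at all: it deduces strong convexity of $(C)_t$ from its total convexity in $\mathring{C}$ (via \autoref{prop: sublevelconvex}\textit{(i)} applied to $\rho-\log(-\rho)$) combined with the strong convexity of $C$ itself, which sidesteps your appeal to the weak convexity of $\mathring{C}$. Second, for the outer parallels the circularity you flag in your closing paragraph is real --- the definition of local strict convexity requires the supporting hypersurface to meet the \emph{whole} set only at the contact point, so a purely local argument on a ball $B\subset(\mathring{C})^\delta\setminus C$ cannot deliver it before some global convexity is known --- and the paper resolves it by reordering exactly as you suggest: it first obtains plain local convexity of $(C)^t$ (for boundary points, a small strongly convex ball $B$ in the shell together with the convexity of $\rho+\tfrac12\rho^2$ gives that $B\cap(C)^t$ is strongly convex; interior points are trivial), then strong convexity via \autoref{prop: localtostrong}, and only afterwards local strict convexity via \autoref{prop: sublevelconvex}. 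You should adopt that order in part \textit{(i)} rather than claiming local strict convexity first; with that adjustment your argument matches the paper's.
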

\begin{proof}
    By the condition \textit{(i)} in \autoref{teor: convexdist} we get that $(C)^t$, for small values of $t$ is locally convex. Indeed, for every interior point $p$ of $(C)^t$ it is sufficient to observe that a small strongly convex ball contained in $(C)^t$ always exists. If $p\in\partial(C)^t$, since we can find a small strongly convex ball $B$ contained in $(\mathring{C})^\delta\setminus C$, then the convexity of the function $\rho+\rho^2/2$ ensures that $B\cap (C)^t$ is strongly convex. 

    Moreover, $C$ is connected and contained in a hemisphere, as already seen in \autoref{oss: convexinhemi}. Therefore, $(C)^t$, for small $t$, is connected and contained in the same hemisphere, which implies, by \autoref{prop: localtostrong}, that $(C)^t$ is strongly convex. Finally, by \autoref{prop: sublevelconvex}, we also get that for small $t$ the set $(C)^t$ is locally strictly convex.

    Let us now study the inner parallels $(C)_t$. Analogously to the case of the outer parallels, condition \textit{(iii)} in \autoref{teor: convexdist} yields that the inner parallel sets $(C)_t$ are locally strictly convex. Moreover, the convexity of the function $\rho-\log(-\rho)$ ensures that the sets $(C)_t$ are totally convex in $\mathring{C}$ (see \textit{(i)} in \autoref{prop: sublevelconvex}). Since $C$ is strongly convex, then the total convexity of $(C)_t$ in $\mathring{C}$ gives that the inner parallel sets $(C)_t$ are strongly convex.
\end{proof}
Now we state an approximation theorem proved by Bangert in \cite[Theorem 2.2, Corollary 2.5, Corollary 2.6]{B78}.
\begin{teor}
\label{teor: approx}
    Let $M$ be a Riemannian manifold, and let $C\subset M$ be a connected, compact, locally convex set such that $\mathring{C}\ne \emptyset$. Moreover, assume that either:
    \begin{enumerate}[label=(\alph*)]
        \item $C$ is locally strictly convex;
        \item the sectional curvatures are positive on $C$;
        \item the sectional curvatures are negative on $C$;
    \end{enumerate}
    then there exists a sequence of connected, compact, locally convex sets $C_k$ with $C^\infty$ boundaries such that
    \[
        \lim_{k\to+\infty} d^H(C_k,C)+d^H(\partial C_k,\partial C)=0.
    \]
\end{teor}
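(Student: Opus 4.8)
The goal is to approximate a general convex set $C$ (satisfying one of the three hypotheses) by smooth convex sets in the Hausdorff metric on both the sets and their boundaries. The natural strategy is to realize $C$ as a sublevel set of a convex function that can be regularized, and then to exploit the convexity-of-distance results in \autoref{teor: convexdist} to control the geometry. Concretely, I would consider the signed distance function $\rho=\rho_C$, together with the auxiliary functions appearing in \autoref{teor: convexdist}: namely $\rho+\tfrac{1}{2}\rho^2$ in cases \textit{(a)} and \textit{(c)}, and $\rho-\log(-\rho)$ in case \textit{(b)}. These auxiliary functions are \emph{strictly} convex (in the sense defined above) on a one-sided neighbourhood of $\partial C$, and $C$ is, up to the monotone reparametrization induced by these functions, a sublevel set of each of them.

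**Key steps.**
First, I would fix a strictly convex function $\Phi$ whose sublevel set $\{\Phi<c\}$ coincides with $C$ for an appropriate value $c$, using the appropriate case of \autoref{teor: convexdist}; by \autoref{prop: sublevelconvex}\textit{(ii)} the relevant sublevel sets are locally strictly convex and compact. Second, I would mollify $\Phi$ against a smooth kernel to obtain a sequence $\Phi_k\to\Phi$ in $C^0_{\loc}$ with $\Phi_k\in C^\infty$; the crucial point is that strict convexity survives mollification on compact sets (a convex-plus-definite-Hessian estimate passes to the smoothing), so each $\Phi_k$ remains strictly convex near the relevant level. Third, by Sard's theorem I would pick regular values $c_k\to c$ so that $C_k=\{\Phi_k\le c_k\}$ has smooth boundary $\partial C_k=\{\Phi_k=c_k\}$, is compact, connected, and locally (strictly) convex by \autoref{prop: sublevelconvex}. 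Fourth, I would verify the double Hausdorff convergence: the uniform convergence $\Phi_k\to\Phi$ together with the strict convexity (which forbids the level sets from being ``flat'' and hence gives a uniform lower bound on $|\nabla\Phi|$ near the level $c$) forces $C_k\to C$ and $\partial C_k\to\partial C$ in Hausdorff distance, since uniformly transversal level sets of uniformly converging functions converge as sets.

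**The main obstacle.**
The technical heart is the mollification on a Riemannian manifold: one cannot simply convolve, so I would work in a finite cover by normal-coordinate charts, mollify in coordinates, and glue via a partition of unity, checking that (i) the glued function inherits strict convexity from the local strict convexity of the pieces and (ii) no spurious non-convex behaviour is introduced at the overlaps. Preserving strict convexity under this manifold mollification—quantitatively, keeping a uniform positive lower bound on the Hessian along geodesics near $\partial C$—is precisely where the strict convexity hypothesis (as opposed to mere convexity) is indispensable, and controlling the curvature contributions to the Hessian of $\Phi_k\circ\gamma$ in each of the three curvature regimes is the delicate part. This is exactly the content Bangert establishes in \cite[Theorem 2.2, Corollary 2.5, Corollary 2.6]{bangertappr}, so in the paper I would simply invoke that reference rather than reproduce the regularization in detail.
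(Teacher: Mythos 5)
Your proposal follows essentially the same route as the paper: the paper gives no independent proof of this statement but attributes it to Bangert, exactly as you do, and its accompanying remark reduces cases \textit{(b)} and \textit{(c)} to case \textit{(a)} via \autoref{teor: convexdist} and \autoref{prop: sublevelconvex}, by approximating the inner (resp.\ outer) parallel sets --- the same mechanism you describe. One small imprecision: in case \textit{(b)} the set $C$ itself is not a sublevel set of $\rho-\log(-\rho)$ (that function blows up as $\rho\to 0^-$); rather its sublevel sets are the inner parallel sets $(C)_t$, which are compact and locally strictly convex, so one applies case \textit{(a)} to them and then lets $t\to 0$, i.e.\ the smoothing is performed on the parallel sets rather than on $C$ directly. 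With that adjustment your outline matches the paper's (and Bangert's) argument.
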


\begin{oss}
    Results \textit{(b)} and \textit{(c)} are a direct consequence of \textit{(a)} and \autoref{teor: convexdist} joint with \autoref{prop: sublevelconvex}: in the case \textit{(b)} one approximates the inner parallel sets, while in the case \textit{(c)} one approximates the outer parallel sets.
    \end{oss}

\begin{cor}
\label{cor: approx}
    Let $C\subset\Sp^n$ be a closed strongly convex set such that $\mathring{C}\ne \emptyset$. Then there exists a sequence of closed strongly convex sets $C_k$ with $C^{\infty}$ boundaries such that
    \[
        \lim_{k\to+\infty} d^H(C_k,C)+d^H(\partial C_k,\partial C)=0.
    \]
\end{cor}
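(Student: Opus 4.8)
The plan is to derive this corollary directly from Bangert's approximation theorem (\autoref{teor: approx}), producing \emph{locally} convex smooth approximants first, and then upgrading them to \emph{strongly} convex sets by exploiting that everything happens inside an open hemisphere. First I would check that $C$ satisfies the hypotheses of \autoref{teor: approx}: being a closed subset of the compact manifold $\Sp^n$, the set $C$ is compact; being strongly convex, it is connected and locally convex (as remarked immediately after the totally-convex proposition); and by assumption $\mathring{C}\neq\emptyset$. Since the sphere $\Sp^n$ has constant sectional curvature $1>0$, hypothesis \textit{(b)} of \autoref{teor: approx} holds on $C$. Hence there is a sequence of connected, compact, locally convex sets $C_k$ with $C^\infty$ boundaries such that $d^H(C_k,C)+d^H(\partial C_k,\partial C)\to 0$.

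The remaining point is that \autoref{teor: approx} only yields local convexity, whereas the corollary asks for strong convexity, and I would recover this via \autoref{prop: localtostrong}, whose single additional requirement is containment in an open hemisphere. By \autoref{oss: convexinhemi}, the closed strongly convex set $C$ is contained in some open hemisphere $H$. Since $C$ is compact and $\Sp^n\setminus H$ is closed and disjoint from $C$, there is $\eta>0$ with $(C)^\eta\subset H$. The Hausdorff convergence $d^H(C_k,C)\to 0$ then gives $C_k\subset (C)^\eta\subset H$ for all $k$ large enough, say $k\ge k_0$. For such $k$, the set $C_k$ is closed (being compact), connected, locally convex, and contained in an open hemisphere, so \autoref{prop: localtostrong} guarantees that $C_k$ is strongly convex. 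Discarding the finitely many terms with $k<k_0$ (and reindexing if desired) yields the required sequence of closed strongly convex sets with $C^\infty$ boundaries converging to $C$ in the stated sense.

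I do not expect a serious obstacle here: the entire argument is an assembly of the machinery already prepared in this section, and the only genuine step is the passage from local to strong convexity. That step hinges precisely on the open-hemisphere containment surviving under Hausdorff approximation, which is where the compactness of $C$ and the strictness of the inclusion $C\subset H$ are used. The one subtlety to handle carefully is to record that this containment, and hence the applicability of \autoref{prop: localtostrong}, holds only for $k$ sufficiently large, which is harmless since we are free to pass to a tail of the sequence.
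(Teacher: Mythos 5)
Your proposal is correct and follows essentially the same route as the paper: apply \autoref{teor: approx} in the positive-curvature case to get smooth, connected, compact, locally convex approximants, then use the Hausdorff convergence to place them in the open hemisphere containing $C$ and invoke \autoref{prop: localtostrong} to upgrade to strong convexity. Your explicit $\eta$-neighbourhood argument for the hemisphere containment is just a more detailed version of the paper's remark that ``the Hausdorff convergence allows us to assume'' this containment.
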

\begin{proof}
    Since the sectional curvatures in $\Sp^n$ are positive, \autoref{teor: approx} applies and we find an approximating sequence of connected, compact, locally convex sets $C_k$ with $C^\infty$ boundaries and such that $\mathring{C}_k\ne \emptyset$. Therefore, the Hausdorff convergence also allows us to assume that $C_k$ are contained in the same hemisphere in which $C$ is contained. By \autoref{prop: localtostrong}, we get that $C_k$ are strongly convex.
\end{proof}
\subsection{Curvature measures}
\label{curvatures}
In this section we define curvature measures introduced in $\R^n$ by Federer in \cite{F59} and explicitly computed by Zähle in \cite{Z86}, while successively extended to simply connected space forms by Kohlmann in \cite{K91}.

\medskip

As a first step, we define sets of positive reach. Given a Riemannian manifold $M$, for every $p\in M$ and for every $r>0$ we denote by $B_r(p)$ the metric ball centered in $p$ of radius $r>0$. For small enough $r>0$ we have that $B_r(p)$ coincides with the geodesic ball $\exp_p(B_r(0))$. 
\begin{defi}
    Let $M$ be a Riemannian manifold, and let $\Omega\subset M$ be a non-empty set. For every $p\in M$ we call \emph{metric projection} of $p$ onto $\Omega$ any $q\in \bar\Omega$ such that
    \[
        d(p,q)=d(p,\Omega).
    \]
    When it is unique we write $q=\sigma_\Omega(p)$.
\end{defi}
\begin{defi}
    Let $M$ be a Riemannian manifold, and let $\Omega\subset M$ be a non-empty set. For every $q\in \Omega$ we define the \emph{reach} of $q$ with respect to $\Omega$ as
    \[
        \mathcal{R}(q)=\sup\Set{r>0 | \forall p\in B_r(q) \text{ there exists a unique metric projection of $p$ onto $\Omega$}};
    \]
    we define the \emph{reach} of $\Omega$ as 
    \[
        \mathcal{R}(\Omega)=\inf_{q\in\Omega}\mathcal{R}(q);
    \]
    we say that $\Omega$ is \emph{of positive reach} if $\mathcal{R}(\Omega)>0$.
\end{defi}
In $\R^n$ we have that every convex set $C$ is a set of positive reach with
\[
\mathcal{R}(C)=+\infty.
\]
 On simply connected space forms similar but slightly different results hold for connected, compact, locally convex sets. First, we state a result due to Walter in \cite[Theorem 1]{W74}.
\begin{prop}
    Let $M$ be a Riemannian manifold, and let $C\subseteq M$ be a closed locally convex set. Then $C$ is of positive reach.
\end{prop}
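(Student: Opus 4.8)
The plan is to argue by contradiction, reducing the global statement to a local uniqueness problem near a single boundary point, where the defining local convexity can be combined with comparison geometry. First I would observe that if $\mathcal{R}(C)=0$, then there exist a sequence of points $q_k\in C$ and a sequence $p_k\in M$ with $d(p_k,q_k)\to 0$ such that each $p_k$ admits two distinct metric projections $a_k\ne b_k\in C$. Since a point of $C$ is its own unique nearest point, each $p_k$ lies outside $C$, and $d(p_k,a_k)=d(p_k,b_k)=d(p_k,C)\le d(p_k,q_k)\to 0$ forces all of $q_k,p_k,a_k,b_k$ to collapse onto a common limit point $q_\infty\in\partial C$; thus the failure of uniqueness localizes at the boundary. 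It therefore suffices to exhibit, for each $p\in\partial C$, a radius $r>0$, bounded below in terms of the local geometry only, such that every point of $B_r(p)$ has a unique metric projection onto $C$.

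Next I would fix the boundary point $q_\infty$ and a metric ball $B_\eps(q_\infty)$ on which $C\cap B_\eps(q_\infty)$ is strongly convex, which exists by part (c) of \autoref{defi: convex}. Shrinking $\eps$, I may also arrange that $B_\eps(q_\infty)$ is a totally normal ball on which the sectional curvatures are bounded above by some $\ka>0$, and that $\eps<\pi/(2\sqrt{\ka})$, so that $B_\eps(q_\infty)$ behaves like a $\mathrm{CAT}(\ka)$ domain: squared distance functions are uniformly strictly convex along geodesics of length $<\eps$. For $p$ sufficiently close to $q_\infty$ every nearest point $a$ of $C$ satisfies $d(q_\infty,a)\le d(q_\infty,p)+d(p,a)\le 2\,d(p,q_\infty)$, so all nearest points lie in $B_\eps(q_\infty)$ and the projection problem onto $C$ coincides with the projection problem onto the strongly convex set $C\cap B_\eps(q_\infty)$.

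The heart of the argument is then a midpoint estimate. Suppose $p\in B_r(q_\infty)$ had two distinct nearest points $a\ne b$ in $C$, with common distance $\delta=d(p,C)$. By strong convexity of $C\cap B_\eps(q_\infty)$ the minimal geodesic $\overline{ab}$ lies in $C$, and I would consider its midpoint $m\in C$. The comparison inequality for the squared distance to $p$ along $\overline{ab}$, a consequence of second-variation/Toponogov comparison in the ball, gives
\[
d(p,m)^2\le \tfrac12 d(p,a)^2+\tfrac12 d(p,b)^2-c\,d(a,b)^2=\delta^2-c\,d(a,b)^2
\]
for a constant $c=c(\ka,\eps)>0$, whence $d(p,m)<\delta$ because $a\ne b$. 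This contradicts $\delta=d(p,C)$ since $m\in C$, proving uniqueness of the projection on a neighbourhood of $q_\infty$ whose radius depends only on $\eps$ and $\ka$.

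Finally, I would upgrade pointwise uniqueness to $\mathcal{R}(C)>0$. The radius produced above is controlled by the local geometry ($\eps$ and a curvature bound), which varies continuously; on the compact sets relevant here (strongly convex $C\subset\Sp^n$, or any compact $C$) a covering argument yields a uniform lower bound, so $\inf_{q\in C}\mathcal{R}(q)>0$. The main obstacle I anticipate is making the comparison estimate rigorous at the level of the merely Lipschitz boundary of $C$: one must guarantee that $\overline{ab}$ and its midpoint genuinely lie in $C$ and stay inside $B_\eps(q_\infty)$, which is exactly where the strong convexity of $C\cap B_\eps(q_\infty)$ and the embeddedness of $\partial C$ furnished by \autoref{teor: supporting} enter, together with the bound $d(a,b)\le d(p,a)+d(p,b)=2\delta$ ensuring $\overline{ab}\subset B_\eps(q_\infty)$ so that the convexity applies.
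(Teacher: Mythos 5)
The paper offers no proof of this proposition at all: it is quoted from Walter \cite{W74}, so your argument is a self-contained substitute rather than a variant of anything in the text. As such it is essentially sound in the setting in which the paper actually uses the result, namely compact (in particular, strongly convex) sets. The mechanism --- localize at a boundary point $q_\infty$ using part (c) of \autoref{defi: convex}, note that all nearest points of a nearby $p$ fall into the ball where $C\cap B_\eps(q_\infty)$ is strongly convex, and then beat $d(p,C)$ at the midpoint of $\overline{ab}$ via the uniform convexity of $q\mapsto d(p,q)^2$ on balls below the convexity radius (Hessian comparison, or the local $\mathrm{CAT}(\kappa)$ property) --- is correct and standard. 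One comment: your anticipated obstacle about the merely Lipschitz boundary is not a real one. The midpoint inequality involves only the ambient geodesic $\overline{ab}$ and the fact that it lies in $C$, which is precisely what strong convexity of $C\cap B_\eps(q_\infty)$ provides; no regularity of $\partial C$ and no appeal to \autoref{teor: supporting} are needed.

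The one genuine gap is the compactness you smuggle into the localization step. From $\mathcal{R}(C)=0$ you obtain points $q_k\in C$ with $\mathcal{R}(q_k)\to 0$, and to make $q_k,p_k,a_k,b_k$ ``collapse onto a common limit point $q_\infty$'' you must extract a convergent subsequence of $(q_k)$; this requires $C$ to be compact (you also need $M$ complete so that nearest points exist at all, since a failure of \emph{unique} projection could a priori be a failure of existence). The proposition as stated admits an arbitrary closed locally convex set in an arbitrary Riemannian manifold, and there your argument only delivers the local statement $\mathcal{R}(q)>0$ for each fixed $q$; whether $\inf_q\mathcal{R}(q)>0$ is a genuinely global question, tied to uniform bounds on the ambient geometry along $C$ (compare \autoref{prop: lowboundreach}, where the uniform bound $\pi/2$ on the sphere reflects exactly such global control). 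You should either restrict the statement to compact $C$ --- which is all the paper ever uses --- or add the argument handling the noncompact case. A smaller remark: the closing ``covering argument'' is redundant, since your contradiction scheme already yields the uniform bound once the subsequence converges.
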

In the specific case of simply connected space forms we have the following result that can be found in \cite[Lemma 2.2]{K94}.
\begin{prop}
\label{prop: lowboundreach}
    Let $M$ be a simply connected space form of curvature $\ka$, and let $C\subseteq M$ be a connected, compact, locally convex set. Then:
    \begin{enumerate}[label=(\roman*)]
        \item if $\ka<0$, then 
        \[
        \mathcal{R}(C)=+\infty;
        \]
        \item if $\ka>0$, then 
        \[
        \mathcal{R}(C)\ge \frac{\pi}{2}\frac{1}{\sqrt{\ka}}.
        \]
    \end{enumerate}
    \end{prop}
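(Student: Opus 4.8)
The plan is to read the reach off the uniqueness of the nearest‑point projection and to split according to the sign of $\ka$; in the curved cases I would first rescale, since the homothety identifying a space form of curvature $\ka$ with the unit model multiplies all distances, hence the reach, by $\sqrt\ka$, turning the asserted bound $\frac{\pi}{2\sqrt\ka}$ into $\frac\pi2$ on $\Sp^n$. In both cases the statement reduces to showing that, for a point $p$ whose distance $t$ to $C$ is below the stated threshold, the set of points of $C$ realizing $t$ is a singleton.

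For $\ka<0$ the manifold is Hadamard: a compact connected locally convex set is globally convex, minimal geodesics are unique, and by \autoref{teor: convexdist}(ii) together with the standard convexity of the distance to a convex set in nonpositive curvature, $x\mapsto d(x,C)$ is convex on $M\setminus C$. If a point $p$ had two nearest points $q_1\neq q_2$, the unique geodesic $\overline{q_1q_2}$ would lie in $C$ and its midpoint, by strict convexity, would be strictly closer to $p$, a contradiction. Thus the projection is everywhere unique and $\mathcal{R}(C)=+\infty$.

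For $\ka>0$, normalized to $\ka=1$, I would exploit that on $\Sp^n\subset\R^{n+1}$ the function $f_p(x)=\scalar{p}{x}=\cos d(p,x)$ is the restriction of a linear form, so that $f_p(\gamma(s))=A\cos(s-s_0)$ along every unit‑speed geodesic $\gamma$; minimizing $d(p,\cdot)$ over $C$ is the same as maximizing $f_p$ over $C$. Fix $q\in C$ and $p\in B_{\pi/2}(q)$, so that $t:=d(p,C)<\pi/2$ and $\cos t>0$, and let $q_1,q_2\in C$ be two nearest points. Then $d(q_1,q_2)\le 2t<\pi$, so there is a unique minimal geodesic joining them; granting that it lies in $C$, the equal positive endpoint values $\cos t$ force the cosine to peak at the midpoint $m$, whence $f_p(m)=\cos t/\cos\!\big(d(q_1,q_2)/2\big)>\cos t$ and $d(p,m)<t$ with $m\in C$. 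This contradicts $t=d(p,C)$, so the projection of $p$ is unique, $\mathcal{R}(q)\ge\pi/2$, and taking the infimum over $q\in C$ gives $\mathcal{R}(C)\ge\pi/2$ (sharpness being visible on a closed hemisphere).

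The crux, and the step I expect to be the genuine obstacle, is justifying that $\overline{q_1q_2}\subseteq C$. This is immediate when $C$ is strongly convex, and local convexity alone does not suffice, since the two nearest points may be almost antipodal (distance arbitrarily close to $\pi$) while local convexity only joins nearby points. The right fix is to upgrade local to weak convexity: a minimal geodesic between $q_1$ and $q_2$ lying in $C$, being the unique one as $d(q_1,q_2)<\pi$, already makes the midpoint argument go through. For $C$ contained in an open hemisphere this follows from \autoref{prop: localtostrong} (which in fact yields strong convexity), and in general I would establish weak convexity through the supporting‑element characterization \autoref{prop: weakchar} combined with \autoref{teor: supporting}; the delicate borderline is precisely the proper sets not contained in any open hemisphere, such as closed hemispheres, for which I would either verify weak convexity directly or exhaust $C$ from within by strongly convex sets via \autoref{cor: approx} and pass the uniform bound $\mathcal{R}\ge\pi/2$ to the limit.
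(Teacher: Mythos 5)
The paper does not actually prove this proposition: it is quoted verbatim from Kohlmann \cite[Lemma 2.2]{K94}, so there is no internal argument to compare against. Your proposal supplies a self-contained proof, and its core is sound. The negative-curvature case is the standard Hadamard argument (local convexity upgrades to global convexity because $\cut(p)=\emptyset$, so \autoref{prop: weakchar} and \autoref{teor: supporting} apply; then strict convexity of $s\mapsto d(p,\gamma(s))$ along $\overline{q_1q_2}$ rules out two nearest points). The positive-curvature case via the support function $f_p(x)=\scalar{p}{x}=\cos d(p,x)$, which is $A\cos(s-s_0)$ along unit-speed geodesics, is a clean quantitative replacement for convexity: with $t<\pi/2$ one gets $d(q_1,q_2)\le 2t<\pi$, the midpoint value $\cos t/\cos\bigl(d(q_1,q_2)/2\bigr)>\cos t$, and the desired contradiction. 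You correctly isolate the only real issue, namely $\overline{q_1q_2}\subseteq C$, and your proposed resolution does work for $n\ge 2$ and $\mathring{C}\ne\emptyset$: \autoref{teor: supporting} gives supporting elements, and $\mathring{C}\setminus\cut(p)=\mathring{C}\setminus\{-p\}$ stays connected, so \autoref{prop: weakchar} yields weak convexity even for sets (like closed hemispheres) not contained in any open hemisphere; uniqueness of the minimal geodesic for $d(q_1,q_2)<\pi$ then finishes it. This covers everything the paper actually uses the proposition for (convex bodies and their parallel sets).

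Two loose ends are worth cleaning up. First, your fallback of ``exhausting $C$ by strongly convex sets via \autoref{cor: approx}'' is not available in the problematic cases: \autoref{cor: approx} is stated only for strongly convex $C$, so invoking it there is circular, and passing a lower reach bound through Hausdorff limits would itself require an argument (the class $\{\mathcal{R}\ge r\}$ is closed under Hausdorff convergence, but that is a Federer-type fact not recorded in the paper). Fortunately you do not need this fallback, since the \autoref{prop: weakchar} route already closes the case $\mathring{C}\ne\emptyset$. Second, the statement allows $\mathring{C}=\emptyset$ (points, geodesic arcs, great subspheres --- the equator in $\Sp^2$ shows the bound $\pi/2$ is attained); your argument as written does not treat these, and \autoref{prop: weakchar}/\autoref{teor: supporting} are formulated for sets with interior. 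This degenerate case is irrelevant for the paper's application but would need a separate (easy, lower-dimensional) argument for the proposition as stated.
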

\begin{defi}
    Let $(M,g)$ be a Riemannian manifold, let $\Omega\subseteq M$ be a set of positive reach, and let $U$ be an open neighborhood of $\Omega$ such that the metric projection $\sigma_\Omega$ is well defined for every $p\in U$. We denote by 
    \[
        \nu_\Omega(p)=\frac{\exp^{-1}_{\sigma_\Omega(p)}(p)}{\norma{\exp^{-1}_{\sigma_\Omega(p)}(p)}_g} \in TM,
    \]
    and we define the \emph{unit normal bundle}
    \[
        \mathcal{N}(\Omega)=\nu_\Omega(U\setminus\Omega).
    \]
\end{defi}
Since we are going to integrate over $\mathcal{N}(\Omega)$, we need some regularity property of the normal bundle, that is proved in \cite[Theorem 4.3]{W76}. In the following, we are equipping $TM$ with the canonical Sasaki metric.
\begin{teor}
    Let $M$ be a Riemannian $n$-manifold, and let $\Omega\subseteq M$ be a set of positive reach. Then $\mathcal{N}(\Omega)$ is a strongly Lipschitz $(n-1)$-submanifold of $TM$. Moreover, if $\Omega$ is compact, then $\mathcal{N}(\Omega)$ is compact and there exists $\eta=\eta(\Omega)>0$ such that for every $0<r<\eta$, we have that
    \[
    \restr{\nu_\Omega}{\partial(\Omega)^r}: \partial(\Omega)^r \to \mathcal{N}(\Omega)
    \]
    is a locally bi-Lipschitz homeomorphism.
    \end{teor}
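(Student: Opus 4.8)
The plan is to reduce everything to the regularity of the metric projection $\sigma_\Omega$ on a one-sided tubular neighbourhood of $\Omega$, and then to read off the structure of $\mathcal N(\Omega)$ from a product decomposition of that tube. Set $R=\mathcal R(\Omega)>0$ and $U=\Set{p\in M | d(p,\Omega)<R}$. The first and main step is to show that $\sigma_\Omega$, and hence
\[
\nu_\Omega(p)=\frac{\exp^{-1}_{\sigma_\Omega(p)}(p)}{\norma{\exp^{-1}_{\sigma_\Omega(p)}(p)}_g},
\]
is locally Lipschitz on the punctured tube $U\setminus\Omega$. Since positive reach is a local condition and every point of $M$ has a neighbourhood that is bi-Lipschitz, through a chart in normal coordinates, to a subset of $\R^n$ carrying a metric with controlled distortion, one transplants Federer's Euclidean argument: the unique foot point minimizes a strictly convex-like function along the approximating geodesics, and a quantitative comparison using two-sided curvature bounds on a fixed compact piece of the tube yields $d(\sigma_\Omega(p_1),\sigma_\Omega(p_2))\le C\,d(p_1,p_2)$ for $p_1,p_2$ in a compact subset of $U\setminus\Omega$. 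As the normalization and $\exp^{-1}$ are smooth where $\sigma_\Omega(p)\ne p$, the local Lipschitz continuity of $\nu_\Omega$ follows. I expect this estimate to be the principal obstacle, precisely because the straight-line monotonicity that drives the Euclidean proof must be replaced by geodesic comparison; this is also where the lower bound on the reach (see \autoref{prop: lowboundreach}) is used to keep the constants uniform.

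With this in hand, I would fix $0<\eta<R$ (and, when $\Omega$ is compact, also below a uniform injectivity-radius bound on the compact tube $\Set{p | d(p,\Omega)\le\eta}$) and consider the map
\[
\Phi(p)=\bigl(\nu_\Omega(p),\,d(p,\Omega)\bigr),\qquad 0<d(p,\Omega)<\eta.
\]
Using the definition of reach, each normal geodesic $r\mapsto\exp_q(r\nu)$ with $(q,\nu)\in\mathcal N(\Omega)$ realizes the distance to $\Omega$ for $r<\eta$, so $\sigma_\Omega(\exp_q(r\nu))=q$ and $d(\exp_q(r\nu),\Omega)=r$; this shows that $\Phi$ is a bijection onto $\mathcal N(\Omega)\times(0,\eta)$ with inverse $((q,\nu),r)\mapsto\exp_q(r\nu)$. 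The inverse is smooth, being a composition of $\exp$ with projections, while $\Phi$ is locally Lipschitz by the first step, so $\Phi$ is a locally bi-Lipschitz homeomorphism. The punctured tube is an open subset of $M$, hence a Lipschitz $n$-manifold, and the bi-Lipschitz product splitting realized by $\Phi$ endows $\mathcal N(\Omega)$ with the structure of a strongly Lipschitz $(n-1)$-submanifold of $TM$ (with its Sasaki metric): combining the footpoint coordinate with the vertical coordinate recording the normal direction produces, near each $(q_0,\nu_0)\in\mathcal N(\Omega)$, a chart of $TM$ in which $\mathcal N(\Omega)$ is represented as a Lipschitz graph, the dimension count $n-1$ being forced by the product decomposition.

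Finally I would extract the two remaining assertions. If $\Omega$ is compact, then $\mathcal N(\Omega)$ consists of unit vectors based at points of the compact set $\partial\Omega$, hence is bounded in $TM$; it is also closed, being the image of the compact slice $\partial(\Omega)^{\eta/2}$ under the continuous map $\nu_\Omega$, so $\mathcal N(\Omega)$ is compact. For the homeomorphism claim, I would fix $0<r<\eta$ and restrict $\Phi$ to the slice $\Set{p | d(p,\Omega)=r}=\partial(\Omega)^r$: there the second coordinate of $\Phi$ is constant, so $\restr{\nu_\Omega}{\partial(\Omega)^r}$ coincides with the first coordinate of $\Phi$ and is therefore a locally bi-Lipschitz homeomorphism onto $\mathcal N(\Omega)$, with explicit inverse $(q,\nu)\mapsto\exp_q(r\nu)$. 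This closes the argument, the only nonroutine ingredient being the Lipschitz bound for $\sigma_\Omega$ discussed above.
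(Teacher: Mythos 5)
First, note that the paper does not prove this statement at all: it is imported verbatim from \cite[Theorem 4.3]{walterregularity}, so there is no internal proof to compare against. Judged on its own terms, your proposal follows the expected outline (Lipschitz foot-point projection, product splitting of the tube, restriction to a slice), but it has a genuine gap at the step where the submanifold structure of $\mathcal{N}(\Omega)$ is extracted. You argue that the bi-Lipschitz splitting $\Phi$ of the punctured tube onto $\mathcal{N}(\Omega)\times(0,\eta)$ ``endows $\mathcal{N}(\Omega)$ with the structure of a strongly Lipschitz $(n-1)$-submanifold of $TM$, the dimension count being forced by the product decomposition.'' This is not a valid deduction: a space whose product with an interval is a (Lipschitz) $n$-manifold need not itself be a manifold (Bing's dogbone space crossed with $\R$ is $\R^4$), so the product decomposition alone forces nothing. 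More to the point, the paper's definition of \emph{strongly Lipschitz submanifold} demands an explicit $C^1$ chart of the ambient space --- here $TM$, of dimension $2n$ --- in which $\mathcal{N}(\Omega)$ is the graph of a Lipschitz map from an open subset of $\R^{n-1}$ into $\R^{n+1}$. Your splitting lives entirely in $M$ and never produces such a chart. The standard route (Federer in $\R^n$, Walter in the Riemannian case) is to first show \emph{directly} that the level set $\partial(\Omega)^r$ is locally a Lipschitz graph in $M$, via a uniform two-sided cone condition on $d(\cdot,\Omega)$ coming from the tangent/exterior ball property of positive reach, and then to transport and supplement this to obtain the graph representation of $\mathcal{N}(\Omega)$ inside $TM$; that construction is the content of the theorem and is absent from your argument.

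Two further steps are asserted rather than proved. The claim that ``using the definition of reach, each normal geodesic $r\mapsto\exp_q(r\nu)$ realizes the distance to $\Omega$ for $r<\eta$, so $\sigma_\Omega(\exp_q(r\nu))=q$'' is not a direct consequence of the definition: the triangle inequality only gives it for $r$ below the distance at which the pair $(q,\nu)$ was originally realized, and extending the foot point \emph{outward} up to the reach is a theorem in its own right (Federer's Theorem 4.8(12) in the Euclidean case, whose Riemannian analogue needs the same comparison machinery as your first step). Likewise the local Lipschitz bound for $\sigma_\Omega$, which you correctly identify as the principal obstacle, is only sketched; it is true and available in the literature, but ``transplanting Federer's argument through normal coordinates'' conceals exactly the geodesic comparison estimates that make the Riemannian case nontrivial. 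Given that the paper itself treats this as a black box from \cite{walterregularity}, the honest options are either to cite it as the authors do, or to carry out the cone-condition argument for $\partial(\Omega)^r$ and the graph chart for $\mathcal{N}(\Omega)$ in full; the proposal as written does neither.
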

    
    \begin{defi}
\label{defi: curvnormbundl}
Let $M$ be a Riemannian $n$-manifold, let $\Sigma$ be a $C^2$ oriented, compact, embedded $(n-1)$-submanifold of $M$ of positive reach, and let $v\in\mathcal{N}(\Sigma)$. Let us denote by
\[
\Pi: TM \to M
\]
the canonical projection such that $\Pi(p,\xi)=p$ for every $(p,\xi)\in TM$. For every $1\le j\le n-1$ we denote by $k_j(v):=k_j(\Pi(v))$ the principal curvatures of $\Sigma$ in $\Pi(v)$, and by
\[
H_j(v)=\sum_{1\le i_1<\dots<i_j\le n-1} k_{i_1}(v)\dots k_{i_j}(v)
\]
the $j$-th homogeneous symmetric form of the principal curvatures. We also denote by 
\[
H_0(v)=1.
\]
\end{defi}

\begin{defi}
        Let $M$ be a complete Riemannian $n$-manifold of constant sectional curvature $\ka$. We define the functions 
        \[
            \sn_\ka(t)=\begin{dcases}
                \frac{1}{\sqrt{-\ka}}\sinh(\sqrt{-\ka}t) &\text{if }\ka<0, \\
                t &\text{if }\ka=0 \\
                \frac{1}{\sqrt{\ka}}\sin(\sqrt{\ka}t) &\text{if }\ka>0, \\
            \end{dcases}
        \]
        and $\cn_\ka$=$\sn_\ka'$. We also let for $1\le  j\le n$
        \[
        L_j(t):=\int_0^t \cn_\ka^{n-j}(t)\sn_\ka^{j-1}(t)\,dt,
        \]
        and
        \[
        L_0(t)=1.
        \]
    \end{defi}
The following theorem is due to Kohlmann in \cite{K91}, but we also point out that Allendoerfer in \cite{A48} proves a Steiner formula for regular convex sets in spheres with different techniques.
    \begin{teor}[Steiner formula on simply connected space forms]
    \label{teor: steiner}
        Let $M$ be a simply connected space form of dimension $n$ and curvature $\ka$, and let $\Omega\subset M$ be a set of positive reach. Let $U$ be an open set in which the metric projection $\sigma_\Omega$ is well defined. For every $j=0,\dots,n$ there exist Radon measures $\Phi_j(\Omega\,;\cdot)$ on $U$ such that the following hold: if $E\subset M$ is a bounded Borel set, and $s>0$ is such that 
        \[
            \sigma_\Omega^{-1}(E)\cap\overline{(\Omega)^s}\subset U,
        \]
        then we have
        \[\Hn(\sigma_\Omega^{-1}(E)\cap\partial(\Omega)^s)=\sum_{r=0}^{n-1} \cn_\ka^{r}(s)\sn_\ka^{n-1-r}(s) \Phi_r(\Omega;E),\]
and
        \[
            \abs{\sigma_\Omega^{-1}(E)\cap(\Omega)^s}=\sum_{r=0}^n L_{n-r}(s)\Phi_{r}(\Omega;E).
        \]
        In particular, 
        \[
        \Phi_n(\Omega\,; E)=\abs{\Omega\cap E} 
        \]
        Moreover, if $\partial\Omega$ is a $C^2$ compact, embedded $(n-1)$-submanifold of $M$, then for every bounded Borel set $E\subset M$ and for every $r=0,\dots, n-1$ we have
        \begin{equation}
        \label{eq: curvaturemeasure}
        \Phi_r(\Omega; E)=\int_{\partial\Omega\cap E} H_{n-1-r}(p)\,d\Hn(p).
        \end{equation}
    \end{teor}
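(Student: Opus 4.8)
The plan is to parametrize the tube $(\Omega)^s\setminus\Omega$ by the unit normal bundle $\mathcal{N}(\Omega)$ together with the normal exponential (``tube'') map, and to \emph{define} the measures $\Phi_r(\Omega;\cdot)$ directly as pushforwards, via the base-point projection $\Pi$, of integrals over $\mathcal{N}(\Omega)$ of the elementary symmetric functions of the generalized principal curvatures. Concretely, I would set
\[
\Phi_r(\Omega;E)=\int_{\mathcal{N}(\Omega)\cap\Pi^{-1}(E)} H_{n-1-r}(v)\,d\Hn(v)\quad(0\le r\le n-1),\qquad \Phi_n(\Omega;E)=\abs{\Omega\cap E},
\]
where $H_j(v)$ is the $j$-th elementary symmetric polynomial of generalized principal curvatures $k_1(v),\dots,k_{n-1}(v)$ attached to $v\in\mathcal{N}(\Omega)$, in the sense of \autoref{defi: curvnormbundl}. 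By the regularity theorem for the normal bundle stated above, $\mathcal{N}(\Omega)$ is a compact strongly Lipschitz $(n-1)$-submanifold of $TM$, so these are finite Borel measures on $U$, and, crucially, they do not depend on $s$.

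The core computation is the Jacobian of the tube map $\Psi\colon\mathcal{N}(\Omega)\times(0,\eta)\to M$, $\Psi(v,t)=\exp_{\Pi(v)}(tv)$. I would fix a point $v$ of differentiability of $\mathcal{N}(\Omega)$ and split $d\Psi$ into the radial direction $\partial_t$, which by the Gauss lemma is a unit vector orthogonal to the image of the tangential directions (hence contributing a factor $1$ to the full Jacobian), and the $(n-1)$ tangential directions, which are propagated by normal Jacobi fields. Since $M$ has constant curvature $\ka$, such a Jacobi field $J$ solves $J''+\ka J=0$ and is therefore given explicitly by $J(t)=\cn_\ka(t)\,P_{J(0)}(t)+\sn_\ka(t)\,P_{J'(0)}(t)$ along parallel frames; with the initial data $J(0)=w$ and $J'(0)=S(v)w$ dictated by the Weingarten-type operator $S(v)$ whose eigenvalues are the $k_i(v)$, the tangential part of $d\Psi$ is represented by $\cn_\ka(t)\,\id+\sn_\ka(t)\,S(v)$. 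Hence
\[
J\Psi(v,t)=\prod_{i=1}^{n-1}\bigl(\cn_\ka(t)+k_i(v)\,\sn_\ka(t)\bigr)=\sum_{r=0}^{n-1}\cn_\ka^{\,n-1-r}(t)\,\sn_\ka^{\,r}(t)\,H_r(v),
\]
the sign convention being fixed so as to reproduce the classical expansion.

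With this in hand, both identities follow from the area formula of \autoref{teor: area} applied to $\Psi$. For the surface identity I would restrict $\Psi$ to $t=s$, use that $\restr{\Psi}{t=s}$ is injective onto $\partial(\Omega)^s$, integrate the tangential Jacobian over $\mathcal{N}(\Omega)\cap\Pi^{-1}(E)$, expand as above, and reindex $r\mapsto n-1-r$ to land on $\sum_{r}\cn_\ka^{r}(s)\sn_\ka^{n-1-r}(s)\,\Phi_r(\Omega;E)$. For the volume identity I would integrate $J\Psi(v,t)$ in $t$ over $(0,s)$ first; since
\[
\int_0^s\cn_\ka^{\,n-1-r}(t)\,\sn_\ka^{\,r}(t)\,dt=L_{r+1}(s),
\]
the tube contribution becomes $\sum_{r=0}^{n-1}L_{n-r}(s)\,\Phi_r(\Omega;E)$ after reindexing, and adding the part lying inside $\Omega$ (the $L_0(s)=1$ term $\abs{\Omega\cap E}=\Phi_n(\Omega;E)$) produces $\sum_{r=0}^{n}L_{n-r}(s)\,\Phi_r(\Omega;E)$, as claimed. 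Finally, when $\partial\Omega$ is a $C^2$ embedded hypersurface, $\Pi$ restricts to a diffeomorphism of $\mathcal{N}(\Omega)$ onto $\partial\Omega$ and the generalized curvatures $k_i(v)$ coincide with the classical principal curvatures at $\Pi(v)$, so the defining integral collapses, by the area formula, to \eqref{eq: curvaturemeasure}.

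I expect the main obstacle to be the rigorous construction of the generalized principal curvatures $k_i(v)$ and the justification of the product form of $J\Psi$ at $\Hn$-a.e.\ $v$: because $\Omega$ is only of positive reach, $\partial\Omega$ may carry edges and corners where curvatures blow up, so one cannot differentiate $\partial\Omega$ directly. Following Federer and Z\"ahle, I would read the $k_i(v)$ off the approximate tangent space of the Lipschitz manifold $\mathcal{N}(\Omega)$, using an angle parametrization to accommodate the value $+\infty$, and the positive-reach bound on $\mathcal{R}(\Omega)$ would furnish the one-sided control on the $k_i(v)$ needed both for integrability of the $H_j(v)$ and for the almost-everywhere validity of the Jacobian factorization.
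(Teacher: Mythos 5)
The paper does not actually prove this theorem: it cites Kohlmann \cite{K91} for the general statement and only verifies, in the remark that follows, that Kohlmann's definition of $\Phi_r$ reduces to \eqref{eq: curvaturemeasure} when $\partial\Omega$ is $C^2$. Your strategy --- define $\Phi_r$ on the unit normal bundle, compute the Jacobian of the tube map via Jacobi fields in constant curvature, and apply the area formula --- is exactly the Federer--Z\"ahle--Kohlmann route, so the overall plan is the right one.

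There is, however, a concrete normalization error that propagates through your displayed formulas. You define
\[
\Phi_r(\Omega;E)=\int_{\mathcal{N}(\Omega)\cap\Pi^{-1}(E)}H_{n-1-r}(v)\,d\Hn(v),
\]
integrating against the Hausdorff measure of the Lipschitz manifold $\mathcal{N}(\Omega)\subset TM$, but you pair it with the Jacobian $J\Psi(v,t)=\prod_i\bigl(\cn_\ka(t)+k_i(v)\sn_\ka(t)\bigr)$, which is the Jacobian of the tube map \emph{parametrized by} $\partial\Omega$ with its surface measure, not by $\mathcal{N}(\Omega)$. Since $J\nu_\Omega=\prod_i\sqrt{1+k_i(v)^2}$ (this is precisely the computation recorded in the paper's remark after the theorem), the correct Jacobian relative to $\Hn\restriction\mathcal{N}(\Omega)$ is $\prod_i\frac{\cn_\ka(t)+k_i(v)\sn_\ka(t)}{\sqrt{1+k_i(v)^2}}$, and correspondingly the measures must be defined with the weight $\prod_i(1+k_i(v)^2)^{-1/2}$, as in Kohlmann. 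This is not cosmetic: for a general set of positive reach the generalized curvatures take the value $+\infty$ on a set of positive $\Hn$-measure of $\mathcal{N}(\Omega)$ (edges and corners, where a single boundary point carries a whole cap of unit normals), so your unweighted integrand $H_{n-1-r}(v)$ is not integrable there, whereas each normalized factor lies in $[-1,1]$ once one passes to the angle coordinates $(a_i,b_i)$ with $k_i=b_i/a_i$. Moreover, in the $C^2$ case your unweighted definition evaluates, via the area formula, to $\int_{\partial\Omega\cap E}H_{n-1-r}(p)\prod_i\sqrt{1+k_i(p)^2}\,d\Hn(p)$ rather than to \eqref{eq: curvaturemeasure}. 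You do gesture at the angle parametrization in your final paragraph, which is the correct fix, but the definitions and the Jacobian you actually wrote are inconsistent with it and must be replaced by their normalized versions throughout.
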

    In the following, we denote by $\Phi_r(\Omega)$ the total measure, namely
    \[
        \Phi_r(\Omega)=\Phi_r(\Omega; M).
    \]
    \begin{oss}
        Notice that for sets $\Omega$ such that $\partial\Omega$ is of class $C^2$ the definition of the \emph{curvature measures} $\Phi_r$ given in \cite[Theorem 2.7]{K91} is equivalent to \eqref{eq: curvaturemeasure}. Indeed, let $\Omega$ be such that $\partial\Omega$ is a $C^2$ compact, embedded $(n-1)$-submanifold of $M$. Using the explicit definition of the measures $\Phi_r$ in \cite{K91}, we have
        \[
        \Phi_r(\Omega;E)=\int_{\mathcal{N}(\Omega)\cap \Pi^{-1}(E)}H_{n-1-r}(v)\prod_{i=1}^{n-1}\frac{1}{\sqrt{1+k_i(v)^2}}d\mathcal{H}^{n-1}(v),
        \]
        where $H_j(v)$ and $k_i(v)$ are defined in \autoref{defi: curvnormbundl}. We start by noticing that the regularity on $\partial\Omega$ ensures that  
        \[
        \restr{\nu_\Omega}{\partial\Omega} :\partial\Omega \to TM
        \]
        is a $C^1$ map, and using Area Formula (\autoref{teor: area}) with the change of variables $\nu_\Omega(p)=v$, we have
        \[
            \Phi_r(\Omega; E)=\int_{\partial\Omega\cap E}H_{n-1-r}(p)\prod_{i=1}^{n-1}\frac{1}{\sqrt{1+k_i(p)^2}} \, J\nu_\Omega\,d\Hn.
        \]
        In particular, as Kohlmann computed in \cite[Equation (2.6) for $\eps=0$]{K91} (to help the reader compare the following equation with Kohlmann's, we recall that: $j_1=\sn_\ka$ and $j_2=\cn_\ka$, while the functions $f_i$ are defined in \cite[Equation (1.20)]{K91}),
        \[
        J\nu_\Omega=\prod_{i=1}^{n-1}\sqrt{1+k_i(p)^2},
        \]
        and we have \eqref{eq: curvaturemeasure}.
        \end{oss}

\begin{oss}
\label{oss: perimeter}
    Let $g$ denote the metric on the simply connected space form $M$. In the following, we explicit the dependence on the metric. Notice that if $\Omega$ is a set of positive reach with $\partial\Omega$ strongly Lipschitz, then we have that for every open set $E$
    \begin{equation}
    \label{eq: Minkowski=hausdorff}
        \Phi_{n-1}^g(\Omega;E)=\Hn_g(\partial\Omega\cap E).
    \end{equation}
    Indeed, since the Steiner formula holds, then for every open set $E$ we have
    \[
        \Phi_{n-1}^g(\Omega;E)=\lim_{s\to 0^+}\frac{\abs{(\Omega^s\setminus \Omega)\cap E}_g}{s},
    \]
    which is the definition of (relative) Minkowski perimeter (or $(n-1)$-dimensional Minkowski content). If $M=\R^n$ equipped with the Euclidean metric, then the equality \eqref{eq: Minkowski=hausdorff} is classical (see for instance \cite[Theorem 2.106]{AFP00}). If $M$ is a generic simply connected space form, then it is possible to obtain \eqref{eq: Minkowski=hausdorff} from the Euclidean case using normal coordinates. Let $p_0\in\partial\Omega$ and let $\eps>0$, there exists a $\delta=\delta(p_0,\eps)>0$ such that we can define an exponential normal chart mapping onto $\mathcal{U}_\eps=\exp_{p_0}^{-1}(B_\delta(p_0))$ such that the metric $g$ in coordinates is given by $g_{ij}=\delta_{ij}+O(\eps)$. In particular, if $g_e$ denotes the Euclidean metric on $\R^n$, the diffeomorphism
    \[
        \id: (\mathcal{U}_\eps, g) \to (\mathcal{U}_\eps,g_e)
    \]
    is a bi-Lipschitz function with
    \[
        \Lip(\id)\le 1+\eps \qquad \Lip(\id^{-1})\le 1+\eps.
    \]
    Therefore, if we denote by $\Hn_g$ the $(n-1)$-Hausdorff measure with respect to the metric $g$, by $\abs{\cdot}_g$ the Riemannian volume, by $\Hn_e$ the $(n-1)$-Hausdorff measure with respect to the Euclidean metric $g_e$, and by $\abs{\cdot}_e$ the Lebesgue measure on $\R^n$, then we get for every Borel set $A\subset\mathcal{U}_\eps$ the following estimates (up to changing $\eps$)
    \begin{equation}    
    \label{eq: estimates}
    \begin{gathered}
        (1+\eps)^{-1}\Hn_g(A)\le \Hn_e(A)\le (1+\eps)\Hn_g(A), \\[9 pt]
        (1+\eps)^{-1}\abs{A}_g\le \abs{A}_e\le (1+\eps)\abs{A}_g, \\[9 pt]
        \Set{x\in \mathcal{U}_\eps | d_g(x,A)< (1+\eps)^{-1}}\subset \Set{x\in \mathcal{U}_\eps | d_e(x,A)< s}\subset \Set{x\in \mathcal{U}_\eps | d_g(x,A)< (1+\eps)s}.
    \end{gathered}
    \end{equation}
    Using the estimates \eqref{eq: estimates} and the fact that the equality \eqref{eq: Minkowski=hausdorff} holds on $\R^n$ for $\Hn_{e}$, then we get (up to choosing a smaller $\eps$) for every $r<\delta$,
    \begin{equation}
    \label{eq: localminkhaus}
        (1-\eps)\Hn_g(\partial\Omega\cap B_r(p_0))\le \Phi_{n-1}^g(\Omega;B_r(p_0))\le (1+\eps)\Hn_g(\partial\Omega\cap B_r(p_0)),
    \end{equation}
    where $B_r(p_0)$ denotes the Euclidean ball of radius $r$ centered in $p_0$.
    Equation \eqref{eq: localminkhaus} in particular implies that the measure $\Phi_{n-1}(\Omega;\cdot)$ is absolutely continuous with respect to $\restr{\Hn_g}{\partial\Omega}$, and that there exists a $\Hn_g$-measurable density $\rho$ such that
    \[
        \Phi_{n-1}(\Omega;E)=\int_{\partial\Omega\cap E} \rho\,d\Hn_g
    \]
    with
    \[
        1-\eps\le \rho\le 1+\eps.
    \]
    Sending $\eps$ to 0 we get $\rho=1$ and \eqref{eq: Minkowski=hausdorff}.
\end{oss}    
        
     \begin{oss} Notice that in the case $\ka = 0$ we get the Steiner polynomial
 \[
            \abs{\sigma_\Omega^{-1}(E)\cap(\Omega)^s}=\abs{\Omega\cap E}+\sum_{k=1}^{n}\frac{s^k}{k} \Phi_{n-k}(\Omega;E).
        \]
 \end{oss}

We now give a continuity property for the curvature measures, and we refer to \cite[Theorem 2.4]{K94} for the proof.
\begin{teor}
\label{teor: curvconv}
    Let $M$ be a simply connected space form of dimension $n$ and curvature $\ka$. Let $\Omega_k\subset M$ be a sequence of compact sets with non-empty boundaries. Let us assume that for some $\delta>0$ and for some compact set $\Omega\subset M$ we have
    \[
        \mathcal{R}(\Omega_k)\ge \delta \qquad \qquad \lim_k d^H(\Omega_k,\Omega)=0.
    \]
    Then for every $r=0,\dots,n$
    \[
        \Phi_r(\Omega_k;\cdot)\xrightharpoonup{\hspace{15pt}} \Phi_r(\Omega;\cdot)
    \]
    in the sense of Radon measures.
\end{teor}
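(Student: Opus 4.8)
The plan is to reduce the statement to the convergence of \emph{localized parallel volumes} and then to invert the Steiner formula. For a compact set $A$ of positive reach and $0<s<\mathcal{R}(A)$, the projection $\sigma_A$ is well defined on the open tube $(A)^s$, and I consider the Radon measure on $M$ obtained by pushing the volume on the tube forward through $\sigma_A$, that is
\[
\mu_A^s(E)=\abs{\sigma_A^{-1}(E)\cap (A)^s}.
\]
By the volume part of the Steiner formula in \autoref{teor: steiner},
\[
\mu_A^s=\sum_{r=0}^n L_{n-r}(s)\,\Phi_r(A;\cdot),
\]
so it suffices to prove: (1) for each fixed $s$ in a suitable range one has $\mu_{\Omega_k}^s\rightharpoonup \mu_\Omega^s$ as Radon measures; and (2) the linear system above can be inverted by sampling $n+1$ values of $s$.

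Before step (1), I record that the reach bound passes to the limit: since $\mathcal{R}(\Omega_k)\ge\delta$ and $d^H(\Omega_k,\Omega)\to 0$, one has $\mathcal{R}(\Omega)\ge\delta$ as well, because a uniform lower bound on the reach encodes a uniform quadratic separation of near-nearest points (Federer, \cite{F59}) that is preserved under Hausdorff limits. In particular $\sigma_\Omega$ is well defined and continuous on the open tube, and the measures $\mu_\Omega^s$ are defined for every $s<\delta$.

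For step (1) I test against $f\in C_c(M)$ and apply dominated convergence to
\[
\int_M f\,d\mu_{\Omega_k}^s=\int_{(\Omega_k)^s} f\bigl(\sigma_{\Omega_k}(x)\bigr)\,d\mu(x).
\]
Hausdorff convergence gives $d(x,\Omega_k)\to d(x,\Omega)$ for every $x$, hence $\mathbf{1}_{(\Omega_k)^s}\to \mathbf{1}_{(\Omega)^s}$ pointwise off the level set $\{d(\cdot,\Omega)=s\}$, which is $\mu$-negligible because $\Omega$ has positive reach. Moreover, for every $x$ in the open tube the points $\sigma_{\Omega_k}(x)$ lie in a fixed compact set and every subsequential limit is a nearest point of $x$ on $\Omega$; uniqueness (from $\mathcal{R}(\Omega)\ge\delta$) then forces $\sigma_{\Omega_k}(x)\to\sigma_\Omega(x)$, and continuity of $f$ gives a.e.\ convergence of the integrands. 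Since all the tubes are contained, for $k$ large, in a fixed compact neighbourhood $K$ of $\Omega$, the integrands are dominated by $\norma{f}_\infty\mathbf{1}_K$, and the limit passes through.

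For step (2), the expansions $L_0\equiv 1$ and $L_j(s)=s^j/j+o(s^j)$ as $s\to 0^+$ show that $L_0,\dots,L_n$ are linearly independent functions, so there exist $n+1$ distinct nodes $s_0,\dots,s_n\in(0,\delta)$ at which the matrix $A=\bigl(L_{n-r}(s_i)\bigr)_{i,r}$ is invertible. Solving the Steiner system node by node gives
\[
\Phi_r(\Omega_k;\cdot)=\sum_{i=0}^n (A^{-1})_{ri}\,\mu_{\Omega_k}^{s_i},
\]
and the analogous identity for $\Omega$; since each $\mu_{\Omega_k}^{s_i}\rightharpoonup\mu_\Omega^{s_i}$ by step (1) and the coefficients are fixed scalars, linearity of weak convergence yields $\Phi_r(\Omega_k;\cdot)\rightharpoonup\Phi_r(\Omega;\cdot)$ for all $r$. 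I expect the genuine difficulty to lie in step (1) — the a.e.\ convergence of the metric projections and the stability of the reach lower bound — as these are the only points where the geometry of positive-reach sets, rather than soft measure theory, is essential; the inversion in step (2) is routine once the $L_j$ are seen to be independent.
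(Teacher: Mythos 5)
Your argument is correct. The paper does not prove this statement itself --- it defers to \cite[Theorem 2.4]{K94} --- and your proof reconstructs essentially the classical Federer-style argument behind that reference: weak convergence of the localized parallel volumes $\mu^{s}_{\Omega_k}$ under a uniform reach bound, followed by inversion of the Steiner system of \autoref{teor: steiner} at $n+1$ nodes, which is legitimate because the expansions $L_j(s)=s^j/j+o(s^j)$ make $L_0,\dots,L_n$ linearly independent on $(0,\delta)$. The one genuinely geometric ingredient, namely that $\mathcal{R}(\Omega)\ge\delta$ survives the Hausdorff limit so that $\sigma_\Omega$ is single-valued on the tube and the level set $\{d(\cdot,\Omega)=s\}$ is negligible, is correctly isolated by you and does hold in space forms (Federer's characterization of reach via quadratic separation transfers to constant curvature, cf.\ \cite{K94}).
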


Finally, we state an Alexandrov-Fenchel inequality on the sphere comparing the curvature measure $\Phi_{n-2}$ with $\Phi_{n-1}$, and we refer to \cite[Theorem 1.5]{MS16} for the proof of the regular case. 
\begin{teor}
\label{teor: smoothalexfench}
    Let $\Omega\subset\Sp^n$ be a closed strongly convex set with $C^2$ boundary. Then 
    \[
        \left(\frac{\Phi_{n-2}(\Omega)}{(n-1)\sigma_n}\right)^2\ge \left(\frac{\Phi_{n-1}(\Omega)}{\sigma_n}\right)^{\frac{2(n-2)}{n-1}}-\left(\frac{\Phi_{n-1}(\Omega)}{\sigma_n}\right)^2,
    \]
    and the equality holds if and only if $\Omega$ is a geodesic ball. 
\end{teor}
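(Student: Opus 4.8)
The plan is to prove this first Alexandrov--Fenchel inequality by running an inverse curvature flow on $\partial\Omega$, following the strategy that yields sharp curvature inequalities in space forms together with their rigidity. By \autoref{teor: steiner} (in the $C^2$ case, through \eqref{eq: curvaturemeasure}) the two quantities involved are geometric: $\Phi_{n-1}(\Omega)=\Hn(\partial\Omega)$ is the perimeter and $\Phi_{n-2}(\Omega)=\int_{\partial\Omega}H_1\,d\Hn$ is the total mean curvature, so the statement compares total mean curvature with area. As a first step I would verify the equality case directly on geodesic balls: for a geodesic sphere of radius $\rho$ all principal curvatures equal $\cot\rho$, so that $\Phi_{n-1}/\sigma_n=\sin^{n-1}\rho$ and $\Phi_{n-2}/((n-1)\sigma_n)=\cos\rho\,\sin^{n-2}\rho$, and substituting shows that both sides of the claimed inequality reduce to $\cos^2\rho\,\sin^{2(n-2)}\rho$. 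This fixes the normalization and singles out geodesic balls as the conjectural extremizers.

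For the inequality itself I would flow the hypersurface $\Sigma_0=\partial\Omega$ by an inverse curvature flow $\partial_t x=\tfrac{1}{H}\nu$, with $\nu$ the outer unit normal and $H$ the mean curvature. Since the hypothesis only gives a convex (possibly not strictly convex) $C^2$ boundary, I would first reduce to the strictly convex case---either by approximation as in \autoref{cor: approx} or by observing that the flow becomes instantaneously strictly convex---so that $H>0$, the flow is parabolic, and short-time existence holds. The two decisive structural facts are then: that strict convexity is preserved, which follows from the evolution equation of the second fundamental form and the tensor maximum principle, with the positive ambient curvature of $\Sp^n$ entering favourably; and that the area element evolves by $\partial_t(d\Hn)=d\Hn$, so that $\Hn(\Sigma_t)=\Hn(\Sigma_0)\,e^{t}$.

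Granting this, the heart of the argument is a monotonicity. Differentiating $\int_{\Sigma_t}H\,d\Hn$ along the flow, integrating by parts, and using the Gauss equation to rewrite the intrinsic curvature terms as extrinsic ones, one obtains a differential identity which, combined with the inequality $\bigl(\int_{\Sigma_t}H\,d\Hn\bigr)^2\le \Hn(\Sigma_t)\int_{\Sigma_t}H^2\,d\Hn$, shows that the functional built from $\Phi_{n-1}(\Sigma_t)$ and $\Phi_{n-2}(\Sigma_t)$ in the statement is monotone in $t$. I would then establish long-time existence and smooth convergence of the flow to the equator, so that the monotone functional tends to the value it takes on geodesic spheres, where equality holds; transporting the monotonicity back to $t=0$ yields the inequality for $\Omega$. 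For rigidity, equality at $t=0$ forces the functional to be constant in $t$, hence pointwise equality at every time in the curvature inequality driving the monotonicity; this forces all principal curvatures to coincide, and since a closed totally umbilic hypersurface of $\Sp^n$ is a geodesic sphere, $\Omega$ must be a geodesic ball.

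The step I expect to be the main obstacle is the global analysis of the flow rather than the monotonicity computation: one must prove uniform curvature and gradient estimates guaranteeing that $\Sigma_t$ remains smooth and strictly convex for all time, and control its asymptotics as it approaches the equator. Unlike in $\R^n$, the sphere carries no Minkowski addition, so the classical mixed-volume proof of the Minkowski inequality does not transfer; and the positive ambient curvature, while it helps to preserve convexity, also drives the flow toward the equator in a way that makes the a priori estimates delicate. This analytic core is exactly the content of \cite[Theorem 1.5]{MS16}, on which we rely.
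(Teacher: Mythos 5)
The paper offers no proof of this statement beyond the citation of \cite[Theorem 1.5]{MS16}, and your proposal ultimately rests on that same citation, so the two take essentially the same route. Your added material --- the (correct) verification that geodesic balls of radius $\rho$ give equality, with $\Phi_{n-1}/\sigma_n=\sin^{n-1}\rho$ and $\Phi_{n-2}/((n-1)\sigma_n)=\cos\rho\,\sin^{n-2}\rho$, and the sketch of the inverse-curvature-flow monotonicity argument --- accurately outlines how that reference proceeds, but it is exposition of the cited result rather than an independent proof.
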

\begin{cor}
\label{cor: alexfench}
    Let $\Omega\subset\Sp^n$ be a closed strongly convex set. Then 
    \[
        \left(\frac{\Phi_{n-2}(\Omega)}{(n-1)\sigma_n}\right)^2\ge \left(\frac{\Phi_{n-1}(\Omega)}{\sigma_n}\right)^{\frac{2(n-2)}{n-1}}-\left(\frac{\Phi_{n-1}(\Omega)}{\sigma_n}\right)^2.
    \]
\end{cor}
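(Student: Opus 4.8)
The plan is to derive the general inequality from its smooth counterpart in \autoref{teor: smoothalexfench} by a convex approximation argument, using the continuity of the curvature measures. I first reduce to the case $\mathring{\Omega}\neq\emptyset$, which is the one relevant for the applications: if $\Omega$ has empty interior the argument below can instead be run on the outer parallel sets $(\Omega)^t$, which are strongly convex with non-empty interior, letting $t\to 0^+$ and invoking \autoref{teor: curvconv}. So assume $\mathring{\Omega}\neq\emptyset$.

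By \autoref{cor: approx}, I would pick a sequence $\Omega_k$ of closed strongly convex sets with $C^\infty$ boundaries such that $d^H(\Omega_k,\Omega)\to 0$. Each $\Omega_k$, being strongly convex, is connected, compact, and locally convex, so \autoref{prop: lowboundreach}\,(ii) applied with $\ka=1$ gives the uniform lower bound $\mathcal{R}(\Omega_k)\ge \pi/2$. Hence \autoref{teor: curvconv}, applied with $\delta=\pi/2$, yields that $\Phi_r(\Omega_k;\cdot)$ converges weakly to $\Phi_r(\Omega;\cdot)$ as Radon measures, for every $r=0,\dots,n$.

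The next step is to upgrade this weak convergence to convergence of the total masses $\Phi_r(\Omega_k)=\Phi_r(\Omega_k;M)$. By \autoref{oss: convexinhemi} the set $\Omega$ lies in an open hemisphere, and by Hausdorff convergence all the $\Omega_k$ eventually lie in a fixed compact set $K$ contained in a slightly larger hemisphere. Choosing a continuous, compactly supported $\phi$ with $\phi\equiv 1$ on a neighbourhood of $K$ and testing the weak convergence against $\phi$ gives $\Phi_r(\Omega_k)\to\Phi_r(\Omega)$ for each $r$. I then apply \autoref{teor: smoothalexfench} to each smooth set $\Omega_k$ and pass to the limit, using that both sides of the inequality are continuous functions of $\Phi_{n-2}(\Omega_k)$ and $\Phi_{n-1}(\Omega_k)$, to obtain the claimed inequality for $\Omega$. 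Note that, unlike in the smooth statement, no equality discussion is required here.

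I expect the only delicate point to be precisely this passage from weak convergence of the curvature measures to convergence of the total masses $\Phi_{n-2}$ and $\Phi_{n-1}$: it hinges on the uniform containment of the approximating sets in a single compact subset of a hemisphere, which is exactly what allows the constant test function $1$ to be replaced by a genuinely compactly supported one against which weak convergence may be used.
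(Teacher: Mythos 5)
Your proposal is correct and follows essentially the same route as the paper: approximate by smooth strongly convex sets via \autoref{cor: approx}, use the uniform reach bound from \autoref{prop: lowboundreach} to invoke \autoref{teor: curvconv}, and pass to the limit in the smooth inequality of \autoref{teor: smoothalexfench}. The only difference is that you spell out the passage from weak convergence to convergence of total masses (which is in fact automatic here since $\Sp^n$ is compact, so the constant function $1$ is an admissible test function) and you treat the empty-interior case, which the paper leaves implicit.
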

\begin{proof}
    If $\partial\Omega$ is of class $C^2$, then the result follows from \autoref{teor: smoothalexfench}.
    
    For the general case, let $\Omega$ be a closed strongly convex set. By \autoref{cor: approx}, we can find closed strongly convex sets $\Omega_k$ with smooth boundaries such that
    \[
        \lim_k d^H(\Omega_k,\Omega)=0.
    \]
    In particular, we have
    \begin{equation}
        \label{eq: approxalexfench}
          \left(\frac{\Phi_{n-2}(\Omega_k)}{(n-1)\sigma_n}\right)^2\ge \left(\frac{\Phi_{n-1}(\Omega_k)}{\sigma_n}\right)^{\frac{2(n-2)}{n-1}}-\left(\frac{\Phi_{n-1}(\Omega_k)}{\sigma_n}\right)^2.
    \end{equation}
    Since $\Omega_k$ are strongly convex, we have by \autoref{prop: lowboundreach}
    \[
        \mathcal{R}(\Omega_k)\ge \frac{\pi}{2}.
    \]
    Therefore, we can apply \autoref{teor: curvconv}, and passing to the limit in \eqref{eq: approxalexfench}, the assertion follows.
\end{proof}
\subsection{Isoperimetric inequality}
\begin{defi}[Minkowski Perimiter]
Let $\Omega\subset\Sp^n$ be a Borel set. We define the \emph{lower Minkowski content} as
\[
\Mink_-(\Omega):=\liminf_{s\to0^+}\dfrac{\abs{\Omega^s}-\abs{\Omega}}{s}.
\]
\end{defi}
We now state the isoperimetric inequality on spheres in terms of Minkowski content. The original proof of this result is due to Schmidt in \cite{S43} (see also \cite[Theorem 3.15, Theorem 1.52, Theorem 5.18]{R23}).
\begin{teor}   
 Let $\Omega\subset \Sp^n$ be a measurable set, and let $B\subset \Sp^n$ be a geodesic ball having the same measure as $\Omega$. Then,
 \[
 \Mink_-(\partial B)\le \Mink_-(\partial \Omega),
 \]
 and the equality holds if and only if $\Omega$ is a geodesic ball. 
\end{teor}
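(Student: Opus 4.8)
The statement is the classical isoperimetric inequality on $\Sp^n$ (Schmidt, \cite{S43}), here phrased through Minkowski content; the plan is to deduce it from a single monotonicity statement for metric neighborhoods, so that the $\liminf$ defining $\Mink_-$ never causes trouble. The central claim is that for every measurable $A\subset\Sp^n$, if $B_A$ denotes the geodesic ball with $\abs{B_A}=\abs{A}$, then
\[
\abs{B_A^{\,s}}\le\abs{A^s}\qquad\text{for every }s>0.
\]
I would apply this both to $\Omega$ (with ball $B$) and to its complement $\Sp^n\setminus\Omega$ (whose equal-volume ball is $\Sp^n\setminus B$, again a geodesic ball). Writing the boundary tube as $(\partial\Omega)^s=\Omega^s\setminus(\Omega)_s$ and using the duality $(\Omega)_s=\Sp^n\setminus(\Sp^n\setminus\Omega)^s$ between erosion and dilation, one gets $\abs{(\partial\Omega)^s}=\abs{\Omega^s}+\abs{(\Sp^n\setminus\Omega)^s}-\abs{\Sp^n}$, and the two instances of the central claim give $\abs{(\partial B)^s}\le\abs{(\partial\Omega)^s}$ for every $s>0$. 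Dividing by $s$ and taking $\liminf_{s\to0^+}$ (the left-hand side converges, since $B$ is smooth) yields $\Mink_-(\partial B)\le\Mink_-(\partial\Omega)$.

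To establish the central neighborhood inequality I would use cap symmetrization about a fixed pole $N$. The set $\Sp^n\setminus\{N,-N\}$ is foliated by the geodesic spheres $\{d(\cdot,N)=\theta\}$, $\theta\in(0,\pi)$, each isometric to a round sphere of radius $\sin\theta$; replacing every slice $A\cap\{d(\cdot,N)=\theta\}$ by the geodesic cap of equal $\Hnn$-measure centered on the axis produces a rotationally symmetric set $A^\star$. Applying the coarea formula (\autoref{teor: coarea}) to $d(\cdot,N)$ shows that symmetrization preserves volume, $\abs{A^\star}=\abs{A}$. The decisive property is the neighborhood monotonicity $(A^\star)^s\subseteq(A^s)^\star$, which gives $\abs{(A^\star)^s}\le\abs{(A^s)^\star}=\abs{A^s}$; this is verified slice by slice along the foliation, using that on each round sphere a geodesic cap minimizes the measure of its $s$-neighborhood among subsets of prescribed measure and that centering all caps on a common axis can only shorten distances between neighbouring slices.

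It then remains to treat rotationally symmetric sets. Such a set is described by its cap-radius profile $\theta\mapsto r(\theta)$, and $\abs{A^\star}$ and $\abs{(A^\star)^s}$ become one-dimensional integrals against the weight $\sin^{n-1}\theta$; minimizing $\abs{(A^\star)^s}$ at fixed volume is then a one-dimensional variational problem whose unique minimizer is the profile of a single geodesic ball, giving $\abs{B_A^{\,s}}\le\abs{(A^\star)^s}\le\abs{A^s}$. For the equality case I would run the two reductions backwards: equality in $\Mink_-$ forces equality in $\abs{(A^\star)^s}\le\abs{A^s}$ along a sequence $s\to0^+$ for $A=\Omega$, the slice-wise rigidity of the cap forces each nonempty slice to be a cap centered on the axis, and the one-dimensional rigidity forces the profile to be that of a ball; since the pole $N$ is arbitrary, this pins down $\Omega$ as a geodesic ball up to a null set. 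The main obstacle is precisely the geometric heart of the argument — the neighborhood monotonicity $(A^\star)^s\subseteq(A^s)^\star$ together with its sharp rigidity, to be established for merely measurable sets with no boundary regularity available — which is the delicate geometric point treated in Schmidt's original work and in the references \cite{S43,R23}, and which I would invoke rather than reprove.
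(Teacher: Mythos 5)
The paper does not actually prove this statement: it is quoted as Schmidt's classical spherical isoperimetric inequality, with the proof delegated entirely to \cite{S43} and \cite{R23}. Measured against that, your reduction step is a genuinely nice piece of content the paper does not spell out: passing to the ``neighborhood form'' $\abs{B_A^{\,s}}\le\abs{A^s}$, applying it to both $\Omega$ and $\Sp^n\setminus\Omega$, and recombining via $\abs{(\partial\Omega)^s}=\abs{\Omega^s}+\abs{(\Sp^n\setminus\Omega)^s}-\abs{\Sp^n}$ is a correct and standard way to get the Minkowski-content comparison (modulo the usual caveats for merely measurable sets, e.g.\ that $\abs{\partial\Omega}$ need not vanish, which affects the normalization in the paper's definition of $\Mink_-$). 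However, as a proof the proposal is circular in the same place the paper is: the neighborhood inequality with its rigidity \emph{is} the content of Schmidt's theorem, and you explicitly invoke it from the same references rather than proving it, so what you have is a verification that the Minkowski-content formulation follows from the neighborhood formulation, not an independent proof.

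There is also a concrete gap in your equality case. Equality of the two lower Minkowski contents does \emph{not} force equality in $\abs{(\partial B)^s}\le\abs{(\partial\Omega)^s}$ along any sequence $s_k\to0^+$; it only forces
\[
\abs{(\partial\Omega)^{s_k}}-\abs{(\partial B)^{s_k}}=o(s_k)
\]
along a sequence realizing the $\liminf$. Asymptotic equality of tube volumes at scale $o(s)$ is much weaker than exact equality at some fixed $s$, and the slice-wise rigidity of cap symmetrization (each slice an exact cap, profile that of a ball) cannot be triggered by it as you describe. Turning this asymptotic information into the rigidity statement is precisely the delicate part of the equality analysis in the classical proofs, and it needs a separate argument (or a quantitative form of the neighborhood inequality); as written, this step fails.
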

In particular, \autoref{oss: perimeter} ensures the following corollary for strongly convex sets of the sphere.

\begin{cor}
\label{teor: isop}
    Let $\Omega\subset \Sp^n$ be an open set such that $\bar\Omega$ is strongly convex. Let $B\subset \Sp^n$ be a geodesic ball having the same measure as $\Omega$, then
     \[
         \Hn(\partial B)\le \Hn(\partial \Omega),
     \]
 and the equality holds if and only if $\Omega$ is a geodesic ball.
\end{cor}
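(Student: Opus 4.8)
The plan is to reduce the statement to the isoperimetric inequality for the lower Minkowski content stated just above, by establishing that for every closed strongly convex set $K\subset\Sp^n$ with $\mathring K\ne\emptyset$ one has
\[
\Mink_-(\partial K)=2\,\Hn(\partial K).
\]
Since $\overline\Omega$ is strongly convex, it is contained in an open hemisphere (\autoref{oss: convexinhemi}), so $\abs{\Omega}$ is strictly smaller than the volume of a hemisphere; hence the geodesic ball $B$ of the same volume has radius smaller than $\pi/2$ and is itself strongly convex. Thus the displayed identity applies verbatim to both $\Omega$ and $B$, and substituting it into the inequality $\Mink_-(\partial B)\le\Mink_-(\partial\Omega)$ gives $\Hn(\partial B)\le\Hn(\partial\Omega)$. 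As the factor $2$ also cancels in the equality discussion, equality holds exactly when it holds in the Minkowski-content version, i.e.\ if and only if $\Omega$ is a geodesic ball.

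To prove the identity I would split the tube around the boundary into its outer and inner halves. Observing that for a point outside the closed set $K$ the distance to $K$ coincides with the distance to $\partial K$, one checks that, up to sets of measure zero, the tube $(\partial K)^s=\Set{p | d(p,\partial K)\le s}$ decomposes as the disjoint union of $(K)^s\setminus K$ and $K\setminus (K)_s$, so that
\[
\abs{(\partial K)^s}=\abs{(K)^s\setminus K}+\abs{K\setminus (K)_s}.
\]
For the outer piece I invoke \autoref{oss: perimeter}: a closed strongly convex set has positive reach (\autoref{prop: lowboundreach}) and strongly Lipschitz boundary (\autoref{teor: regularity}), whence
\[
\lim_{s\to0^+}\frac{\abs{(K)^s\setminus K}}{s}=\Phi_{n-1}(K)=\Hn(\partial K).
\]

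For the inner piece, the distance function $\rho(p)=d(p,\partial K)$ is $1$-Lipschitz on $K$ with $\abs{\nabla\rho}=1$ almost everywhere, and its superlevel set $\Set{\rho\ge t}$ is precisely the inner parallel set $(K)_t$, which is strongly convex by \autoref{cor: convexpar}. Hence the coarea formula (\autoref{teor: coarea}), applied to $\rho$, yields
\[
\abs{K\setminus (K)_s}=\int_0^s\Hn\bigl(\partial (K)_t\bigr)\,dt,
\]
so that $\tfrac1s\abs{K\setminus (K)_s}\to\Hn(\partial K)$ as soon as $t\mapsto\Hn(\partial (K)_t)$ is continuous at $t=0^+$. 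This continuity is the step I expect to be the main obstacle, and I would obtain it from \autoref{teor: curvconv}: the sets $(K)_t$ converge to $K$ in the Hausdorff distance as $t\to0^+$ and, being strongly convex, enjoy the uniform reach bound $\mathcal{R}((K)_t)\ge\pi/2$ (\autoref{prop: lowboundreach}); therefore $\Phi_{n-1}((K)_t;\cdot)\rightharpoonup\Phi_{n-1}(K;\cdot)$, and testing against a cutoff identically equal to $1$ on a fixed compact hemisphere containing all the $(K)_t$ gives $\Hn(\partial (K)_t)=\Phi_{n-1}((K)_t)\to\Phi_{n-1}(K)=\Hn(\partial K)$. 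Adding the two limits produces $\Mink_-(\partial K)=2\,\Hn(\partial K)$ and closes the argument.
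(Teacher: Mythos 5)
Your proof is correct, and it is genuinely more detailed than the paper's, which derives the corollary in a single sentence from \autoref{oss: perimeter}. The difference lies in how the Minkowski content is read. The paper implicitly identifies $\Mink_-(\partial\Omega)$ with the \emph{outer} content $\lim_{s\to0^+}\abs{(\Omega)^s\setminus\Omega}/s$, which \autoref{oss: perimeter} equates with $\Phi_{n-1}(\Omega)=\Hn(\partial\Omega)$, and the inequality then follows at once from the Minkowski-content isoperimetric theorem. You instead take the stated definition of $\Mink_-$ literally when applied to the Borel set $\partial K$, which produces the full two-sided tube $(\partial K)^s$, so you must also show that the inner half $K\setminus (K)_s$ contributes another $\Hn(\partial K)$, arriving at $\Mink_-(\partial K)=2\Hn(\partial K)$ with the factor $2$ cancelling on both sides. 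This is arguably the more faithful reading of the statement as printed, and your treatment of the inner half --- coarea for the distance function together with continuity of $t\mapsto\Phi_{n-1}((K)_t)$ at $t=0^+$ via \autoref{teor: curvconv} and the uniform reach bound of \autoref{prop: lowboundreach} --- is sound; the two points you leave implicit (that $\set{p\in K : d(p,\partial K)=t}$ coincides with $\partial (K)_t$ up to a null set, with $\abs{\nabla d(\cdot,\partial K)}=1$ a.e., and that $d^H((K)_t,K)\to0$, which follows from concavity of the distance function by sliding points towards an incenter) both hold for convex bodies. Note that for the inequality alone the continuity step is dispensable: monotonicity of $t\mapsto P((K)_t)$ (\autoref{cor: monotonicity}) already bounds the inner contribution for $\Omega$ from above by $\Hn(\partial\Omega)$, while for the ball $B$ everything is explicit; your full identity is, however, what makes the equality discussion go through cleanly. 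Your preliminary check that $B$ is itself strongly convex (radius strictly less than $\pi/2$, since $\overline{\Omega}$ sits in an open hemisphere) is also a point the paper leaves unsaid but which is needed to apply the identity to $B$.
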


\section{Proof of the main theorem}
\label{proof1}
In this section, for strongly convex sets $\Omega\subset\Sp^n$, we denote by $P(\Omega)=\Hn(\partial\Omega)$.

Let $\Omega\subset \Sp^n$ be an open set with strongly Lipschitz boundary, then the variational characterization \eqref{robin} is well posed and the minimizers are weak solutions of \eqref{robinproblem}. 
\begin{oss}\label{radiality}
    Let $D$ be a geodesic ball of center $q$ and radius $R>0$ in the sphere $\Sp^n$. We recall that the eigenfunctions relative to the first eigenvalue $\lambda_\beta(D)$ are all proportional. Therefore, by the rotational symmetry of $D$ and the rotational invariance of the equation \eqref{robinproblem}, we have that all the first eigenfunctions on $D$ are radial. Precisely, $u(p)=\psi(d(p,q))$ for some function $\psi$ solution to the one-dimensional problem
\[\begin{cases}
    \psi''+(n-1)\cot(r)\psi'+\lambda_\beta(D)\psi=0 &r\in(0,R),\\[3 pt]
    \psi'(0)=0,\\[3 pt]
    \psi'(R)+\beta\psi(R)=0.
\end{cases}\] 
Moreover letting $\phi(\rho)=\psi(R-\rho)$ we can write $u$ as a function of the distance from the boundary of the ball $D$, indeed for every $p\in D$
\[d(p,\partial D)=R-d(p,q),\]
so that $u(p)=\phi(d(p,\partial D))$.
\end{oss}

For every $\Omega\subset\Sp^n$, we denote by $R_\Omega$ its inradius, that is 
\[
R_\Omega=\max_{p\in\Omega}d(p,\partial\Omega).
\]
We have the following
\begin{lemma}\label{lemma:estonderper}
Let $\Omega\subset\Sp^n$ be a closed, strongly convex set, and let $\Omega_t=(\Omega)_t$. Then for almost every $t\in(0,R_{\Omega})$ the function $P(\Omega_t)$ is differentiable and
\begin{equation}
\label{odeper}-\dfrac{d}{dt}P(\Omega_t)\ge (n-1)\left(\sigma_n^{\frac{2}{n-1}} P(\Omega_t)^{\frac{2(n-2)}{n-1}}-P(\Omega_t)^2\right)^{\frac{1}{2}}.
\end{equation}
\end{lemma}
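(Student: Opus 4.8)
The plan is to reduce \eqref{odeper} to two independent ingredients: an \emph{exact} differential identity identifying $-\frac{d}{dt}P(\Omega_t)$ with the curvature measure $\Phi_{n-2}(\Omega_t)$, and the Alexandrov--Fenchel inequality of \autoref{cor: alexfench}, which bounds $\Phi_{n-2}$ from below in terms of $\Phi_{n-1}$. First I would record the structural facts. By \autoref{cor: convexpar} each inner parallel $\Omega_t=(\Omega)_t$ with $t>0$ is closed and strongly convex, so by \autoref{prop: lowboundreach} it has reach $\mathcal{R}(\Omega_t)\ge \pi/2$, and by \autoref{oss: perimeter} its perimeter agrees with the top curvature measure, $P(\Omega_t)=\Phi_{n-1}(\Omega_t)$. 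Since $t\mapsto\Omega_t$ is decreasing for inclusion, \autoref{cor: monotonicity} shows that $t\mapsto P(\Omega_t)$ is non-increasing on $(0,R_\Omega)$, hence differentiable at almost every such $t$; this is exactly the set of $t$ on which I will establish \eqref{odeper}.

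Next comes the differential identity. For fixed $t$ and small $s>0$ the outer parallel of the inner parallel satisfies $(\Omega_t)^s=\Omega_{t-s}$; for strongly convex sets this morphological identity follows from the convexity of the (signed) distance function in \autoref{teor: convexdist} together with the parallel-set analysis of \autoref{cor: convexpar}. Applying the Steiner formula \autoref{teor: steiner} to $\Omega_t$ with $E=M$ and curvature $\kappa=1$ (so that $\sn_1=\sin$ and $\cn_1=\cos$), and using $\mathcal{R}(\Omega_t)\ge \pi/2$ to guarantee its validity, gives
\[
P(\Omega_{t-s})=\Hn(\partial(\Omega_t)^s)=\sum_{r=0}^{n-1}\cos^{r}(s)\,\sin^{n-1-r}(s)\,\Phi_r(\Omega_t).
\]
Expanding as $s\to0^+$ via $\sin s=s+O(s^3)$ and $\cos s=1+O(s^2)$, the term $r=n-1$ contributes $P(\Omega_t)$ at order $0$ (with an $O(s^2)$ correction), the term $r=n-2$ contributes $s\,\Phi_{n-2}(\Omega_t)$, and every term with $r\le n-3$ carries a factor $\sin^{n-1-r}(s)=O(s^2)$; hence
\[
P(\Omega_{t-s})=P(\Omega_t)+s\,\Phi_{n-2}(\Omega_t)+O(s^2).
\]
Dividing by $-s$ and letting $s\to0^+$ identifies the one-sided derivative, so that at every point of differentiability $-\frac{d}{dt}P(\Omega_t)=\Phi_{n-2}(\Omega_t)$.

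Finally I would insert the isoperimetric estimate. Applying \autoref{cor: alexfench} to the strongly convex set $\Omega_t$ and substituting $\Phi_{n-1}(\Omega_t)=P(\Omega_t)$, then multiplying through by $(n-1)^2\sigma_n^2$ and simplifying the exponent $2-\frac{2(n-2)}{n-1}=\frac{2}{n-1}$, one obtains
\[
\Phi_{n-2}(\Omega_t)^2\ge (n-1)^2\Bigl(\sigma_n^{\frac{2}{n-1}}P(\Omega_t)^{\frac{2(n-2)}{n-1}}-P(\Omega_t)^2\Bigr).
\]
Since the right-hand side is non-negative (being dominated by the square on the left), taking square roots and combining with the differential identity $-\frac{d}{dt}P(\Omega_t)=\Phi_{n-2}(\Omega_t)$ yields precisely \eqref{odeper}.

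The step I expect to be the main obstacle is the differential identity: one must justify the morphological equality $(\Omega_t)^s=\Omega_{t-s}$ for strongly convex sets on $\Sp^n$, verify that the Steiner formula applies uniformly for the possibly non-smooth sets $\Omega_t$ (which it does, thanks to the bound $\mathcal{R}(\Omega_t)\ge\pi/2$), and check that the one-sided Steiner expansion actually computes the genuine derivative of $P(\Omega_t)$ at almost every $t$ rather than a mere one-sided bound. Everything else is routine bookkeeping with the space-form functions $\sn_\kappa,\cn_\kappa$ and the elementary algebra converting \autoref{cor: alexfench} into the stated form.
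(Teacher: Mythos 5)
Your overall architecture (reduce \eqref{odeper} to a differential relation between $P(\Omega_t)$ and $\Phi_{n-2}(\Omega_t)$, then invoke \autoref{cor: alexfench}) matches the paper's, but the step you yourself flagged as the main obstacle is where the argument actually breaks: the morphological identity $(\Omega_t)^s=\Omega_{t-s}$ is \emph{false} in general, even for strongly convex sets; only the inclusion $(\Omega_t)^s\subseteq\Omega_{t-s}$ holds. Already in the Euclidean model case of a square $Q$ of side $2a$, the inner parallel $Q_t$ is the square of side $2(a-t)$, while $(Q_t)^s$ is that smaller square with its corners rounded off by arcs of radius $s$, strictly contained in the square $Q_{t-s}$; geodesic polygons on $\Sp^n$ exhibit the same corner-rounding. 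Consequently your exact identity $-\frac{d}{dt}P(\Omega_t)=\Phi_{n-2}(\Omega_t)$ is not justified: the left difference quotient of $t\mapsto P(\Omega_t)$ need not be computed by the Steiner expansion of the outer parallels of $\Omega_t$.

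The repair is exactly the paper's route, and it only costs you the equality, which you never needed since \eqref{odeper} is a one-sided bound. Both $(\Omega_t)^s$ and $\Omega_{t-s}$ are strongly convex by \autoref{cor: convexpar} and nested, so \autoref{cor: monotonicity} gives $P((\Omega_t)^s)\le P(\Omega_{t-s})$; hence at every point $t$ of differentiability of the monotone function $t\mapsto P(\Omega_t)$,
\[
-\frac{d}{dt}P(\Omega_t)=\lim_{s\to0^+}\frac{P(\Omega_{t-s})-P(\Omega_t)}{s}\ge\lim_{s\to0^+}\frac{P((\Omega_t)^s)-P(\Omega_t)}{s}=\Phi_{n-2}(\Omega_t),
\]
the last equality being your (correct) Steiner expansion of $s\mapsto \Hn(\partial(\Omega_t)^s)$, legitimate because $\mathcal{R}(\Omega_t)\ge\pi/2$. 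The rest of your argument --- substituting $\Phi_{n-1}(\Omega_t)=P(\Omega_t)$ via \autoref{oss: perimeter} and applying \autoref{cor: alexfench}, using that $\Phi_{n-2}(\Omega_t)\ge 0$ to take square roots --- is fine and coincides with the paper's conclusion.
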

\begin{proof}
From the strong convexity of $\Omega$, by \autoref{cor: convexpar} we have that the for every $R_\Omega>s>t>0$ the inner parallel sets $\Omega_t$ and $\Omega_s$ are strongly convex, so that \autoref{cor: monotonicity} ensures that
\[
P(\Omega_s)\le P(\Omega_t).
\]
In particular, the function $t\mapsto P(\Omega_t)$ is monotonic decreasing and hence it is differentiable almost everywhere. Fix $t\in(0,R_\Omega)$, for every $s>0$ sufficiently small, by \autoref{cor: convexpar}, we have that the sets $(\Omega_t)^s$ are strongly convex. Moreover, since by definition
\[
(\Omega_t)^s\subseteq\Omega_{t-s},
\]
and both are strongly convex, we can apply \autoref{cor: monotonicity} again, so that 
\[
P((\Omega_t)^s)\le P(\Omega_{t-s}).
\] 
In particular, we get for almost every $t\in(0,R_\Omega)$
\[
-\dfrac{d}{dt}P(\Omega_t)=\lim_{s\to0^+}\dfrac{P(\Omega_{t-s})-P(\Omega_t)}{s}\ge\lim_{s\to0^+}\dfrac{P((\Omega_{t})^s)-P(\Omega_t)}{s}=\restr{\dfrac{d}{ds}P((\Omega_t)^s)}{s=0}.
\]
By the Steiner formula (\autoref{teor: steiner}), we have
\[
\restr{\dfrac{d}{ds}P((\Omega_t)^s)}{s=0}=\Phi_{n-2}(\Omega_t).
\]
Hence, \eqref{odeper} follows from the Alexandrov-Fenchel inequality \autoref{cor: alexfench}.
\end{proof}

In order to prove \autoref{teorema1} we need a comparison result that relates $P((\Omega)_t)$ and $P((B)_t)$. 
\begin{lemma}
\label{lem: comparison}
    Let $f:[a,b]\to \R$ be a monotone decreasing function, and let $g:[a,b]\to \R$ be an absolutely continuous function. Assume that there exists a Lipschitz function $F:\R\to\R$ such that
    \[
        \begin{dcases}
            f(a)\le g(a),\\
            f'(t)\le F(f(t)) &\text{for a.e. }t\in(a,b), \\
            g'(t) = F(g(t))  &\text{for a.e. }t\in(a,b),
        \end{dcases}
    \]
    then $f(t)\le g(t)$ for every $t\in[a,b]$.
\end{lemma}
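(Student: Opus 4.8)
The plan is to set $w=f-g$ and to prove that $w\le 0$ on $[a,b]$ by showing that its positive part $w^+=\max\{w,0\}$ obeys a Grönwall-type integral inequality. Denote by $L$ the Lipschitz constant of $F$. The two structural facts I would isolate first are the following. Since $f$ is monotone decreasing, for every $a\le s\le t\le b$ its increment is controlled by the integral of its almost-everywhere derivative, namely
\[
f(t)-f(s)\le \int_s^t f'(r)\,dr\le \int_s^t F(f(r))\,dr;
\]
while, since $g$ is absolutely continuous,
\[
g(t)-g(s)=\int_s^t g'(r)\,dr=\int_s^t F(g(r))\,dr.
\]
Subtracting, I obtain for all $a\le s\le t\le b$ the integral inequality
\[
w(t)\le w(s)+\int_s^t\bigl(F(f(r))-F(g(r))\bigr)\,dr.
\]

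Next I would fix $t$ with $w(t)>0$ (otherwise there is nothing to prove at that point) and localize this inequality to the last instant at which $w$ vanishes before $t$. Set $\tau=\sup\{s\in[a,t]:w(s)\le 0\}$, which is well defined because $w(a)=f(a)-g(a)\le 0$. The key observation is that the only discontinuities of $w$ come from the decreasing function $f$, as $g$ is continuous, so $w$ can only jump downward; combining this with $w>0$ on $(\tau,t]$ and the defining property of $\tau$, one checks that $w$ reaches the value $0$ at $\tau$, in the sense that the relevant one-sided limit is $0$. Applying the integral inequality on $(\tau,t]$, where $w>0$ and therefore $F(f(r))-F(g(r))\le L\bigl(f(r)-g(r)\bigr)=L\,w^+(r)$ by the Lipschitz bound, yields
\[
w^+(t)=w(t)\le L\int_\tau^t w^+(r)\,dr\le L\int_a^t w^+(r)\,dr,
\]
and this estimate trivially holds also when $w(t)\le 0$.

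Finally I would close the argument with Grönwall's lemma: since $w$ is bounded, $w^+$ is integrable and $t\mapsto\int_a^t w^+$ is absolutely continuous with derivative dominated by $L\int_a^t w^+$; as this primitive vanishes at $a$, it is identically zero, hence $w^+\equiv 0$ and $f\le g$ on $[a,b]$. The main obstacle is precisely that $f$ is only monotone, not absolutely continuous, and that $F$ need not be monotone, so a naive Grönwall estimate based on $\abs{w}$ fails: a negative excursion of $w$ would spuriously inflate the right-hand side. Localizing to the last zero $\tau$ of $w$ is what circumvents this, since there the Lipschitz bound can be read against $w^+$ rather than $\abs{w}$; the one technical point to handle with care is the behaviour of $w$ at $\tau$, which is controlled by the downward-jump structure inherited from the monotonicity of $f$.
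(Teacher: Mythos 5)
Your proposal is correct and follows essentially the same route as the paper's proof: the one-sided integral inequality for the monotone $f$, the exact integral identity for the absolutely continuous $g$, localization at the last zero $\tau$ of $w=f-g$ using the fact that $w$ can only jump downward, and a Gr\"onwall closure. The only cosmetic difference is that you run a direct global Gr\"onwall estimate on $w^+$ instead of the paper's contradiction argument on $[\tau,t_0]$, and the needed fact at $\tau$ is just $w(\tau)\le 0$ (which your downward-jump observation delivers), so the slight imprecision in "the relevant one-sided limit is $0$" is harmless.
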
 
\begin{proof}
    First we recall that since $f$ is decreasing then for every $t,s\in(a,b]$ such that $t<s$, (see for instance \cite[Corollary 3.29]{AFP00})
    \begin{equation}
    \label{eq: fder}
        f(s)-f(t)\le f(s^-)-f(t^+)\le \int_t^s f'(\rho)\,d\rho\le\int_t^s F(f(\rho))\,d\rho,
    \end{equation}
    where we used the notation $f(s^\pm)=\lim_{\eps\to0^\pm}f(s+\eps)$. On the other hand, for $g$ we have the equality
    \begin{equation}
    \label{eq: gder}
        g(s)-g(t)= \int_t^s g'(\rho)\,d\rho=\int_t^s F(g(\rho))\, d\rho.
    \end{equation}
    Subtracting \eqref{eq: gder} to \eqref{eq: fder}, and letting $w(t)=f(t)-g(t)$, then
    \begin{equation}
    \label{eq: wder}
    \begin{split}
        w(s)-w(t)&\le \int_t^s \left(F(f(\rho))-F(g(\rho))\right)\, dt \\[7 pt]
        &\le L\int_t^s\abs{w(\rho)}\,d\rho,
    \end{split}
    \end{equation}
    where $L$ is the Lipschitz constant of $F$. We also notice that by the monotonicity of $f$ and the continuity of $g$ we have
    \begin{equation}
    \label{eq: limitinequalities1}
        w(s)\le w(s^-)  \qquad\qquad \forall s\in(a,b],
    \end{equation}
    \begin{equation}
    \label{eq: limitinequalities2}
        w(s^+)\le w(s) \qquad \qquad \forall s\in[a,b).
    \end{equation}
    By contradiction, let us assume that for some $t_0\in(a,b]$ we have $w(t_0)>0$. Then \eqref{eq: limitinequalities1} ensures that for a suitable $\delta>0$ and for every $s\in(t_0-\delta,t_0]$ we have $w(s)>0$. Let 
    \[
        \tau = \sup \set{t \in[a,t_0) | w(t)\le 0},
    \]
    so that $a\le \tau\le t_0-\delta$, and 
    \begin{equation}
    \label{eq: wspos}
        w(s)>0 \qquad \forall s\in (\tau,t_0].
    \end{equation}
   By definition of $\tau$, we have
    \begin{equation}
    \label{eq: wpos}
    w(\tau^+)\ge 0.
    \end{equation}
    We claim that $w(\tau)=0$. Indeed, if $\tau=a$, then the initial condition yields
    \[
        w(a)\le 0,
    \]
    and by \eqref{eq: wpos}, joint with \eqref{eq: limitinequalities2}, we get $0\le w(a^+)\le w(a)\le 0$. If $\tau>a$, then, by definition of $\tau$,
    \[
       w(\tau^-)\le 0. 
    \]
    Using  \eqref{eq: wpos} joint with \eqref{eq: limitinequalities1} and \eqref{eq: limitinequalities2}, we also have $0\le w(\tau^+)\le w(\tau^-)\le 0$, and the claim is proved. 
    
    Therefore, since $w(\tau)=0$, \eqref{eq: wder} reads as follows: for every $s\in(\tau,t_0)$ we have
    \[
        w(s)\le L\int_\tau^s w(\rho)\,d\rho.
    \]
    By the integral form of the Gronwall inequality (see for instance \cite[Lemma 3.2]{HH17}, which is a particular case of \cite[Theorem 3.1]{H99})
    , we get $w\le 0$ in $[\tau,t_0]$, which is in contradiction with \eqref{eq: wspos}. 
\end{proof}

We are now able to prove the main theorem.

\begin{proof}[Proof of \autoref{teorema1}]
   Let $D$ be a strongly convex geodesic ball such that 
\[
P(\Omega)=P(D),
\] 
and let $R$ be its radius. The isoperimetric inequality (\autoref{teor: isop}) and the fact that both $D$ and $\Omega$ are contained in a hemisphere, ensure that $\abs{\Omega}\le \abs{D}$. Since $R_\Omega$ is the radius of the biggest ball contained in $\Omega$, we also obtain $R_\Omega\le R$, and the equality holds if and only if $\Omega$ is a ball of radius $R_\Omega$.

For every $t\in(0,R_\Omega)$, let 
\[
\Omega_t=(\Omega)_t, \qquad \text{and} \qquad D_t=(D)_t
\]
be the inner parallel sets of $\Omega$ and $D$ respectively. Then from \autoref{lemma:estonderper} we have that
\[
\dfrac{d}{dt}P(\Omega_t)\le- (n-1)\left(\sigma_n^{\frac{2}{n-1}} P(\Omega_t)^{\frac{2(n-2)}{n-1}}-P(\Omega_t)^2\right)^{\frac{1}{2}},
\]
while, by direct computation, the same estimate holds for the perimeter of $D_t$  with the equality sign 
\[
\dfrac{d}{dt}P(D_t)=- (n-1)\left(\sigma_n^{\frac{2}{n-1}} P(D_t)^{\frac{2(n-2)}{n-1}}-P(D_t)^2\right)^{\frac{1}{2}}.
\]
The comparison lemma (\autoref{lem: comparison}) ensures that for every $t\in(0,R_\Omega)$
\begin{equation}
\label{eq: isoplevels}
 P(\Omega_t)\le P(D_t).
\end{equation}

Let $u$ be an eigenfunction on $D$ and let $\phi\colon[0,R]\to\R$ be as in \autoref{radiality}, then for every $p\in D$ 
 \[u(p)=\phi(d(p,\partial D)).\]
For every $p\in\Omega$, let us define
\[v(p)=\phi(d(p,\partial\Omega)),\]
so that $v\in H^1(\Omega)$, and 
\[\lambda_\beta(\Omega)\le \dfrac{\displaystyle\int_\Omega\abs{\nabla v}^2\,d\mu+\beta\int_{\partial\Omega} v^2\,d\Hn}{\displaystyle\int_\Omega v^2\,d\mu}.\] 
By direct computation, we have that
\[\int_{\partial\Omega} v^2\,d\Hn=\phi^2(0)P(\Omega)=\int_{\partial D} u^2\,d\Hn.\]
While, using coarea formula (\autoref{teor: coarea}) with $f(p)=d(p,\partial\Omega)$ and \eqref{eq: isoplevels}, we have 
\begin{equation}
\label{eq: L2comp}
\int_\Omega v^2\,d\mu=\int_0^{R_\Omega} \phi^2(t)P(\Omega_t)\,dt\le\int_0^{R_\Omega} \phi^2(t)P(D_t)\,dt\le \int_D u^2\,d\mu,
\end{equation}
and
\[\int_\Omega \abs{\nabla v}^2\,d\mu=\int_0^{R_\Omega} (\phi'(t))^2 P(\Omega_t)\,dt\le\int_0^{R_\Omega} (\phi'(t))^2 P(D_t)\,dt\le \int_D \abs{\nabla u}^2\,d\mu.\]
Then 
\[\int_\Omega \abs{\nabla v}^2\,d\mu+\beta\int_{\partial\Omega} v^2\,d\Hn\le\int_D \abs{\nabla u}^2+\beta\int_{\partial D} u^2\,d\Hn<0.\]
Hence,
\[\lambda_\beta(\Omega)\le \dfrac{\displaystyle\int_\Omega\abs{\nabla v}^2\,d\mu+\beta\int_{\partial\Omega} v^2\,d\Hn}{\displaystyle\int_\Omega v^2\,d\mu}\le \dfrac{\displaystyle\int_D\abs{\nabla u}^2\,d\mu+\beta\int_{\partial D} u^2\,d\Hn}{\displaystyle\int_D u^2\,d\mu}=\lambda_\beta(D).\]

Finally, if the equality $\lambda_\beta(\Omega)=\lambda_\beta(D)$ holds, then the equality in \eqref{eq: L2comp} gives that $R_\Omega=R$, which implies that $\Omega$ is a geodesic ball of radius $R_\Omega$.
\end{proof}
Following the approach of \cite{AGM22} we now prove \autoref{teorema2}.
\begin{proof}[Proof of \autoref{teorema2}]
    As in the proof of \autoref{teorema1}, let $\phi\colon[0,R]\to\R$ be such that $u(p)=\phi(d(p,\partial D))$ and let $v(p)=\phi(d(p,\partial\Omega))$. In order to obtain \eqref{quant} we can better estimate the $L^2$-norm of the test $v$. Indeed, 
\[\begin{split}\int_\Omega v^2\,d\mu=&\int_0^{R_\Omega}\phi^2(t)P(\Omega_t)\,dt
\le\int_0^{R_\Omega}\phi^2(t)P(D_t)\,dt\\[15 pt]
=&\int_0^R\phi^2(t)P(D_t)\,dt-\int_{R_\Omega}^R \phi^2(t)P(D_t)\,dt
\le\int_{D}u^2\,d\mu-u_m^2\int_{R_\Omega}^R P(D_t)\,dt\\[15 pt]
\le&\int_\Omega u^2\,d\mu-u_m^2(\abs{D}-\abs{\Omega})
=\int_\Omega u^2\,d\mu\left(1-\dfrac{u_m^2}{\norma{u}_{L^2(D)}^2}(\abs{D}-\abs{\Omega})\right),
\end{split}\]
where we have used that, by definition of inradius, for a suitable ball $B_{R_\Omega}$ of radius $R_\Omega$ we have $B_{R_\Omega}\subseteq\Omega$. Therefore, computations analogous to the ones done in \autoref{teorema1} lead to
\[\begin{split}\lambda_\beta(\Omega)&\le \dfrac{\displaystyle\int_\Omega\abs{\nabla v}^2\,d\mu+\beta\int_{\partial\Omega} v^2\,d\Hn}{\displaystyle\int_\Omega v^2\,d\mu}\\[15 pt]&\le \dfrac{\displaystyle\int_D\abs{\nabla u}^2\,d\mu+\beta\int_{\partial D} u^2\,d\Hn}{\displaystyle\int_D u^2\,d\mu\left(1-\dfrac{u_m^2}{\norma{u}_{L^2(D)}^2}(\abs{D}-\abs{\Omega})\right)}\\[15 pt]&=\lambda_\beta(D)\left(1-\dfrac{u_m^2}{\norma{u}_{L^2(D)}^2}(\abs{D}-\abs{\Omega})\right)^{-1}.\end{split}\]
So that, reordering the terms, \eqref{quant} is proved.
\end{proof}
\section{Further remarks}
\label{remarks}
In this section, we show that the same arguments used to prove \autoref{teorema1} cannot be used for strongly convex sets in the hyperbolic setting. Even though it is possible to generalize the Alexandrov-Fenchel inequalities to the hyperbolic space under strict convexity assumptions (see for instance \cite[Theorem 1.1]{WX14}), the main difficulty here is to extend \autoref{cor: convexpar} to the hyperbolic space $\mathbb{H}^n$. In particular, we can construct convex sets for which the inner parallel sets are not convex. To show an example, let us fix some notation. Let
\[
\norma{x}_e=\sqrt{\scalar{x}{x}}
\]
be the Euclidean norm, and let $\mathbb{H}^n$ be represented in the Poincaré half-space model:
\begin{gather*}
    \mathbb{H}^n=\Set{(\hat{x},x_{n})\in\R^{n} | x_{n}>0}, \\[7 pt]
    g_x(v,w)=\frac{\scalar{v}{w}}{x_{n}^2}, \\[7 pt]
    d(x,y)=2\arcsinh\left(\frac{\norma{x-y}_e}{2\sqrt{x_{n}y_{n}}}\right).
\end{gather*}
We also recall the shape of the geodesics in $\mathbb{H}^n$: let $p,q\in\mathbb{H}^n$, if $\hat{p}=\hat{q}$, then the geodesic $\gamma_{pq}$ connecting the two is the vertical line passing through $p$ and $q$; if $\hat{p}\ne \hat{q}$, then the geodesic $\gamma_{pq}$ connecting $p$ and $q$ is the unique circular arc touching orthogonally the plane $\{x_n=0\}$. We could describe the circular arc (non-parametrized by arc length) as
\begin{equation}
\begin{aligned}
\label{eq: geodesic}
\gamma_{pq}: [t_p,t_q] &\longrightarrow \mathbb{H}^n \\
                 t &\longmapsto x_0+R\left(t\hat{w},\sqrt{1-t^2}\right),
\end{aligned}
\end{equation}
where $x_0=(\hat{x}_0,0)$ is the center of the circular arc, $R=\norma{x_0-p}_e$ is the radius of the arc,
\[
\hat{w}=\frac{\hat{p}-\hat{q}}{\norma{\hat{p}-\hat{q}}_e},
\]
and
\begin{gather*}
[t_p,t_q]\subset (-1,1), \\
\gamma_{pq}(t_p)=p \qquad \qquad \gamma_{pq}(t_q)=q.
\end{gather*}

We now divide the construction into three simple steps.

\noindent
\textbf{Step 1:}
Consider the cylinder
\[
C=\Set{(\hat{x},x_n)\in\R^n | \begin{gathered}
    \norma{\hat{x}}_e\le 1
\end{gathered}}.
\]
C is convex:

consider $p,q\in C$, and let $\gamma_{pq}$ be the geodesic connecting the points. If $\hat{p}=\hat{q}$, then we can represent the geodesic (non-parametrized by arc length) as 
\[
\gamma_{pq}(t)=(\hat{p},t),
\]
and obviously $\gamma_{pq}(t)\in C$ for every $t$. If $\hat{p}\ne\hat{q}$, then for the geodesic $\gamma_{pq}=(\hat{\gamma}_{pq},\gamma_{pq}^n)$ we have that $\hat{\gamma}_{pq}([t_p,t_q])$ is the segment joining $\hat{p}$ and $\hat{q}$ in $\R^{n-1}$, so that
\[
\norma{\hat{\gamma}_{pq}(t)}_e\le \max\{\norma{\hat{p}}_e,\norma{\hat{q}}_e\}\le 1,
\]
for every $t\in[t_p,t_q]$.

\noindent
\textbf{Step 2.} For any fixed vertical line $r(\hat{x}_0)=\set{(\hat{x},x_n) |\hat{x}=\hat{x}_0}$, the level sets of the distance from $r(\hat{x}_0)$ are cones: 

it is sufficient to notice that for every point $(\hat{x}_0,x_n)$ the geodesics orthogonal to $r(\hat{x}_0)$ in $(\hat{x}_0,x_n)$ are all contained in the hemisphere of radius $x_n$ centered in $x_0=(\hat{x}_0,0)$, so that, with a direct computation,
\begin{equation*}
(r(\hat{x}_0))^t=\Set{x\in\R^n | d(x,r_0)\le t}=\Set{(\hat{x},x_n)\in\R^n | \norma{\hat{x}-\hat{x}_0}_e\le \sinh(t)\, x_n}.
\end{equation*}

\noindent

\noindent
\textbf{Step 3:} the inner parallel sets $(C)_\delta$ are not convex for every choice of $\delta>0$.

Indeed, notice that
\[
(C)_\delta=C\cap \bigcap_{\norma{\hat{x}_0}_e=1}\overline{\mathbb{H}^n\setminus(r(\hat{x}_0))^\delta}=\Set{(\hat{x},x_n)\in\R^n | \begin{gathered}
    x_n>0, \\
    \norma{\hat{x}}_e\le 1-\sinh(\delta)\,x_n.
\end{gathered}}
\]
If for instance we take $q\in (C)_\delta$ such that $\norma{\hat{q}}_e=1-\sinh(\delta)q_n$ and $p=(0,1/\sinh(\delta)))$, then the minimal geodesic $\gamma_{pq}=(\hat{\gamma},\gamma^n)$ connecting $p$ and $q$ lies outside the cone $(C)_\delta$: we can write, for $t\in[t_p,t_q]$,
\[
\gamma(t)=x_0+R\left(t\hat{w},\sqrt{1-t^2}\right)
\]
as defined in \eqref{eq: geodesic}; notice that by concavity
\begin{equation}
\label{eq: gamman}
\begin{split}
\gamma^n(t)&>\gamma^n(t_p)+\frac{t-t_p}{t_q-t_p}(\gamma^n(t_q)-\gamma^n(t_p))  \\[7 pt]
&=p_n+\frac{t-t_p}{t_q-t_p}(q_n-p_n)\qquad\qquad  \forall t\in(t_p,t_q),
\end{split}
\end{equation}
and that, since $\hat{p}=0$, then $\hat{w}$ and $\hat{x}_0$ are proportional, then $\norma{\hat{\gamma}(t)}_e$ is linear in $t$, so that it is of the form
\begin{equation}
\label{eq: gammahat}
\norma{\hat{\gamma}_{pq}(t)}_e=\norma{\hat{p}}_e+\frac{t-t_q}{t_p-t_q}\norma{\hat{q}}_e.
\end{equation}
Using the fact that $p,q\in\partial (C)_\delta$, and in particular \eqref{eq: gamman} and \eqref{eq: gammahat}, which implies that the geodesic is not contained in $(C)_\delta$.

\subsubsection*{Acknowledgements} 
The five authors are members of Gruppo Nazionale per l’Analisi Matematica, la Probabilità e le loro Applicazioni
(GNAMPA) of Istituto Nazionale di Alta Matematica (INdAM). 

 The author Cristina Trombetti has been supported by the Project MiUR PRIN-PNRR 2022: "Linear and Nonlinear PDE’S: New directions and Applications", P2022YFAJH

The authors Paolo Acampora, Emanuele Cristoforoni and Carlo Nitsch were partially supported by the project GNAMPA 2023: "Symmetry and asymmetry in PDEs", CUP\_E53C22001930001. 

The authors Paolo Acampora and Emanuele Cristoforoni were partially supported by the 2024 project GNAMPA 2024: "Modelli PDE-ODE nonlineari e proprieta' di PDE su domini standard e non-standard", CUP\_E53C23001670001. 

The author Antonio Celentano was partially supported by the INdAM-GNAMPA project 2023 "Modelli matematici di EDP per fluidi e strutture e proprieta' geometriche delle soluzioni di EDP", cod. CUP\_E53C22001930001, and by the INdAM-GNAMPA project 2024 "Problemi frazionari: proprietà quantitative ottimali, simmetria, regolarità", cod. CUP\_E53C23001670001.

We would like to thank Marco Pozzetta for his suggestions and fruitful discussions about the topic.
 \newpage 
 
\printbibliography[heading=bibintoc]
\Addresses
\end{document}